\newlist{clist}{enumerate}{1}
\setlist*[clist]{label=(\roman*),nosep}
\let\@fnsymbol\@arabic
\theoremstyle{definition}
\newtheorem{Def}{Definition}[section]
\newtheorem{Rem}[Def]{Remark}
\newtheorem{Que}[Def]{Question}
\theoremstyle{plain}
\newtheorem{Thm}[Def]{Theorem}
\newtheorem{Prop}[Def]{Proposition}
\newtheorem{Lem}[Def]{Lemma}
\newtheorem{Cor}[Def]{Corollary}
\newtheorem*{thm}{Theorem}
\newcommand{\quotient}[2]{
\mathchoice{  \text{\raise1ex\hbox{$#1$}\!\Big/\!\lower1ex\hbox{$#2$}} }
                  {  \text{\raise1pt\hbox{$#1$}\big/\lower1pt\hbox{$#2$}} }
                  {  {#1}\,/\,{#2}  }
                  {  {#1}\,/\,{#2}  }
}
\title{$B$-valued semi-circular system and the free Poincar\'{e} inequality}
\author{Hyuga Ito}
\address{
Graduate School of Mathematics, Nagoya University, Furocho, Chikusaku, Nagoya, 464-8602, Japan
}
\email{hyuga.ito.e6@math.nagoya-u.ac.jp}
\date{\today}
\begin{document}

\begin{abstract}
    We characterize $B$-valued semi-circular systems in terms of a $B$-valued free probabilistic analogue of Poincar\'{e} inequality. This is a $B$-valued generalization of Biane's theorem \cite[Theorem 5.1]{b03}. Moreover, we prove that Voiculescu's conjecture on $B$-valued free Poincar\'{e} inequality in \cite{aim06} is not in the affirmative as it is.
\end{abstract}
\maketitle

\allowdisplaybreaks{
\section{Introduction}
Free probability theory is a non-commutative probability theory based on the notion of free independence, initiated by Voiculescu \cite{v85} in the early 80s. In his work, Voiculescu proved a free probabilistic analogue of central limit theorem (see \cite[Theoem 4.8]{v85}). In the theorem, the \textit{semi-circular distribution}
$
d\sigma_{S}(t)
=
\frac{1}{2\pi}
1_{[-2,2]}(t)
\sqrt{4-t^2}\,dt
$ (with mean $0$ and variance $1$)
appears as the limit distribution, where $dt$ is the Lebesgue measure and $1_{[-2,2]}$ is the indicator function on $[-2,2]$. This implies that the semi-circular distribution plays a role of Gaussian distribution in free probability theory. Then, the semi-circular distribution has been studied by many hands. For example, Voiculescu proved a free probabilistic analogue of Stein's equation (see \cite[Proposition 3.8]{v98}) and Biane characterized the semi-circular distribution by means of (sharp) $\mathbb{C}$-valued free Poincar\'{e} inequality (see \cite[Theorem 5.1]{b03}). In his unpublished note, Voiculescu proved the $\mathbb{C}$-valued free Poincar\'{e} inequality 
\[
\|P(Y_{1},\dots,Y_{d})-\tau\left(P(Y_{1},\dots,Y_{d})\right)\|_{\tau}
\leq \sqrt{2}\max_{j\in[d]}\{
|\|Y_{j}\|\}\cdot\sum_{j=1}^{d}\|\partial_{j}P(Y_{1},\dots,Y_{d})\|_{\tau\otimes\tau},
\]
for any $P\in\mathbb{C}\langle X_{1},\dots,X_{d}\rangle$ and any non-commutative tuples $Y_{1},\dots,Y_{d}$ of self-adjoint elements in a von Neumann algebra $M$ with a faithful normal tracial state $\tau$, where $\mathbb{C}\langle X_{1},\dots,X_{d}\rangle$ is the unital $*$-algebra of all $\mathbb{C}$-valued non-commutative polynomials in $X_{j}^*=X_{j}$, $j\in\{1,2,\dots,n\}$ (see e.g. \cite[section 8.1, Theorem 5]{misp17}). In the above inequality, a non-commutative replacement $\partial_{j}$ of differential operator called the \textit{free difference quotient} appears, and it was introduced in the study of (non-microstate) free entropy and free Fisher's information measure (see \cite[section 2]{v98}). Voiculescu also proposed a conjecture on a $B$-valued analogue of free Poincar\'{e} inequality in \cite{aim06}, which means that $\mathbb{C}$-valued non-commutative polynomials are replaced with $B$-valued non-commutative polynomials. Related to this conjecture, the author \cite{i24} proved a ``weak'' free Poincar\'{e} inequality, in which the $L^2$-norm $\|\partial_{X:B}[P(X)]\|_{\tau\otimes\tau}$ was replaced with the projective tensor norm $\|\partial_{X:B}[P(X)]\|_{\widehat{\otimes}}$, to improve Voiculescu's lemma \cite[Lemma 3.4]{v00} about the kernel of the closure of free difference quotient. It is still an open problem whether or not $\mathrm{ker}(\overline{\partial}_{X:B})=L^2(B,\tau)$, where $\overline{\partial}_{X:B}$ denotes the $L^2$-closure of $\partial_{X:B}:L^2(B\langle X\rangle,\tau)\to L^2(B\langle X\rangle\otimes B\langle X\rangle,\tau\otimes\tau)$ (see \cite{aim06}).

In the mid 80s, Voiculescu \cite{v95} introduced the notion of \textit{$B$-free independence}, which arises by the construction of free product with amalgamation over $B$, where $B$ is a unital algebra over $\mathbb{C}$. In his work, Voiculescu proved a $B$-valued free probabilistic analogue of central limit theorem and called the limit distribution a \textit{$B$-valued semi-circular distribution} (see \cite[Definition 8.1 and Theorem 8.4]{v95}). Then, Speicher \cite{sp98} reconstructed the framework of $B$-valued free probability theory in terms of partitions and also defined the notion of $B$-valued semi-circular elements in terms of non-crossing partitions (see Definition \ref{Def_Bsemicircular}).

In this paper, we will generalize Biane's theorem \cite[Theorem 5.1]{b03} to the $B$-valued semi-circular distribution, that is, we will characterize the $B$-valued semi-circular system in terms of an ``appropriate'' $B$-valued free Poincar\'{e} inequality. The main result of this paper is the following:

\begin{thm}
    Let $(S_{1},\dots,S_{d})$ be a $B$-free family of self-adjoint non-commutative random variables in a tracial $B$-valued non-commutative probability space $(A,B,\tau,E)$ (see Definition \ref{Def_nonprobspace} below) with mean $0$ and variance $\eta=(\eta_{1},\dots,\eta_{d})$, respectively. Then, the following are equivalent:
    \begin{enumerate}
        \item $(S_{1},\dots,S_{d})$ is a $B$-valued semi-circular system associated with $\eta_{1},\dots,\eta_{d}$ (see Definition \ref{Def_Bsemicircular} below).
        \item We have
        \[
        \left\|
        P(S_{1},\dots,S_{d})
        -
        E\left[
        P(S_{1},\dots,S_{d})
        \right]
        \right\|_{\tau}^2
        \leq
        \sum_{j=1}^{d}
        \left\|
        \mathrm{ev}_{S}^{\otimes2}
        \left(
        \partial_{j}
        \left[
        P(X_{1},\dots,X_{d})
        \right]
        \right)
        \right\|_{\eta_{j}}^2
        \]
        for any non-commutative polynomial $P(X_{1},\dots,X_{d})$ of a certain class $\mathcal{T}_{\eta}B_{\langle d\rangle}$, which consists of all elements such that they admit expressions of ``particular'' linear combinations of finite products of $B$-valued Chebyshev polynomials (see section \ref{section_characterization} for the precise definition).
        \item We have
        \[
        \|P(S_{1},\dots,S_{d})-E[P(S_{1},\dots,S_{d})]\|_{\tau}^2\leq\sum_{j=1}^{d}\|\mathrm{ev}_{S}^{\otimes2}(\partial_{j}[P(X_{1},\dots,X_{d})])\|_{\eta_{j}}^2
        \]
        for any $P\in B_{\langle d\rangle}$.
    \end{enumerate}
\end{thm}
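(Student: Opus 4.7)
I would prove the chain $(1) \Rightarrow (3) \Rightarrow (2) \Rightarrow (1)$; the implication $(3) \Rightarrow (2)$ is immediate because $\mathcal{T}_{\eta}B_{\langle d\rangle} \subseteq B_{\langle d\rangle}$, so the real content lies in the other two directions. The central tool throughout is the family of $B$-valued Chebyshev polynomials adapted to the covariance $\eta$, together with their explicit behavior under the free difference quotients $\partial_{j}$.

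\textbf{The direction $(1) \Rightarrow (3)$.} For a $B$-semi-circular system, the $B$-valued Chebyshev polynomials evaluated at $(S_{1},\dots,S_{d})$ produce an orthogonal decomposition of $L^{2}(B\langle S\rangle, \tau)$ with respect to the conditional expectation $E$; this is the $B$-valued extension of the classical fact that $\{U_{n}\}$ diagonalizes the scalar semi-circular law. Expanding $P(S) - E[P(S)]$ in this basis, the left-hand side decomposes as a sum of squared $\eta$-norms of the components. Next I would compute the action of $\partial_{j}$ on each Chebyshev basis element; the result mirrors $\partial U_{n}(X) = \sum_{k+l=n-1} U_{k}(X) \otimes U_{l}(X)$ but with $\eta_{j}$-twisted coefficients arising from non-crossing pair partitions. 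Substituting the two computations term-by-term shows that the right-hand side of (3) majorizes the left-hand side, with equality precisely on the Chebyshev class $\mathcal{T}_{\eta}B_{\langle d\rangle}$ (this is why (2) is the natural intermediate statement).

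\textbf{The direction $(2) \Rightarrow (1)$.} By hypothesis the variance of $S_{j}$ is $\eta_{j}$, so the second-order $B$-free cumulants of $(S_{1},\dots,S_{d})$ already coincide with those of a $B$-semi-circular system with covariance $\eta$. By Speicher's cumulant characterization it then suffices to show that all higher $B$-free cumulants vanish. I would proceed by induction on degree, testing (2) on carefully chosen elements of $\mathcal{T}_{\eta}B_{\langle d\rangle}$ that isolate a single higher cumulant at each stage. Crucially, the explicit formula for $\partial_{j}$ on Chebyshev polynomials derived in the first step is purely algebraic and does not presuppose that $S$ is semi-circular, so it can be used to evaluate the right-hand side. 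Comparing with the left-hand side (which differs from the equality case by a non-negative quadratic form in the higher-order cumulants) forces that quadratic form to vanish, hence the cumulants themselves to vanish.

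\textbf{Main obstacle.} The hardest point is the selection of test polynomials in $(2) \Rightarrow (1)$ and the book-keeping of the $\eta$-twisted $B$-valued combinatorics. Unlike the scalar setting, both $\partial_{j}$ and the $\eta_{j}$-norm carry $B$-bimodule structure, so one must carefully track left/right actions of $B$ and ensure that the evaluation map $\mathrm{ev}_{S}^{\otimes 2}$ interacts correctly with the Chebyshev basis while $(S_{1},\dots,S_{d})$ is not yet known to be semi-circular. This requires extending the scalar Chebyshev recursion to the $B$-valued situation in a way compatible with non-crossing pair partitions, and verifying that the class $\mathcal{T}_{\eta}B_{\langle d\rangle}$ is rich enough to separate all higher cumulants via inductively accessible test polynomials.
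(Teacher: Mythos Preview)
Your direct route $(1)\Rightarrow(3)$ is in fact cleaner than the paper's: the paper first proves $(1)\Rightarrow(2)$ only for the restricted class $\mathcal{T}_{\eta}B_{\langle d\rangle}$, and then recovers $(3)$ from $(2)$ via a matrix-amplification trick (embedding an arbitrary $P\in B_{\langle d\rangle}$ as the corner of some $\widetilde P\in \mathcal{T}_{\eta\otimes\mathrm{id}_N}(M_N(B))_{\langle d\rangle}$ and applying $(1)\Leftrightarrow(2)$ at the $M_N(B)$ level). Your observation that the orthogonality of Chebyshev products lets one handle all of $B_{\langle d\rangle}$ at once is correct---if one groups terms by their $(k,\ell,n,i)$-type, the same computation gives $\sum_j\|\partial_jP\|_{\eta_j}^2-\|P\|_\tau^2=\sum_T(k_T-1)\|P_T\|_\tau^2\geq 0$. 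However, your sketch omits the mechanism that makes this work: the paper does \emph{not} compare term-by-term, but introduces a divergence operator $\partial_j^*$, proves an integration-by-parts identity $\langle\partial_j^*\Xi,\xi\rangle_\tau=\langle\Xi,\partial_j\xi\rangle_{\eta_j}$ (which requires the standing hypothesis $\tau(\eta_j(b)b')=\tau(b\eta_j(b'))$, unmentioned in your outline), and shows that $\sum_j\partial_j^*\partial_j$ acts as the number operator on Chebyshev products. Also, your parenthetical that equality holds precisely on $\mathcal{T}_\eta B_{\langle d\rangle}$ is wrong: equality holds only for polynomials of total degree $\leq 1$.

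Your plan for $(2)\Rightarrow(1)$ is where the real gap lies. The paper does \emph{not} argue via a ``non-negative quadratic form in the higher cumulants''; it proves inductively that $E[U_{n(1)}^{\eta_{i(1)}}(\cdots)(S_{i(1)})\cdots U_{n(k)}^{\eta_{i(k)}}(\cdots)(S_{i(k)})]=0$ (which characterizes semi-circularity by their Proposition~3.7). The test polynomial is $P=a\cdot U_1^{\eta_{i(1)}}\bigl(\begin{smallmatrix}\tilde b^*b_1^{(1)}\\b_1^{(1)\prime}\end{smallmatrix}\bigr)(X_{i(1)})+W$, where $\tilde b$ is the very expectation one wants to kill and $a\in\mathbb{C}$ is a free parameter. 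The crucial point is that the variance hypothesis forces $\|U_1(\cdots)(S_{i(1)})\|_\tau^2=\|\partial_{i(1)}U_1(\cdots)\|_{\eta_{i(1)}}^2$, so the $|a|^2$ terms cancel exactly between the two sides of the inequality; varying $a$ then upgrades the inequality to an equality of cross terms, and the induction hypothesis plus the explicit $\partial$-formula for Chebyshev polynomials makes the derivative-side cross term vanish, giving $\tau(\tilde b^*\tilde b)=0$. Your outline does not contain this cancellation-and-polarization idea, and a generic ``quadratic form must vanish'' argument will not produce it: the inequality $(2)$ is not an equality on $\mathcal{T}_\eta B_{\langle d\rangle}$, so there is no obvious defect functional whose vanishing forces cumulants to vanish.
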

This is analogous to a classical fact about Poincar\'{e} inequalities for Gaussian random variables (see \cite[Corollary 1]{bu83}). To prove the main result, we will introduce two objects: One is a $B$-valued analogue of Chebyshev polynomials of the second kind, say $\{U^{\eta}_{n}\}_{n=1}^{\infty}$, which will be called the \textit{$B$-valued Chebyshev family} (see Definition \ref{Def_B_chebyshev}). We believe that the family together with its properties is of independent interest beyond the main result of this paper. Its definition is similar to the classical one, but depends on a given linear map $\eta:B\to B$. Also, the $B$-valued Chebyshev family has a remarkable matrix amplification property, which allows us to describe every element in $B_{\langle d\rangle}$ as an element in $\mathcal{T}_{\eta\otimes \mathrm{id}_{N}}\left(M_{N}(B)\right)_{\langle d\rangle}$ with a sufficiently large $N\in\mathbb{N}$ (see Lemma \ref{lem_matampChebyshev2} and Proposition \ref{prop_chebyshevorthog_formla}). This lemma will be used in the proof of ((2)$\Leftrightarrow$)(1)$\Rightarrow$(3) of the main result. The other is the \textit{divergence operator} $\partial_{j}^*$ with respect to a $B$-valued semi-circular element, which is a $B$-valued (algebraic) analogue of the adjoint $\partial_{X:B}^*$ of $\partial_{X:B}$, which appeared in \cite[Proposition 4.3]{v98} (see Definition \ref{Def_divergence}). 
Similarly to the (classical) Chebyshev polynomials of the second kind, the $B$-valued Chebyshev family $\{U^{\eta}_{n}\}_{n=1}^{\infty}$ are orthogonal with respect to the $B$-valued semi-circular distribution (see Propositions \ref{Prop_Bsemicircular_equivalent} and \ref{Prop_BChebyshev_tensor} and Corollary \ref{cor_orthogonality_chebyshev}) and each $U_{n}^{\eta}$ provides an eigenvector of $\partial_{j}^*\circ\partial_{j}$ with the eigenvalue $n$ (see Proposition \ref{Prop_numberoperator}). Remark that the $L^2$-norm $\|\cdot\|_{\eta_{j}}$ in the above is different from that in Voiculescu's conjecture and also that $\mathcal{T}_{\eta}B_{\langle d\rangle}$ is, in general, not equal to the all non-commutative polynomials $B_{\langle d\rangle}=B\langle X_{1},\dots, X_{d}\rangle$ (see Remark \ref{Rem_exmaple}).
As a corollary of the main result, it will be confirmed that the kernel of the free difference quotient associated with variance with respect to $B$-valued semi-circular system is exactly $L^2(B,\tau)$ (see Corollary \ref{cor_kernel}). This is a variant of Voiculescu's conjecture proposed in \cite{aim06}.

Moreover, we will give a certain simple counterexample to Voiculescu's conjecture on $B$-valued free Poincar\'{e} inequality ``in a naive sense''. 

This paper consists of 6 sections from \ref{section_notation} to \ref{section_counterexample} besides this introduction. 

In section \ref{section_notation}, we will prepare notations, definitions and facts, which will be used later.

In section \ref{section_B_Chebyshev}, we will introduce a $B$-valued generalization of Chebyshev polynomials of the second kind and study its basic properties.

In section \ref{section_conjugate}, we will prove a $B$-valued analogue of Stein's equation for the $B$-free semi-circular system.

In section \ref{section_divergence}, we will introduce the divergence operator in the $B$-valued framework and prove that the composition of the free difference quotient and the divergence operator plays a role of number operator.

In section \ref{section_characterization}, we will prove our main result.

In section \ref{section_counterexample}, we will give a counterexample to Voiculescu's conjecture on $B$-valued free Poincar\'{e} inequality.

Some facts proven in sections \ref{section_conjugate} and \ref{section_divergence} are also seen in \cite{sh00}. However, since our setting is different from his, we will give precise arguments (by different method from his).

\section{Notations and preliminaries}\label{section_notation}
For any $k\in\mathbb{N}$, we write $[k]:=\{1,\dots,k\}$. For any $k,d\in\mathbb{N}$, we denote by $I(k,d)$ the set of all maps from $[k]$ to $[d]$. In particular, we set 
\[
\mathrm{Alt}(I(k,d))=\left\{i\in I(k,d)\,\middle|\,i(j)\not=i(j+1)\mbox{ for all }j\in[k-1]\right\}.
\]
Moreover, we denote by $I(k,\mathbb{N})$ the set of all maps from $[k]$ to $\mathbb{N}$. Let $\mathbb{C}_{\langle d\rangle}=\mathbb{C}\langle X_{1},\dots,X_{d}\rangle$ be the unital algebra of all $\mathbb{C}$-coefficients non-commutative polynomials in $X=(X_{1},\dots,X_{d})$. Let $B$ be a unital algebra over $\mathbb{C}$. We denote by $B_{\langle d\rangle}=B\langle X_{1},\dots,X_{d}\rangle$ the (algebraic) free product of $B$ and $\mathbb{C}_{\langle d\rangle}$.
If $B$ is a unital $*$-algebra and $X_{j}=X_{j}^*$, $j\in[d]$, then $B_{\langle d\rangle}$ naturally becomes a unital $*$-algebra over $\mathbb{C}$. 

Let $A$ be a unital algebra over $\mathbb{C}$ such that $B$ is a unital subalgebra of $A$, and $Y=(Y_{1},\dots,Y_{d})$ elements of $A$. Then, by the universality of free product, there exists a unique algebra homomorphism $\mathrm{ev}_{Y}$ from $B_{\langle d\rangle}$ to $A$ such that $\mathrm{ev}_{Y}(X_{j})=Y_{j}$, $j\in[d]$, and $\mathrm{ev}_{Y}(b)=b$ for all $b\in B$. If $B\subset A$ is a unital inclusion of unital $*$-algebras and $Y_{j}=Y_{j}^*$, $j\in[d]$, then $\mathrm{ev}_{Y}$ becomes a $*$-homomorphism. For any $P(X)=P(X_{1},\dots,X_{d})\in B_{\langle d\rangle}$, we often write $\mathrm{ev}_{Y}(P(X))=P(Y)$.

When $d=1$, we write $X=X_{1}$ for the simplicity of our notation. Then,  we denote by $\partial=\partial_{X:B}$ the \textit{free difference quotient} with respect to a formal random variable $X$ over $B$, which was introduced in \cite{v98} and is a unique derivation from $B\langle X\rangle$ to $B\langle X\rangle\otimes B\langle X\rangle$ such that
\[
\partial[X]=1\otimes1\mbox{ and }B\subset\ker(\partial).
\]
It is easy to see that
\[
\partial
\left[
b_{0}Xb_{1}\cdots Xb_{k}
\right]
=
\sum_{1\leq \ell\leq k}
b_{0}Xb_{1}\cdots Xb_{\ell-1}\otimes b_{\ell}Xb_{\ell+1}\cdots Xb_{k}
\]
for any $b_{0},b_{1},\dots,b_{k}\in B$. When $X=(X_{1},\dots,X_{d})$, $d\geq2$, identifying
\[
B_{\langle d\rangle}=\left(B\langle X_{1},\dots,X_{j-1},X_{j+1},\dots,X_{d}\rangle\right)\langle X_{j}\rangle,
\]
we set $\partial_{j}=\partial_{X_{j}:B\langle X_{1},\dots,X_{j-1},X_{j+1},\dots,X_{d}\rangle}$ for each $j\in[d]$.

\begin{Def}\label{Def_nonprobspace}
    Let $B\subset A$ be a unital inclusion of unital $*$-algebras such that $B$ is assumed to be a unital $C^*$-algebra. A \textit{(positive) conditional expectation} $E$ (onto $B$) is a linear map $E:A\to B$ such that $E[b_{1}ab_{2}]=b_{1}E[a]b_{2}$ for all $a\in A$ and $b_{1},b_{2}\in B$ and that $E[a^*a]\geq0$ for all $a\in A$. The triple $(A,B,E)$ is called a \textit{$B$-valued (non-commutative) probability space}. An element of $A$ is called a \textit{$B$-valued (non-commutative) random variable}. In particular, when there is a faithful tracial state $\tau$ on $A$ such that $\tau\circ E=\tau$, we call the tuple $(A,B,\tau,E)$ a \textit{tracial $B$-valued (non-commutative) probability space}.
\end{Def}

\begin{Def}
    Let $(A,B,E)$ be a $B$-valued probability space and $a_{1},\dots,a_{d}$ be $B$-valued random variables. We say that $a_{1},\dots,a_{n}$ are \textit{$B$-freely independent} if 
    \[
    E\left[
    \left(P_{1}(a_{i(1)})-E\left[P_{1}(a_{i(1)})\right]\right)\cdots \left(P_{k}(a_{i(k)})-E\left[P_{k}(a_{i(k)})\right]\right)
    \right]
    =0
    \]
    whenever $k\in\mathbb{N}$, $P_{1},\dots,P_{k}\in B\langle X\rangle$ and $i(\cdot)\in\mathrm{Alt}(I(k,d))$.
\end{Def}

\begin{Def}\label{Def_Bsemicircular}
    Let $(A,B,E)$ be a $B$-valued probability space and $\eta$ a completely positive map from $B$ to $B$. A $B$-valued random variable $S=S^*$ is called a \textit{$B$-valued semi-circular element} with mean $0$ and variance $\eta$ if 
    \[
    E[b_{0}Sb_{1}\cdots Sb_{k}]
    =
    \sum_{\pi\in NC_{2}(k)}
    b_{0}
    \eta_{\pi}
    (b_{1},\dots,b_{k-1})
    b_{k}
    \]
    for any $k\in\mathbb{N}$ and any $b_{0},b_{1},\dots,b_{k}$, where $\eta_{\pi}$ is the multiplicative $B$-valued functional determined from $\eta_{1_{2}}(b)=\eta(b)=E[SbS]$ for any $b\in B$; for example, $\eta_{\{\{1,4\},\{2,3\}\}}(b_{1},b_{2},b_{3})=\eta(b_{1}\eta(b_{2})b_{3})$. (See \cite{sp98} for details on multiplicative $B$-valued maps. Also, an explanation on intuitive treatments for multiplicative $B$-valued maps is given in \cite{abfn12}.) If $S_{1},\dots,S_{d}$ are $B$-freely independent in $(A,B,E)$ and $S_{j}$ is a $B$-valued semi-circular element with mean $0$ and variance $\eta_{j}$, which is a completely positive map from $B$ to $B$, then $S_{1},\dots,S_{d}$ are called a \textit{$B$-free $B$-valued semi-circular system associated with $\eta_{1},\dots,\eta_{d}$}.
\end{Def}

We have a realization of $B$-free $B$-valued semi-circular system associated with $\eta_{1},\dots,\eta_{d}$ as follows. Let $\eta_{1},\dots,\eta_{d}$ be completely positive maps from $B$ to $B$. Let $\mathcal{F}$ be the $B$-valued (algebraic) full Fock space, that is, 
\[
\mathcal{F}
=
B\oplus
\bigoplus^{\mathrm{alg}}_{m\geq1}\bigoplus^{\mathrm{alg}}_{i\in I(m,d)}
BX_{i(1)}BX_{i(2)}\cdots BX_{i(m)}B
\]
with a $B$-valued pre-inner product $\langle\cdot,\cdot\rangle_{\mathcal{F}}$ defined by
\begin{align*}
    \,&
    \left\langle
    b_{0}X_{i(1)}b_{1}\cdots X_{i(k)}b_{k},
    b'_{0}X_{j(1)}b_{1}'\cdots X_{j(\ell)}b_{\ell}'
    \right\rangle_{\mathcal{F}}\\
    &=
    \delta_{k,\ell}\delta_{i(1),j(1)}\cdots\delta_{i(k),j(k)}
    b_{k}^*\eta_{i(k)}(b_{k-1}^*\eta_{i(k-1)}(b_{k-2}^*\cdots\eta_{i(2)}(b_{1}^*\eta_{i(1)}(b_{0}^*b_{0}')b_{1}')\cdots b_{k-2}')b_{k-1}')b_{k}'
\end{align*}
and
\begin{align*}
    \langle b+P,b'+P'\rangle_{\mathcal{F}}=b^*b'+\langle P,P'\rangle_{\mathcal{F}}
\end{align*}
for any $k,\ell\in\mathbb{N}$, any $b, b_{0},b_{1},\dots,b_{k},b',b_{0}',b_{1}',\dots,b_{\ell}'\in B$ and $P,P'\in \mathcal{F}\ominus B$.

Let $\ell_{j}$ be the linear operator on $\mathcal{F}$ defined by
\[
\ell_{j}\left[b_{0}X_{i(1)}b_{1}\cdots X_{i(k)}b_{k}\right]
=X_{j}b_{0}X_{i(1)}b_{1}\cdots X_{i(k)}b_{k}
\]
for any $k\in\{0\}\cup\mathbb{N}$, any $i\in I(k,d)$ and any $b_{0},b_{1},\dots,b_{k}\in B$. Namely, $\ell_{j}$ is the creation operator with respect to $X_{j}$. Also, let $\ell_{j}^*$ be the linear operator such that 
\[
\ell_{j}^*[b]=0\mbox{ and }
\ell_{j}^*\left[
b_{0}X_{i(1)}b_{1}\cdots X_{i(k)}b_{k}
\right]
=
\delta_{j,i(1)}\eta_{j}(b_{0})b_{1}X_{i(2)}b_{2}\cdots X_{i(k)}b_{k}\,(k\geq1)
\]
for any $b,b_{0},b_{1},\dots,b_{k}\in B$. Then, $\ell_{j}^*$ is actually the adjoint of $\ell_{j}$ with respect to $\langle\cdot,\cdot\rangle_{\mathcal{F}}$, that is, 
\[
\left\langle
\ell_{j}[\xi],\xi'
\right\rangle_{\mathcal{F}}
=
\left\langle
\xi,\ell_{j}^*[\xi']
\right\rangle_{\mathcal{F}}
\]
for any $\xi,\xi'\in \mathcal{F}$. Set $S_{\mathcal{F},j}=\ell_{j}+\ell_{j}^*$. Also, every element $b$ of $B$ naturally acts on $\mathcal{F}$ by left multiplication. 

Let $A_{\mathcal{F}}$ be a unital $*$-subalgebra of the unital algebra of all linear maps on $\mathcal{F}$ generated by $S_{\mathcal{F},j}$, $j\in[d]$, and $B$, and $E_{\mathcal{F}}$ a linear map defined by $E_{\mathcal{F}}[T]=\langle 1,T[1]\rangle_{\mathcal{F}}$ for every $T\in A_{\mathcal{F}}$. Then, we have the following:

\begin{Prop}\label{Prop_realization_Bsemicircular}
    The family $(S_{\mathcal{F},1},\dots,S_{\mathcal{F},d})$ is a $B$-free $B$-valued semi-circular system associated with $\eta_{1},\dots,\eta_{d}$ in the $B$-valued probability space $(A_{\mathcal{F}},B,E_{\mathcal{F}})$. 
\end{Prop}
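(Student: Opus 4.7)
The plan is to verify separately (a) that each $S_{\mathcal{F},j}$ is a $B$-valued semi-circular element of mean $0$ and variance $\eta_{j}$, and (b) that the family $(S_{\mathcal{F},1},\dots,S_{\mathcal{F},d})$ is $B$-free in $(A_{\mathcal{F}},B,E_{\mathcal{F}})$.

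For (a), I would compute
\[
E_{\mathcal{F}}\left[b_{0}S_{\mathcal{F},j}b_{1}\cdots S_{\mathcal{F},j}b_{k}\right]
=
\left\langle 1,\, b_{0}S_{\mathcal{F},j}b_{1}\cdots S_{\mathcal{F},j}b_{k}[1]\right\rangle_{\mathcal{F}}
\]
by expanding each $S_{\mathcal{F},j}=\ell_{j}+\ell_{j}^{*}$ into $2^{k}$ sign patterns $\epsilon\in\{+,-\}^{k}$ and acting on $1\in B\subset\mathcal{F}$ from the right. Because $E_{\mathcal{F}}$ extracts the $B$-component, only those $\epsilon$ contribute for which the length of the accumulated word stays non-negative throughout the right-to-left computation and returns to $0$ at the end; these are precisely the Dyck paths of length $k$, in natural bijection with $\pi\in NC_{2}(k)$. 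At each closing step, the application of $\ell_{j}^{*}$ wraps the current left-most $B$-factor into an $\eta_{j}$, and an induction on the number of pairs shows that the nested pattern of $\eta_{j}$'s produced by the Dyck path associated with $\pi$ realizes exactly the multiplicative functional $\eta_{\pi}$ at $(b_{1},\dots,b_{k-1})$. Summing over $\pi$ yields the semi-circular moment formula.

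For (b), I would establish the key lemma: for any $i\ne j$, any vector $\xi\in\mathcal{F}$ whose leading letter is $X_{i}$, and any $T$ in the unital $*$-subalgebra generated by $S_{\mathcal{F},j}$ and $B$, one has $T[\xi]=T[1]\cdot\xi$, where $\cdot$ denotes concatenation in $\mathcal{F}$ (adjacent $B$-slots being merged by multiplication). This follows by induction on the length of a monomial in $S_{\mathcal{F},j}$ from the three base cases: $b\in B$ and $\ell_{j}$ act only on the left-most slot of $v\cdot\xi$ and therefore commute with concatenation, while $\ell_{j}^{*}[\xi]=0$ because the leading letter of $\xi$ is $X_{i}\ne X_{j}$. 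Since $T[1]$ is automatically a sum of an element of $B$ and words built only from $X_{j}$-letters, a centered $T$ sends such $\xi$ into the span of words beginning with $X_{j}$. Iterating the lemma on an alternating centered product $T_{1}\cdots T_{n}$ with $T_{r}$ in the subalgebra generated by $S_{\mathcal{F},i(r)}$ and $B$, I obtain
\[
(T_{1}\cdots T_{n})[1]=T_{1}[1]\cdot T_{2}[1]\cdots T_{n}[1],
\]
a word whose leading letter is $X_{i(1)}$; hence it has no $B$-component and $E_{\mathcal{F}}[T_{1}\cdots T_{n}]=0$, establishing $B$-freeness.

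The chief technical obstacle is the combinatorial bookkeeping in (a): verifying that the particular nesting of $\eta_{j}$-factors generated along the Dyck path associated with $\pi$ realizes $\eta_{\pi}(b_{1},\dots,b_{k-1})$, rather than some other re-association of these arguments, requires a careful induction on the block structure of $\pi$ that tracks which interior $b_{i}$'s get trapped inside inner $\eta_{j}$-applications at each closing step.
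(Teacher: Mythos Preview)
Your proposal is correct and follows the standard route found in the references the paper cites (Speicher's memoir \cite{sp98} and Shlyakhtenko \cite{sh99}); note that the paper itself does not give a proof of this proposition, simply writing ``See \cite[Chapter~4, section~4.6]{sp98} (and also \cite[section~2]{sh99}) for its details.'' Your key lemma for $B$-freeness---that concatenation by a word $\xi$ with leading letter $X_{i}$ intertwines the action of $b$, $\ell_{j}$, $\ell_{j}^{*}$ for $j\ne i$---is exactly the mechanism those references use, and your Dyck-path bookkeeping for the semicircular moments is the standard combinatorial verification.
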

See \cite[Chapter 4, section 4.6]{sp98} (and also \cite[section 2]{sh99}) for its details.

\section{$B$-valued Chebyshev family associated with $\eta$}\label{section_B_Chebyshev}
Let $\eta$ be a linear map from $B$ to $B$. The following object plays a central role in this paper:
\begin{Def}\label{Def_B_chebyshev}
    The \textit{$B$-valued Chebyshev family associated with $\eta$} is a family $\{U_{n}^{\eta}=U_{n}\}_{n\geq1}$ of $2n$-multilinear maps $U_{n}:(B\times B)^n\to B\langle X\rangle$ defined by 
    $U_{1}
    \left(

        \right)(X)
        \otimes b_{n+1}',
    \end{align*}
    where the induction hypothesis was used in the second equality and the recursion formula in Definition \ref{Def_B_chebyshev} was used in the third equality. Thus, the proof has been completed by induction.
\end{proof}

\begin{Prop}\label{Prop_BChebyshev_tensor}
    Let $B$ be a unital $C^*$-algebra and $\eta_{1},\dots,\eta_{d}$ completely positive maps from $B$ to $B$, and $(A_{\mathcal{F}},B,E_{\mathcal{F}})$ and $(S_{\mathcal{F},i})_{i\in [d]}$ the realization of $B$-free $B$-valued semi-circular system associated with $\eta_{1},\dots,\eta_{d}$ (see Proposition \ref{Prop_realization_Bsemicircular}). Then, we have
    \begin{align*}
        \,&
        U_{n(1)}^{\eta_{i(1)}}
        \left(
        \begin{smallmatrix}
            b^{(1)}_{1}\\
            b^{(1)'}_{1}
        \end{smallmatrix}
        ;\cdots;
        \begin{smallmatrix}
            b^{(1)}_{n(1)}\\
            b^{(1)'}_{n(1)}
        \end{smallmatrix}
        \right)
        (S_{\mathcal{F},i(1)})
        \cdots
        U_{n(k)}^{\eta_{i(k)}}
        \left(
        \begin{smallmatrix}
            b^{(k)}_{1}\\
            b^{(k)'}_{1}
        \end{smallmatrix}
        ;\cdots;
        \begin{smallmatrix}
            b^{(k)}_{n(k)}\\
            b^{(k)'}_{n(k)}
         \end{smallmatrix}
        \right)
        (S_{\mathcal{F},i(k)})
        [1]\\
        &=
        \bigl(
        (b^{(1)}_{1}X_{i(1)}b^{(1)'}_{1})\cdots(b^{(1)}_{n(1)}X_{i(1)}b^{(1)'}_{n(1)})
        \bigr)\cdots
        \bigl(
        (b^{(k)}_{1}X_{i(k)}b^{(k)'}_{1})\cdots(b^{(k)}_{n(k)}X_{i(k)}b^{(k)'}_{n(k)})
        \bigr)
    \end{align*}
    for any $k\in\mathbb{N}$, any $n\in I(k,\mathbb{N})$, any $i\in\mathrm{Alt}(I(k,d))$ and $(b^{(j)}_{\ell},b^{(j)'}_{\ell})\in B\times B$ ($j\in[k]$ and $\ell\in[n(j)]$).
\end{Prop}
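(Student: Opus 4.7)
The plan is to decompose the proof into two stages: first prove the single-block case $k=1$ by induction on $n$, then deduce the general case by a commutation argument that exploits the alternating condition on $i$.

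For the single-block case, I would prove by induction on $n$ that
\[
U_{n}^{\eta_{i}}\!\left(\begin{smallmatrix}b_{1}\\b_{1}'\end{smallmatrix};\ldots;\begin{smallmatrix}b_{n}\\b_{n}'\end{smallmatrix}\right)(S_{\mathcal{F},i})[1]=b_{1}X_{i}b_{1}'\cdots b_{n}X_{i}b_{n}'.
\]
The base cases $n=1,2$ are direct computations using $\ell_{i}^{*}[b]=0$ for $b\in B$. For the inductive step, one applies the recursion of Definition \ref{Def_B_chebyshev} to $1$. The first summand $U_{1}\!\left(\begin{smallmatrix}b_{1}\\b_{1}'\end{smallmatrix}\right)(S_{\mathcal{F},i})\cdot U_{n-1}(\cdots)(S_{\mathcal{F},i})[1]$ reduces, via the induction hypothesis and $S_{\mathcal{F},i}=\ell_{i}+\ell_{i}^{*}$, to the desired word plus the correction $b_{1}\ell_{i}^{*}[b_{1}'b_{2}X_{i}b_{2}'\cdots b_{n}X_{i}b_{n}']=b_{1}\eta_{i}(b_{1}'b_{2})b_{2}'b_{3}X_{i}b_{3}'\cdots b_{n}X_{i}b_{n}'$; this correction exactly cancels the second summand of the recursion after applying the induction hypothesis at level $n-2$ to the data $(b_{3},b_{3}';\ldots;b_{n},b_{n}')$.

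For the multi-block case, I would first establish an auxiliary commutation lemma. Given any word $V=c_{0}X_{j_{1}}c_{1}\cdots X_{j_{m}}c_{m}\in\mathcal{F}$ whose leading index satisfies $j_{1}\neq i$, let $R_{V}:\mathcal{F}\to\mathcal{F}$ denote right concatenation by $V$, i.e.\ $R_{V}(\xi)=\xi V$ inside the algebra $\mathcal{F}$. Then $R_{V}$ commutes with $\ell_{i}$ (obvious from associativity), with $\ell_{i}^{*}$ (because the leading letter $X_{j_{1}}$ of $V$ blocks $\ell_{i}^{*}$ from ever reaching $V$ itself, so $\ell_{i}^{*}R_{V}\xi$ and $R_{V}\ell_{i}^{*}\xi$ are both obtained by the same Kronecker-$\delta$ on the leading letter of $\xi$), and with left multiplication by every $b\in B$. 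Hence $R_{V}$ commutes with the whole unital $*$-subalgebra of $A_{\mathcal{F}}$ generated by $B$ and $S_{\mathcal{F},i}$, which contains the operator $P_{j}:=U_{n(j)}^{\eta_{i(j)}}(\cdots)(S_{\mathcal{F},i(j)})$ whenever $j_{1}\neq i(j)$.

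Using this, I would conclude by backward induction on $j$ from $k$ down to $1$ that
\[
P_{j}P_{j+1}\cdots P_{k}[1]=W_{j}W_{j+1}\cdots W_{k},
\]
where $W_{j}$ denotes the desired word in the statement. The base $j=k$ is Stage~1. For the step, set $V_{j+1}:=W_{j+1}\cdots W_{k}$; by the induction hypothesis, $P_{j+1}\cdots P_{k}[1]=V_{j+1}$, and $V_{j+1}$ begins with the letter $X_{i(j+1)}$ with $i(j+1)\neq i(j)$ by the alternating condition. The commutation lemma gives $P_{j}[V_{j+1}]=P_{j}R_{V_{j+1}}(1)=R_{V_{j+1}}P_{j}[1]=R_{V_{j+1}}(W_{j})=W_{j}V_{j+1}$, closing the induction. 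The main obstacle is the bookkeeping in Stage~1, where one must carefully match the $\ell_{i}^{*}$-generated correction against the $b_{1}\eta_{i}(b_{1}'b_{2})b_{2}'U_{n-2}^{\eta_{i}}(\cdots)$ term of the Chebyshev recursion; once Stage~1 is in place, Stage~2 is a clean algebraic consequence of the $R_{V}$-commutation lemma together with the alternating hypothesis.
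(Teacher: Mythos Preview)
Your proposal is correct and follows essentially the same approach as the paper: the single-block case $k=1$ is proved by induction on $n$ exactly as you describe, and the multi-block case rests on the same observation that $\ell_{i}^{*}$ annihilates words whose leading letter carries a different index (which is precisely what your commutation lemma encodes). The paper only sketches the general case---writing out the relevant creation/annihilation identities for $k=2$ and declaring the rest similar---so your $R_{V}$-commutation formulation is a slightly more explicit packaging of the same idea rather than a different route.
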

\begin{proof}
    We will give the proof only in the case of $k=1$ (and leave a comment for the case of $k=2$). The other general cases can also be treated similarly to the discussion below (\textit{n.b.}, the assumption of $i\in\mathrm{Alt}(I(k,d))$, that is, $i(j)\not=i(j+1)$, is essential in the general cases). By the definition of $S_{\mathcal{F}}$, it is easily seen that
    $U_{1}
    \left(
    \begin{smallmatrix}
        b\\
        b'
    \end{smallmatrix}
    \right)
    (S_{\mathcal{F}})[1]
    =bXb'$
    for all $b,b'\in B$. Moreover, we observe that
    \begin{align*}
        U_{2}
        \left(
        \begin{smallmatrix}
            b_{1}\\
            b_{1}'
        \end{smallmatrix}
        ;
        \begin{smallmatrix}
            b_{2}\\
            b_{2}'
        \end{smallmatrix}
        \right)
        (S_{\mathcal{F}})
        [1]
        &=
        (b_{1}S_{\mathcal{F}}b_{1}')(b_{2}S_{\mathcal{F}}b_{2}')[1]
        -b_{1}\eta(b_{1}'b_{2})b_{2}'[1]\\
        &=
        b_{1}S_{\mathcal{F}}b_{1}'\left[b_{2}Xb_{2}'\right]
        -b_{1}\eta(b_{1}'b_{2})b_{2}'\\
        &=b_{1}(\ell+\ell^*)[b_{1}'b_{2}Xb_{2}']
        -b_{1}\eta(b_{1}'b_{2})b_{2}'\\
        &=
        b_{1}Xb_{1}'b_{2}Xb_{2}'
        +b_{1}\eta(b_{1}'b_{2})b_{2}'
        -b_{1}\eta(b_{1}'b_{2})b_{2}'\\
        &=
        (b_{1}Xb_{1}')(b_{2}Xb_{2}')
    \end{align*}
    for all $b_{1},b_{1}',b_{2},b_{2}'\in B$. Assume that we have proved the desired formula in the case of $n\in[N]$ for some $N\geq2$. Then, we observe, using the recursion formula in Definition \ref{Def_B_chebyshev}, that
    \begin{align*}
        \,&
        U_{N+1}
        \left(
        \begin{smallmatrix}
            b_{1}\\
            b_{1}'
        \end{smallmatrix}
        ;\cdots;
        \begin{smallmatrix}
            b_{N+1}\\
            b_{N+1}'
        \end{smallmatrix}
        \right)
        (S_{\mathcal{F}})
        [1]\\
        &=
        U_{1}
        \left(
        \begin{smallmatrix}
            b_{1}\\
            b_{1}'
        \end{smallmatrix}
        \right)
        (S_{\mathcal{F}})
        U_{N}
        \left(
        \begin{smallmatrix}
            b_{2}\\
            b_{2}
        \end{smallmatrix}
        ;\cdots;
        \begin{smallmatrix}
            b_{N+1}\\
            b_{N+1}'
        \end{smallmatrix}
        \right)
        (S_{\mathcal{F}})[1]
        -b_{1}\eta(b_{1}'b_{2})b_{2}'
        U_{N-1}
        \left(
        \begin{smallmatrix}
            b_{3}\\
            b_{3}'
        \end{smallmatrix}
        ;\cdots;
        \begin{smallmatrix}
            b_{N+1}\\
            b_{N+1}'
        \end{smallmatrix}
        \right)
        (S_{\mathcal{F}})[1]\\
        &=
        b_{1}S_{\mathcal{F}}b_{1}'
        \left[
        (b_{2}Xb_{2}')
        \cdots(b_{N+1}Xb_{N+1}')
        \right]
        -
        b_{1}\eta(b_{1}'b_{2})b_{2}'
        (b_{3}Xb_{3}')\cdots (b_{N+1}Xb_{N+1}')\\
        &=
        (b_{1}Xb_{1}')(b_{2}Xb_{2}')
        \cdots(b_{N+1}Xb_{N+1}')
        +b_{1}\eta(b_{1}'b_{2})b_{2}'(b_{3}Xb_{3}')\cdots (b_{N+1}Xb_{N+1}')\\
        &\quad
        -
        b_{1}\eta(b_{1}'b_{2})b_{2}'
        (b_{3}Xb_{3}')\cdots (b_{N+1}Xb_{N+1}')\\
        &=
        (b_{1}Xb_{1}')(b_{2}Xb_{2}')
        \cdots(b_{N+1}Xb_{N+1}').
    \end{align*}
    Hence, the proof has been completed by induction.

    We can also see the case of $k=2$ by the same computation as the case of $k=1$, since 
    \[\ell_{i(1)}(b_{0}X_{i(2)}b_{1}X_{i(2)}b_{3}\cdots X_{i(2)}b_{n})=X_{i(1)}b_{0}X_{i(2)}b_{1}X_{i(2)}b_{3}\cdots X_{i(2)}b_{n},
    \]
    \[
    \ell_{i(1)}^*(b_{0}X_{i(2)}b_{1}X_{i(2)}b_{3}\cdots X_{i(2)}b_{n})
    =\delta_{i(1),i(2)}\eta_{i(1)}(b_{0})b_{1}X_{i(2)}b_{3}\cdots X_{i(2)}b_{n}=0
    \] for any $i\in\mathrm{Alt}(I(2,d))$ (that is, $i(1)\not=i(2)$).
\end{proof}

Using Proposition \ref{Prop_BChebyshev_tensor}, we have the following corollary:
\begin{Cor}\label{cor_orthogonality_chebyshev}
   Under the same notation as Proposition \ref{Prop_Bsemicircular_equivalent}, the $B$-valued Chebyshev family associated with $\eta=(\eta_{1},\dots,\eta_{d})$ are orthogonal with respect to $B$-free $B$-valued semi-circular system $S=(S_{1},\dots,S_{d})$. Namely, for any $M,N\in\mathbb{N}$, $m\in I(M,\mathbb{N})$, $n\in I(N,\mathbb{N})$ and $i\in\mathrm{Alt}(I(M,d))$, $j\in\mathrm{Alt}(I(N,d))$, if $m\not=n$ or $i\not=j$, then 
   \begin{align*}
   \,&E\Bigl[
   \left(U^{\eta_{i(M)}}_{m(M)}
   \left(
   \begin{smallmatrix}
       b_{1,M}\\
       b'_{1,M}
   \end{smallmatrix}
   ;\dots;
   \begin{smallmatrix}
       b_{m(M),M}\\
       b'_{m(M),M}
   \end{smallmatrix}
   \right)(S_{i(M)})\cdots
   U^{\eta_{i(1)}}_{m(1)}
   \left(
   \begin{smallmatrix}
       b_{1,1}\\
       b'_{1,1}
   \end{smallmatrix}
   ;\dots;
   \begin{smallmatrix}
       b_{m(1),1}\\
       b'_{m(1),1}
   \end{smallmatrix}
   \right)(S_{i(1)})\right)^*\\
   &\quad\times
   U^{\eta_{j(1)}}_{n(1)}
   \left(
   \begin{smallmatrix}
       c_{1,1}\\
       c'_{1,1}
   \end{smallmatrix}
   ;\dots;
   \begin{smallmatrix}
       c_{n(1),1}\\
       c'_{n(1),1}
   \end{smallmatrix}
   \right)(S_{j(1)})
   \cdots
   U^{\eta_{j(N)}}_{n(N)}
   \left(
   \begin{smallmatrix}
       c_{1,N}\\
       c_{1,N}'
   \end{smallmatrix}
   ;\dots;
   \begin{smallmatrix}
       c_{n(N),N}\\
       c_{n(N),N}'
   \end{smallmatrix}
   \right)(S_{j(N)})
   \Bigr]=0.
   \end{align*}
\end{Cor}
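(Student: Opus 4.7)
My plan is to reduce to the Fock-space realization from Proposition~\ref{Prop_realization_Bsemicircular} and then use Proposition~\ref{Prop_BChebyshev_tensor} to turn each Chebyshev product, applied to the vacuum, into a single basis word of $\mathcal{F}$, after which the vanishing is immediate from the defining formula of $\langle\cdot,\cdot\rangle_{\mathcal{F}}$. First I would observe that a $B$-free semi-circular system with prescribed variances $(\eta_{1},\dots,\eta_{d})$ has joint $E$-moments uniquely determined by the combinatorial formula in Definition~\ref{Def_Bsemicircular}, so it suffices to prove the identity after replacing $(S_{1},\dots,S_{d})$ by $(S_{\mathcal{F},1},\dots,S_{\mathcal{F},d})$ and $E$ by $E_{\mathcal{F}}$. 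Since $\ell_{j}^{*}$ was introduced as the Hilbert-space adjoint of $\ell_{j}$, the $*$-operation on $A_{\mathcal{F}}$ agrees with the operator adjoint on $\mathcal{F}$, so
\[
E_{\mathcal{F}}[X^{*}Y]=\langle 1,X^{*}Y[1]\rangle_{\mathcal{F}}=\langle X[1],Y[1]\rangle_{\mathcal{F}},
\]
where $X$ and $Y$ denote the left and right Chebyshev products in the statement.

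Next I would apply Proposition~\ref{Prop_BChebyshev_tensor} separately to $X$ and to $Y$; this is legitimate because the reversed sequence $(i(M),i(M-1),\dots,i(1))$ inherits the $\mathrm{Alt}$ property from $i$, and $(j(1),\dots,j(N))$ is already in $\mathrm{Alt}$. The proposition then presents $X[1]$ as a single basis word of $\mathcal{F}$ whose letters (read left to right) consist of $m(M)$ copies of $X_{i(M)}$, followed by $m(M-1)$ copies of $X_{i(M-1)}$, and so on down to $m(1)$ copies of $X_{i(1)}$, with appropriate $B$-coefficients interposed. Similarly, $Y[1]$ is a single basis word whose letters are $n(1)$ copies of $X_{j(1)}$, followed by $n(2)$ copies of $X_{j(2)}$, and so on up to $n(N)$ copies of $X_{j(N)}$.

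Finally, the defining formula of the $B$-valued inner product $\langle\cdot,\cdot\rangle_{\mathcal{F}}$ shows that the pairing of two basis words is a product of Kronecker $\delta$'s in their $X$-letters and vanishes unless the two letter strings coincide position by position. Because $i$ and $j$ each satisfy $\mathrm{Alt}$, adjacent blocks of constant index cannot merge, so positionwise equality of the two strings forces $M=N$ together with equality of the corresponding block sizes and indices (aligned according to the matching convention of the corollary). Hence whenever ``$m=n$ and $i=j$'' fails, the two strings disagree somewhere and $\langle X[1],Y[1]\rangle_{\mathcal{F}}=0$, which yields the desired vanishing of $E[X^{*}Y]$.

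The only real obstacle I foresee is the reversal bookkeeping caused by the adjoint: the factors of $X$ appear in the operator product indexed from $M$ down to $1$, so the $X$-letters of $X[1]$ appear in the opposite order relative to the original indexing of the $U_{m(k)}^{\eta_{i(k)}}$'s. I would therefore take care to verify that, under this reversal, positionwise matching of the two words lines up correctly with the $(1,\dots,N)$ ordering on the $Y$-side and with the statement's identification ``$m=n$, $i=j$'' before concluding.
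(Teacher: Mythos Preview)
Your approach is correct and coincides with the paper's: the corollary is stated immediately after Proposition~\ref{Prop_BChebyshev_tensor} with no separate proof, precisely because the argument is the one you give---pass to the Fock model, rewrite $E_{\mathcal F}[X^{*}Y]$ as $\langle X[1],Y[1]\rangle_{\mathcal F}$, apply Proposition~\ref{Prop_BChebyshev_tensor} to each side, and invoke the Kronecker-$\delta$ structure of $\langle\cdot,\cdot\rangle_{\mathcal F}$ together with the uniqueness of the block decomposition forced by the $\mathrm{Alt}$ condition.

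Your closing concern about reversal bookkeeping is not merely cosmetic; it is the one point that actually requires care. With $X=U^{\eta_{i(M)}}_{m(M)}\cdots U^{\eta_{i(1)}}_{m(1)}$ written in decreasing index order, your computation shows that $\langle X[1],Y[1]\rangle_{\mathcal F}$ can be nonzero exactly when $M=N$ and $i(M{+}1{-}k)=j(k)$, $m(M{+}1{-}k)=n(k)$ for all $k$, i.e.\ the \emph{reversal} of $(m,i)$ matches $(n,j)$. This is how the condition ``$m=n$, $i=j$'' in the corollary must be read: the author has pre-reversed the first product so that, after the adjoint, its factors line up index-for-index with those of $Y$. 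Once you make that identification explicit, your argument closes without further obstacles.
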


\begin{Lem}\label{Lem_spann_BChebyshev}
    The space $B\langle X\rangle$ is spanned by $B$ and the $B$-valued Chebyshev family $\{U^{\eta}_{n}\}_{n\geq1}$ associated with $\eta$ as a $B$-bimodule.
\end{Lem}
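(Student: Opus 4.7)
The plan is to induct on the degree in $X$. Since $B\langle X\rangle$ is $\mathbb{C}$-linearly spanned by $B$ together with monomials of the form $b_{0}Xb_{1}X\cdots Xb_{n}$ for $n\geq 1$, and each such monomial equals $b_{0}\cdot(1\cdot X\cdot b_{1})(1\cdot X\cdot b_{2})\cdots(1\cdot X\cdot b_{n})$, it suffices to prove that every product $W_{n}:=(b_{1}Xb_{1}')(b_{2}Xb_{2}')\cdots(b_{n}Xb_{n}')$ with $b_{i},b_{i}'\in B$ lies in the $B$-sub-bimodule $\mathcal{M}$ of $B\langle X\rangle$ generated by $B$ and the family $\{U_{k}^{\eta}\}_{k\geq1}$. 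By the partial $B$-balancedness recorded in the Remark following Definition \ref{Def_B_chebyshev}, each $\mathrm{ran}(U_{k}^{\eta})$ is already stable under the left/right $B$-actions, so $\mathcal{M}$ coincides with the $\mathbb{C}$-linear span of $B\cup\bigcup_{k\geq1}\mathrm{ran}(U_{k}^{\eta})$.

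The base cases $n=1,2$ are immediate from the defining identities: $W_{1}=U_{1}^{\eta}\!\left(\begin{smallmatrix}b_{1}\\b_{1}'\end{smallmatrix}\right)(X)$ and $W_{2}=U_{2}^{\eta}\!\left(\begin{smallmatrix}b_{1}\\b_{1}'\end{smallmatrix};\begin{smallmatrix}b_{2}\\b_{2}'\end{smallmatrix}\right)(X)+b_{1}\eta(b_{1}'b_{2})b_{2}'$. For $n\geq3$ I rearrange the recursion in Definition \ref{Def_B_chebyshev} into
\[
(b_{1}Xb_{1}')\cdot U_{n-1}^{\eta}\!\left(\begin{smallmatrix}b_{2}\\b_{2}'\end{smallmatrix};\cdots;\begin{smallmatrix}b_{n}\\b_{n}'\end{smallmatrix}\right)(X)=U_{n}^{\eta}\!\left(\begin{smallmatrix}b_{1}\\b_{1}'\end{smallmatrix};\cdots;\begin{smallmatrix}b_{n}\\b_{n}'\end{smallmatrix}\right)(X)+b_{1}\eta(b_{1}'b_{2})b_{2}'\cdot U_{n-2}^{\eta}\!\left(\begin{smallmatrix}b_{3}\\b_{3}'\end{smallmatrix};\cdots;\begin{smallmatrix}b_{n}\\b_{n}'\end{smallmatrix}\right)(X),
\]
and apply the inductive hypothesis to the truncated product $V:=(b_{2}Xb_{2}')\cdots(b_{n}Xb_{n}')$ to write $V=U_{n-1}^{\eta}\!\left(\begin{smallmatrix}b_{2}\\b_{2}'\end{smallmatrix};\cdots;\begin{smallmatrix}b_{n}\\b_{n}'\end{smallmatrix}\right)(X)+R$, where $R$ is a $\mathbb{C}$-linear combination of elements of $B$ and of $\mathrm{ran}(U_{k}^{\eta})$ for $k\leq n-2$. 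Substituting and solving for $W_{n}=(b_{1}Xb_{1}')\cdot V$ gives
\[
W_{n}=U_{n}^{\eta}\!\left(\begin{smallmatrix}b_{1}\\b_{1}'\end{smallmatrix};\cdots;\begin{smallmatrix}b_{n}\\b_{n}'\end{smallmatrix}\right)(X)+b_{1}\eta(b_{1}'b_{2})b_{2}'\cdot U_{n-2}^{\eta}\!\left(\begin{smallmatrix}b_{3}\\b_{3}'\end{smallmatrix};\cdots;\begin{smallmatrix}b_{n}\\b_{n}'\end{smallmatrix}\right)(X)+(b_{1}Xb_{1}')\cdot R,
\]
the first two summands already lying in $\mathcal{M}$.

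The only point requiring extra care\,---\,and what I expect to be the mild obstacle\,---\,is showing that the residual $(b_{1}Xb_{1}')\cdot R$ still belongs to $\mathcal{M}$; an induction on $n$ alone does not suffice, because multiplying by $(b_{1}Xb_{1}')$ raises degree. By linearity this reduces to two cases. For $b\in B$ one has $(b_{1}Xb_{1}')\cdot b=U_{1}^{\eta}\!\left(\begin{smallmatrix}b_{1}\\b_{1}'b\end{smallmatrix}\right)(X)\in\mathrm{ran}(U_{1}^{\eta})$, and for $k\leq n-2$, reapplying the rearranged recursion above with $\left(\begin{smallmatrix}b_{1}\\b_{1}'\end{smallmatrix}\right)$ prepended yields
\[
(b_{1}Xb_{1}')\cdot U_{k}^{\eta}\!\left(\begin{smallmatrix}c_{1}\\c_{1}'\end{smallmatrix};\cdots;\begin{smallmatrix}c_{k}\\c_{k}'\end{smallmatrix}\right)(X)=U_{k+1}^{\eta}\!\left(\begin{smallmatrix}b_{1}\\b_{1}'\end{smallmatrix};\begin{smallmatrix}c_{1}\\c_{1}'\end{smallmatrix};\cdots;\begin{smallmatrix}c_{k}\\c_{k}'\end{smallmatrix}\right)(X)+b_{1}\eta(b_{1}'c_{1})c_{1}'\cdot U_{k-1}^{\eta}\!\left(\begin{smallmatrix}c_{2}\\c_{2}'\end{smallmatrix};\cdots;\begin{smallmatrix}c_{k}\\c_{k}'\end{smallmatrix}\right)(X),
\]
which visibly lies in $\mathcal{M}$. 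This closes the induction; the entire argument is essentially bookkeeping driven by the three-term recursion, with no genuine obstacle beyond a single additional application of that recursion to absorb the residual term.
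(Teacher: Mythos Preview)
Your proof is correct and follows essentially the same induction as the paper: write the truncated product via the inductive hypothesis, then use the rearranged three-term recursion to absorb each $(b_{1}Xb_{1}')\cdot U_{k}^{\eta}$ back into $\mathcal{M}$. One small point: the specific decomposition $V=U_{n-1}^{\eta}(b_{2},\dots,b_{n})+R$ with $R$ supported only in degrees $\leq n-2$ is stronger than your stated hypothesis ($V\in\mathcal{M}$), so either strengthen the hypothesis accordingly or---as the paper does---drop that intermediate step, since your final-paragraph argument already handles an arbitrary expansion of $V$ term by term.
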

\begin{proof}
    It is clear that $B\oplus BXB=\mathrm{span}\{B,\mathrm{ran}(U_{1})\}$, where $BXB=\mathrm{span}\{bXb'\,|\,b,b'\in B\}$. Assume that we have shown that $B\oplus\bigoplus_{1\leq k\leq m}B(XB)^{k}=\mathrm{span}\{B,\mathrm{ran}(U_{1}),\dots,\mathrm{ran}(U_{m})\}$ for any $1\leq m\leq N$. Then, we have to see that
    \[
    (b_{1}Xb_{1}')\cdots (b_{N+1}Xb_{N+1}')\in\mathrm{span}\{B,\mathrm{ran}(U_{1}),\dots,\mathrm{ran}(U_{N+1})\}
    \]
    for any $b_{1},b_{1}',\dots,b_{N+1},b_{N+1}'\in B$. By the induction hypothesis, there exist $j_{1},\dots,j_{m}\in[N]$ (with $j_{1}\leq\cdots\leq j_{m}$) and families $(b^{(\ell)}_{s};1\leq s\leq j_{\ell})$, $\ell\in[m]$, of elements of $B$ such that
    \[
    (b_{2}Xb_{2}')\cdots(b_{N+1}Xb_{N+1}')
    =
    \sum_{1\leq \ell\leq m}
    U_{j_{\ell}}
    \left(
    \begin{smallmatrix}
        b^{(\ell)}_{1}\\
        b^{(\ell)'}_{1}
    \end{smallmatrix}
    ;\cdots;
    \begin{smallmatrix}
        b^{(\ell)}_{j_{\ell}}\\
        b^{(\ell)'}_{j_{\ell}}
    \end{smallmatrix}
    \right)(X).
    \]
    By $b_{1}Xb_{1}'=U_{1}\left(\begin{smallmatrix}b_{1}\\ b_{1}'\end{smallmatrix}\right)(X)$ and the recursion formula in Definition \ref{Def_B_chebyshev}, we observe that
    \begin{align*}
        \,&(b_{1}Xb_{1}')(b_{2}Xb_{2}')\cdots (b_{N+1}Xb_{N+1}')\\
        &=
        \sum_{1\leq \ell\leq m}
        U_{1}
        \left(
        \begin{smallmatrix}
            b_{1}\\ 
            b_{1}'
        \end{smallmatrix}\right)(X)
        U_{j_{\ell}}
        \left(
        \begin{smallmatrix}
            b^{(\ell)}_{1}\\
            b^{(\ell)'}_{1}
        \end{smallmatrix}
        ;\cdots;
        \begin{smallmatrix}
            b^{(\ell)}_{j_{\ell}}\\
            b^{(\ell)'}_{j_{\ell}}
        \end{smallmatrix}
        \right)(X)\\
        &=
        \sum_{1\leq \ell\leq m}
        U_{j_{\ell}+1}
        \left(
        \begin{smallmatrix}
            b_{1}\\
            b_{1}'
        \end{smallmatrix};
        \begin{smallmatrix}
            b^{(\ell)}_{1}\\
            b^{(\ell)'}_{1}
        \end{smallmatrix}
        ;\cdots;
        \begin{smallmatrix}
            b^{(\ell)}_{j_{\ell}}\\
            b^{(\ell)'}_{j_{\ell}}
        \end{smallmatrix}
        \right)(X)
        +b_{1}\eta(b_{1}'b^{(\ell)}_{1})b^{(\ell)'}_{1}
        U_{j_{\ell}-1}
        \left(
        \begin{smallmatrix}
            b^{(\ell)}_{2}\\
            b^{(\ell)'}_{2}
        \end{smallmatrix}
        ;\cdots;
        \begin{smallmatrix}
            b^{(\ell)}_{j_{\ell}}\\
            b^{(\ell)'}_{j_{\ell}}
        \end{smallmatrix}
        \right)(X).
    \end{align*}
    The right-hand side is in $\mathrm{span}\{B,\mathrm{ran}(U_{1}),\dots,\mathrm{ran}(U_{N+1})\}$, since $j_{1},\dots,j_{m}\in[N]$, and hence the proof has been completed by induction.
\end{proof}

By Propositions \ref{Prop_realization_Bsemicircular} and \ref{Prop_BChebyshev_tensor} and Lemma \ref{Lem_spann_BChebyshev}, we have the following proposition:

\begin{Prop}\label{Prop_Bsemicircular_equivalent}
    Let $(A,B,E)$ be a $B$-valued probability space with completely positive maps $\eta_{1},\dots,\eta_{d}$ from $B$ to $B$. Let $S_{1},\dots,S_{d}$ be self-adjoint $B$-valued random variables with mean $0$ and variance $\eta_{1},\dots,\eta_{d}$, respectively, in $(A,B,E)$. Then, the following are equivalent:
    \begin{enumerate}
        \item $S=(S_{1},\dots,S_{d})$ is a $B$-free $B$-valued semi-circular system associated with $\eta_{1},\dots,\eta_{d}$.
        \item We have
        \[
        E
        \left[
        U_{n(1)}^{\eta_{i(1)}}
        \left(
        \begin{smallmatrix}
            b^{(1)}_{1}\\
            b^{(1)'}_{1}
        \end{smallmatrix}
        ;\cdots;
        \begin{smallmatrix}
            b^{(1)}_{n(1)}\\
            b^{(1)'}_{n(1)}
        \end{smallmatrix}
        \right)
        (S_{i(1)})
        \cdots
        U_{n(k)}^{\eta_{i(k)}}
        \left(
        \begin{smallmatrix}
            b^{(k)}_{1}\\
            b^{(k)'}_{1}
        \end{smallmatrix}
        ;\cdots;
        \begin{smallmatrix}
            b^{(k)}_{n(k)}\\
            b^{(k)'}_{n(k)}
         \end{smallmatrix}
        \right)
        (S_{i(k)})
        \right]
        =0
        \]
        for any $k\in\mathbb{N}$, $n\in I(k,\mathbb{N})$, any $i\in \mathrm{Alt}(I(k,d))$ and any $(b^{(j)}_{\ell},b^{(j)'}_{\ell})\in B\times B$ ($j\in[k]$ and $\ell\in[n(j)]$).
    \end{enumerate}
\end{Prop}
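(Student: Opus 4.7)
My plan is to use the Fock model $(A_{\mathcal{F}},B,E_{\mathcal{F}})$ of Proposition \ref{Prop_realization_Bsemicircular} as a concrete realization whose joint moments are known, and to bridge between the abstract system $S=(S_{1},\dots,S_{d})$ and this model via the evaluation identity of Proposition \ref{Prop_BChebyshev_tensor} together with the spanning statement of Lemma \ref{Lem_spann_BChebyshev}.

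For (1)$\Rightarrow$(2): any $B$-free $B$-valued semi-circular system with prescribed covariance data $(\eta_{j})$ has joint $B$-valued moments uniquely determined by $B$-freeness and the non-crossing pair-partition formula of Definition \ref{Def_Bsemicircular}, so $E$ and $E_{\mathcal{F}}$ must coincide on every element of $B\langle X_{1},\dots,X_{d}\rangle$ evaluated at the corresponding tuple. It thus suffices to verify (2) in the Fock realization. There, Proposition \ref{Prop_BChebyshev_tensor} identifies the alternating Chebyshev product acting on the vacuum $1\in\mathcal{F}$ with a pure monomial $(b^{(1)}_{1}X_{i(1)}b^{(1)'}_{1})\cdots(b^{(k)}_{n(k)}X_{i(k)}b^{(k)'}_{n(k)})$ of total length $n(1)+\cdots+n(k)\geq1$, which lies in $\mathcal{F}\ominus B$. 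Since $E_{\mathcal{F}}[T]=\langle 1,T[1]\rangle_{\mathcal{F}}$ and $1$ is orthogonal to $\mathcal{F}\ominus B$ by construction of $\langle\cdot,\cdot\rangle_{\mathcal{F}}$, the corresponding expectation vanishes, giving (2).

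For (2)$\Rightarrow$(1): the Fock model itself satisfies (2) by the preceding argument, so it is enough to show that (2) together with the standing assumptions $E|_{B}=\mathrm{id}_{B}$, $E[S_{j}]=0$ and $E[S_{j}bS_{j}]=\eta_{j}(b)$ uniquely determines every mixed moment $E[b_{0}S_{i(1)}b_{1}\cdots S_{i(k)}b_{k}]$; then $S$ shares the same joint $B$-distribution with $(S_{\mathcal{F},j})$ and is therefore a $B$-free $B$-valued semi-circular system associated with $(\eta_{j})$. To establish the uniqueness, I would iterate Lemma \ref{Lem_spann_BChebyshev} along the maximal constant-index runs of the string $i(1),\dots,i(k)$: within each run, the lemma rewrites the corresponding single-variable factor as a $B$-bimodule combination of $B$ and elements of $\bigcup_{n}\mathrm{ran}(U^{\eta_{i}}_{n})$; expanding the product across the runs and using the partial $B$-balance property noted after Definition \ref{Def_B_chebyshev} to absorb intermediate $B$-insertions into the boundary slots of the Chebyshev factors, the original monomial is expressed as the sum of an element of $B$ and a $\mathbb{C}$-linear combination of alternating Chebyshev products of the precise form appearing in (2). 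Condition (2) then kills all alternating-product contributions, leaving a $B$-valued quantity that depends only on $(\eta_{j})$ and the monomial, not on the ambient probability space.

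The main obstacle I expect is the multivariate bookkeeping: verifying that the iterated rewriting above really produces Chebyshev products with \emph{alternating} indices, rather than forms with residual $B$-elements sandwiched between consecutive Chebyshev factors. A cleaner route, which I would take if the manual expansion becomes cumbersome, is to bypass the iteration entirely and derive the multivariate spanning statement directly from Proposition \ref{Prop_BChebyshev_tensor}: identifying $\mathcal{F}$ with $B\langle X_{1},\dots,X_{d}\rangle$ as a vector space, the proposition exhibits every pure monomial in $\mathcal{F}\ominus B$ as the image of an alternating Chebyshev product acting on $1$, so $B$ together with such products spans $B\langle X_{1},\dots,X_{d}\rangle$ and (2)$\Rightarrow$(1) follows immediately.
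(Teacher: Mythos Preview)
Your (1)$\Rightarrow$(2) is exactly the paper's argument.

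For (2)$\Rightarrow$(1), the paper takes a different and tidier route than either of yours. Rather than showing that condition (2) determines all mixed moments, the paper verifies the two ingredients of Definition~\ref{Def_Bsemicircular} separately. First, for each $j$ it applies Lemma~\ref{Lem_spann_BChebyshev} in the single variable $X_{j}$ to write any $P(X_{j})\in B\langle X_{j}\rangle$ as $b+\sum U^{\eta_{j}}_{n}(\cdots)(X_{j})$; condition (2) with $k=1$ gives $E[P(S_{j})]=b$, and the same computation in the Fock model gives $E_{\mathcal F}[P(S_{\mathcal F,j})]=b$, so $S_{j}$ is $B$-valued semi-circular. Second, for $B$-freeness one must check $E[P_{1}(S_{i(1)})\cdots P_{k}(S_{i(k)})]=0$ for alternating $i$ and \emph{centered} $P_{\ell}$. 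The point is that once $E[P_{\ell}(S_{i(\ell)})]=0$, the constant $b$ in the Chebyshev expansion of $P_{\ell}$ vanishes, so each $P_{\ell}$ is a pure $\mathbb C$-linear combination of Chebyshev polynomials in the single variable $X_{i(\ell)}$. Multiplying these out gives precisely a sum of alternating Chebyshev products of the form in (2), with no stray $B$-factors to absorb or adjacent same-index Chebyshev blocks to merge. This completely bypasses the ``multivariate bookkeeping'' you flagged as the main obstacle.

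Your first approach works but, as you anticipate, requires an induction on total degree to handle the case where a $B$-term from one run collapses two neighbouring runs with the same index. Your second ``cleaner route'' has a small gap as stated: knowing that $\Phi\colon P\mapsto P(S_{\mathcal F})[1]$ sends alternating Chebyshev products onto the monomial basis of $\mathcal F$ only shows that $\Phi$ is surjective on the span of such products; to conclude that this span is all of $B\langle X_{1},\dots,X_{d}\rangle$ you need $\Phi$ to be injective. This is true (indeed $\Phi$ is unitriangular with respect to the degree filtration, since each $U_{n}^{\eta}$ has leading term the corresponding monomial), but you should say so.
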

\begin{proof}
    (1)$\Rightarrow$(2): Suppose (1), that is, let $S=(S_{1},\dots,S_{d})$ be a $B$-free $B$-valued semi-circular system associated to $\eta_{1},\dots,\eta_{d}$ in $A$. Then, we have $E[P(S)]=E_{\mathcal{F}}[P(S_{\mathcal{F}})]=\left\langle1, P(S_{\mathcal{F}})[1]\right\rangle_{\mathcal{F}}$ for any $P(X)\in B\langle X_{1},\dots,X_{d}\rangle$.
    In particular, we have
    \begin{align*}
        \,&
        E\left[
        U_{n(1)}^{\eta_{i(1)}}
        \left(
        \begin{smallmatrix}
            b^{(1)}_{1}\\
            b^{(1)'}_{1}
        \end{smallmatrix}
        ;\cdots;
        \begin{smallmatrix}
            b^{(1)}_{n(1)}\\
            b^{(1)'}_{n(1)}
        \end{smallmatrix}
        \right)
        (S_{i(1)})
        \cdots
        U_{n(k)}^{\eta_{i(k)}}
        \left(
        \begin{smallmatrix}
            b^{(k)}_{1}\\
            b^{(k)'}_{1}
        \end{smallmatrix}
        ;\cdots;
        \begin{smallmatrix}
            b^{(k)}_{n(k)}\\
            b^{(k)'}_{n(k)}
         \end{smallmatrix}
        \right)
        (S_{i(k)})
        \right]\\
        &=
        \left\langle
        1, 
        U_{n(1)}^{\eta_{i(1)}}
        \left(
        \begin{smallmatrix}
            b^{(1)}_{1}\\
            b^{(1)'}_{1}
        \end{smallmatrix}
        ;\cdots;
        \begin{smallmatrix}
            b^{(1)}_{n(1)}\\
            b^{(1)'}_{n(1)}
        \end{smallmatrix}
        \right)
        (S_{i(1)})
        \cdots
        U_{n(k)}^{\eta_{i(k)}}
        \left(
        \begin{smallmatrix}
            b^{(k)}_{1}\\
            b^{(k)'}_{1}
        \end{smallmatrix}
        ;\cdots;
        \begin{smallmatrix}
            b^{(k)}_{n(k)}\\
            b^{(k)'}_{n(k)}
         \end{smallmatrix}
        \right)
        (S_{i(k)})
        [1]\right\rangle_{\mathcal{F}}
    \end{align*}
    for any $k\in\mathbb{N}$, $n\in I(k,\mathbb{N})$, any $i\in \mathrm{Alt}(I(k,d))$ and any $(b^{(j)}_{\ell},b^{(j)'}_{\ell})\in B\times B$ ($j\in[k]$ and $\ell\in[n(j)]$).
    By Propositions \ref{Prop_BChebyshev_tensor} and the definition of the $B$-valued pre-inner product $\langle\cdot,\cdot\rangle_{\mathcal{F}}$, the right-hand side is equal to
    \[
    \left\langle
    1,
    \bigl((b^{(1)}_{1}X_{i(1)}b^{(1)'}_{1})\cdots(b^{(1)}_{n(1)}X_{i(1)}b^{(1)'}_{n(1)})\bigr)
    \cdots
    \bigl((b^{(k)}_{1}X_{i(k)}b^{(k)'}_{1})\cdots(b^{(k)}_{n(k)}X_{i(k)}b^{(k)'}_{n(k)})\bigr)
    \right\rangle_{\mathcal{F}}
    =0,
    \]
    and hence we have obtained that
    \[
    E\left[
        U_{n(1)}^{\eta_{i(1)}}
        \left(
        \begin{smallmatrix}
            b^{(1)}_{1}\\
            b^{(1)'}_{1}
        \end{smallmatrix}
        ;\cdots;
        \begin{smallmatrix}
            b^{(1)}_{n(1)}\\
            b^{(1)'}_{n(1)}
        \end{smallmatrix}
        \right)
        (S_{i(1)})
        \cdots
        U_{n(k)}^{\eta_{i(k)}}
        \left(
        \begin{smallmatrix}
            b^{(k)}_{1}\\
            b^{(k)'}_{1}
        \end{smallmatrix}
        ;\cdots;
        \begin{smallmatrix}
            b^{(k)}_{n(k)}\\
            b^{(k)'}_{n(k)}
         \end{smallmatrix}
        \right)
        (S_{i(k)})
    \right]=0.
    \]

    (2)$\Rightarrow$(1):
    Suppose (2). By Lemma \ref{Lem_spann_BChebyshev}, for any $P(X_{j})\in B\langle X_{j}\rangle$, $j\in[d]$, there exists $n\in I(k,\mathbb{N})$ and elements $b,(b^{(j)}_{i_{j}},b^{(j)'}_{i_{j}}\,|\,j\in[k],i_{j}\in[n(j)])$ of $B$ such that
    \[
    P(X_{j})
    =
    b+
    \sum_{j=1}^{k}
    U_{n(j)}^{\eta_{j}}
    \left(
    \begin{smallmatrix}
        b^{(j)}_{1}\\
        b^{(j)'}_{1}
    \end{smallmatrix}
    ;\cdots;
    \begin{smallmatrix}
        b^{(j)}_{n(j)}\\
        b^{(j)'}_{n(j)}
    \end{smallmatrix}
    \right)
    (X_{j}).
    \]
    By condition (2), we have
    \begin{align*}
        E\left[
        P(S_{j})
        \right]
        &=
        b+
        \sum_{j=1}^{k}
        E
        \left[
        U_{n(j)}
        \left(
        \begin{smallmatrix}
            b^{(j)}_{1}\\
            b^{(j)'}_{1}
        \end{smallmatrix}
        ;\cdots;
        \begin{smallmatrix}
            b^{(j)}_{n(j)}\\
            b^{(j)'}_{n(j)}
        \end{smallmatrix}
        \right)
        (S_{j})
        \right]
        =b.
    \end{align*}
    On the other hand, by Proposition \ref{Prop_BChebyshev_tensor}, we also have
    \[
    E_{\mathcal{F}}
    \left[
    P(S_{\mathcal{F},j})
    \right]
    =b+
    \sum_{j=1}^{k}
    \left\langle
    1,
    U_{n(j)}
    \left(
    \begin{smallmatrix}
        b^{(j)}_{1}\\
        b^{(j)'}_{1}
    \end{smallmatrix}
    ;\cdots;
    \begin{smallmatrix}
        b^{(j)}_{n(j)}\\
        b^{(j)'}_{n(j)}
    \end{smallmatrix}
    \right)
    (S_{\mathcal{F},j})[1]
    \right\rangle_{\mathcal{F}}
    =b,
    \]
    that is, $E\left[P(S_{j})\right]=E_{\mathcal{F}}\left[P(S_{\mathcal{F},j})\right]$ for all $P(X_{j})\in B\langle X_{j}\rangle$. Since $S_{\mathcal{F},j}$ is a $B$-valued semi-circular element, so is $S_{j}$, $j\in[d]$. The rest is to show the $B$-freeness of $(S_{j})_{j\in[d]}$.
    
    Let $P_{1}(X_{i(1)}),\dots,P_{k}(X_{i(k)})$ be elements in $B_{\langle d\rangle}$ such that $E[P_{j}(S_{i(j)})]=0$ with $i(j)\not=i(j+1)$ ($j\in[k-1]$), that is, $i\in\mathrm{Alt}(I(k,d))$. What we have shown above says that
    \begin{align*}
        P_{j}(X_{i(j)})
        =
        \sum_{1\leq \ell\leq N(j)}
        U^{\eta_{i(j)}}_{n_{j}(\ell)}
        \left(
        \begin{smallmatrix}
            b^{(\ell,j)}_{1}\\
            b^{(\ell,j)'}_{1}
        \end{smallmatrix}
        ;\cdots;
        \begin{smallmatrix}
            b^{(\ell,j)}_{n_{j}(\ell)}\\
            b^{(\ell,j)'}_{n_{j}(\ell)}
        \end{smallmatrix}
        \right)
        (X_{i(j)})
    \end{align*}
    for some $N\in I(k,\mathbb{N})$, some $n_{j}(\cdot)\in I(N(j),\mathbb{N})$ with $n_{j}(\ell)\leq\mathrm{deg}(P_{j})$ ($\ell\in[N(j)]$) and elements $\{b^{(\ell,j)}_{t}\,|\,j\in[k],\ell\in[N(j)],t\in[n_{j}(\ell)]\}$ of $B$. 
    Then, we have 
    \begin{align*}
        \,&
        P_{1}(X_{i(1)})\cdots P_{k}(X_{i(k)})\\
        &=
        \sum_{\substack{1\leq \ell_{1}\leq N(1)\\\vdots\\1\leq \ell_{k}\leq N(k)}}
        U^{\eta_{i(1)}}_{n_{1}(\ell_{1})}
        \left(
        \begin{smallmatrix}
            b^{(\ell,1)}_{1}\\
            b^{(\ell,1)'}_{1}
        \end{smallmatrix}
        ;\cdots;
        \begin{smallmatrix}
            b^{(\ell,1)}_{n_{1}(\ell_{1})}\\
            b^{(\ell,1)'}_{n_{1}(\ell_{1})}
        \end{smallmatrix}
        \right)
        (X_{i(1)})
        \cdots
        U^{\eta_{i(k)}}_{n_{k}(\ell_{k})}
        \left(
        \begin{smallmatrix}
            b^{(\ell,k)}_{1}\\
            b^{(\ell,k)'}_{1}
        \end{smallmatrix}
        ;\cdots;
        \begin{smallmatrix}
            b^{(\ell,k)}_{n_{k}(\ell_{k})}\\
            b^{(\ell,k)'}_{n_{k}(\ell_{k})}
        \end{smallmatrix}
        \right)
        (X_{i(k)}).
    \end{align*}
    Since $n_{1}(\ell_{1}),\dots,n_{k}(\ell_{k})\geq1$ and $i\in \mathrm{Alt}(I(k,d))$, we have
    \begin{align*}
        \,&
        E\left[P_{1}(S_{i(1)})\cdots P_{k}(S_{i(k)})\right]\\
        &=
        \sum_{\substack{1\leq \ell_{1}\leq N(1)\\\vdots\\1\leq \ell_{k}\leq N(k)}}
        E\left[
        U^{\eta_{i(1)}}_{n_{1}(\ell_{1})}
        \left(
        \begin{smallmatrix}
            b^{(\ell,1)}_{1}\\
            b^{(\ell,1)'}_{1}
        \end{smallmatrix}
        ;\cdots;
        \begin{smallmatrix}
            b^{(\ell,1)}_{n_{1}(\ell_{1})}\\
            b^{(\ell,1)'}_{n_{1}(\ell_{1})}
        \end{smallmatrix}
        \right)
        (S_{i(1)})
        \cdots
        U^{\eta_{i(k)}}_{n_{k}(\ell_{k})}
        \left(
        \begin{smallmatrix}
            b^{(\ell,k)}_{1}\\
            b^{(\ell,k)'}_{1}
        \end{smallmatrix}
        ;\cdots;
        \begin{smallmatrix}
            b^{(\ell,k)}_{n_{k}(\ell_{k})}\\
            b^{(\ell,k)'}_{n_{k}(\ell_{k})}
        \end{smallmatrix}
        \right)
        (S_{i(k)})
        \right]\\
        &=0
    \end{align*}
    by assumption (2). Therefore, $(S_{j})_{j\in[d]}$ are $B$-freely independent.
\end{proof}

\section{The conjugate variable associated with $\eta$ of $B$-valued semi-circular element}\label{section_conjugate}

In the rest of this paper, we will work in the following setting: Let $B\subset A$ be a unital inclusion of unital $C^*$-algebras with a faithful tracial state $\tau$ on $A$ and a $\tau$-preserving conditional expectation $E$ from $A$ onto $B$, which are denoted by $(A,B,\tau,E)$. Let $\eta$ be a completely positive map from $B$ to $B$.

Here, we will give some new notations. We denote by $\langle\cdot,\cdot\rangle_{\tau}$ the inner product on $A$ defined by $\left\langle a_{1},a_{2}\right\rangle_{\tau}=\tau\left(a_{1}^*a_{2}\right)$ for all $a_{1},a_{2}\in A$ and $\|a\|_{\tau}=\langle a,a\rangle_{\tau}^{\frac{1}{2}}$ for all $a\in A$. We denote by $\langle\cdot,\cdot\rangle_{\eta}$ the pre-inner product on $A\otimes_{\mathrm{alg}} A$ given by
\[
\left\langle
a_{1}\otimes a_{2},
a_{3}\otimes a_{4}
\right\rangle_{\eta}
=\tau\left(
a_{2}^*
\eta\left(E\left[a_{1}^*a_{3}\right]\right)a_{4}
\right)
\]
for any $a_{1},a_{2},a_{3},a_{4}\in A$ and $\|\xi\|_{\eta}=\langle\xi,\xi\rangle_{\eta}^{\frac{1}{2}}$ for all $\xi\in A\otimes_{\mathrm{alg}}A$. (The positivity of $\langle\cdot,\cdot\rangle_{\eta}$ follows from the complete positivity of $\eta$.) Its Hilbert space separation is also denoted by $\langle\cdot,\cdot\rangle_{\eta}$.

The following proposition is a $B$-valued semi-circular analogue of Stein's equation. This is essentially same as \cite[Proposition 3.10]{sh00} and is characterized in terms of Speicher's $B$-valued free cumulants \cite[Example 2.5]{l24}, but we will give a different proof as a warm-up exercises of the use of the $B$-valued Chebyshev family. 

\begin{Prop}\label{Prop_Bsemicircular_Stein}
    Let $\eta_{1},\dots,\eta_{d}$ be completely positive maps from $B$ to $B$. Let $(S_{1},\dots,S_{d})$ be a $B$-free $B$-valued semi-circular system associated with $\eta_{1},\dots,\eta_{d}$ in $(A,B,E,\tau)$. Then, we have
    \[
    \left\langle
    S_{j},P(S_{1},\dots,S_{d})
    \right\rangle_{\tau}
    =
    \left\langle
    1\otimes1,
    \mathrm{ev}_{S}^{\otimes2}
    \left(
    \partial_{j}\left[P(X_{1},\dots,X_{d})\right]
    \right)
    \right\rangle_{\eta_{j}}
    \]
    for any $j\in[d]$ and $P(X_{1},\dots,X_{d})\in B\langle X_{1},\dots,X_{d}\rangle$.
\end{Prop}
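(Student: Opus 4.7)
The plan is to reduce the identity to a spanning set via the $B$-valued Chebyshev expansion. By an iterated application of Lemma~\ref{Lem_spann_BChebyshev} in each variable, the space $B\langle X_{1},\dots,X_{d}\rangle$ is linearly spanned by $B$ together with alternating Chebyshev products
\[
P(X)=U_{n_{1}}^{\eta_{i_{1}}}(\vec{b}_{1})(X_{i_{1}})\cdots U_{n_{k}}^{\eta_{i_{k}}}(\vec{b}_{k})(X_{i_{k}}),\qquad k\geq1,\ n_{s}\geq1,\ i\in\mathrm{Alt}(I(k,d)).
\]
Since both sides of the claim are linear in $P$, it suffices to verify them on these two classes. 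For $P\in B$ the verification is immediate: the LHS is $\tau(E[S_{j}]P)=0$ and the RHS is $\langle1\otimes1,0\rangle_{\eta_{j}}=0$.

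For a Chebyshev product $P$, the plan is to use the identity $S_{j}=U_{1}^{\eta_{j}}(\begin{smallmatrix}1\\1\end{smallmatrix})(S_{j})$ to analyze the LHS. When $i_{1}\neq j$, the product $S_{j}P(S)$ is itself an alternating Chebyshev product of length $k+1$ with all orders $\geq 1$; Proposition~\ref{Prop_Bsemicircular_equivalent}(2) together with $\tau\circ E=\tau$ then gives $\tau(S_{j}P(S))=0$. When $i_{1}=j$, the Chebyshev recursion from Definition~\ref{Def_B_chebyshev} gives
\[
S_{j}\,U_{n_{1}}^{\eta_{j}}(\vec{b}_{1})(S_{j})=U_{n_{1}+1}^{\eta_{j}}\bigl(\begin{smallmatrix}1\\1\end{smallmatrix};\vec{b}_{1}\bigr)(S_{j})+\eta_{j}(b_{1,1})b_{1,1}'\,U_{n_{1}-1}^{\eta_{j}}(\cdots)(S_{j}),
\]
with the convention $U_{0}:=1$. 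After substitution into $P(S)$, the first summand gives an alternating Chebyshev product of length $k$ with leading order $n_{1}+1$ and is killed by $\tau\circ E$; the second summand, with $\eta_{j}(b_{1,1})b_{1,1}'$ absorbed into the partial $B$-balanced first argument of the following Chebyshev factor, is likewise killed by Proposition~\ref{Prop_Bsemicircular_equivalent}(2), except when $k=1$ and $n_{1}=1$, where LHS$\,=\tau(\eta_{j}(b_{1,1})b_{1,1}')$.

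For the RHS, the Leibniz rule together with $\partial_{j}[U_{n_{s}}^{\eta_{i_{s}}}(\vec{b}_{s})(X_{i_{s}})]=0$ for $i_{s}\neq j$ yields
\[
\partial_{j}[P]=\sum_{l:\,i_{l}=j}\Bigl(\prod_{s<l}U_{n_{s}}^{\eta_{i_{s}}}(\vec{b}_{s})(X_{i_{s}})\Bigr)\cdot\partial\bigl[U_{n_{l}}^{\eta_{j}}(\vec{b}_{l})\bigr](X_{j})\cdot\Bigl(\prod_{s>l}U_{n_{s}}^{\eta_{i_{s}}}(\vec{b}_{s})(X_{i_{s}})\Bigr),
\]
and Proposition~\ref{Prop_fromula_fdq_Chebyshev} expands each $\partial[U_{n_{l}}^{\eta_{j}}]$ as a sum of tensors $\xi_{1}\otimes\xi_{2}$ with $\xi_{1}$ either an element $b_{l,1}\in B$ or a Chebyshev polynomial in $X_{j}$ of positive order (and similarly for $\xi_{2}$). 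After evaluating at $S$ and pairing with $1\otimes1$ via $\langle1\otimes1,a\otimes b\rangle_{\eta_{j}}=\tau(\eta_{j}(E[a])b)$, the factor $E[a]=E\bigl[\prod_{s<l}U_{n_{s}}(S_{i_{s}})\cdot\xi_{1}(S_{j})\bigr]$ is, thanks to $i\in\mathrm{Alt}$ and $i_{l-1}\neq i_{l}=j$, the $E$-expectation of an alternating Chebyshev product with all orders $\geq 1$; Proposition~\ref{Prop_Bsemicircular_equivalent}(2) then forces $E[a]=0$ except in the boundary case $l=1$ and $\xi_{1}=b_{1,1}\in B$. A parallel analysis of the factor $b$ shows that even this surviving term is zero unless $n_{1}=1$ and $k=1$, and in that unique case RHS$\,=\tau(\eta_{j}(b_{1,1})b_{1,1}')$, matching the LHS. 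The main obstacle will be the careful alignment of the Chebyshev recursion on the LHS with the expansion of Proposition~\ref{Prop_fromula_fdq_Chebyshev} on the RHS so as to isolate the single surviving boundary contribution on both sides.
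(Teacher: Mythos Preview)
Your proposal is correct and follows essentially the same route as the paper's own proof: reduction to alternating Chebyshev products via Lemma~\ref{Lem_spann_BChebyshev}, analysis of the LHS by the Chebyshev recursion together with Proposition~\ref{Prop_Bsemicircular_equivalent}, and analysis of the RHS via Proposition~\ref{Prop_fromula_fdq_Chebyshev} and again Proposition~\ref{Prop_Bsemicircular_equivalent}, with both sides collapsing to the single boundary contribution $\delta_{j,i_{1}}\tau(\eta_{j}(b_{1,1})b_{1,1}')$ when $k=1$ and $n_{1}=1$. The case distinctions and the way the vanishing is extracted from $E$-expectations of alternating Chebyshev products match the paper line by line.
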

\begin{proof}
    By Lemma \ref{Lem_spann_BChebyshev}, it suffices to see the case when 
    \begin{align*}
        \,&P(X_{1},\dots,X_{d})
        =
        U^{\eta_{i(1)}}_{n(1)}
        \left(
        \begin{smallmatrix}
            b^{(1)}_{1}\\
            b^{(1)'}_{1}
        \end{smallmatrix}
        ;\cdots;
        \begin{smallmatrix}
            b^{(1)}_{n(1)}\\
            b^{(1)'}_{n(1)}
        \end{smallmatrix}
        \right)
        (X_{i(1)})
        \cdots
        U_{n(k)}^{\eta_{i(k)}}
        \left(
        \begin{smallmatrix}
            b^{(k)}_{1}\\
            b^{(k)'}_{1}
        \end{smallmatrix}
        ;\cdots;
        \begin{smallmatrix}
            b^{(k)}_{n(k)}\\
            b^{(k)'}_{n(k)}
         \end{smallmatrix}
        \right)
        (X_{i(k)})
    \end{align*}
    for any $k\in\mathbb{N}$, any $b_{1},\dots,b_{k}\in B$, $n\in I(k,\mathbb{N})$ and $i\in \mathrm{Alt}(I(k,d))$.
    We observe that
    \begin{align*}
        \,&
        \left\langle
        S_{j},P(S_{1},\dots,S_{d})
        \right\rangle_{\tau}\\
        &=
        \tau\left(
        U_{1}^{\eta_{j}}
        \left(
        \begin{smallmatrix}
            1\\
            1
        \end{smallmatrix}
        \right)
        (S_{j})
        U_{n(1)}^{\eta_{i(1)}}
        \left(
        \begin{smallmatrix}
            b^{(1)}_{1}\\
            b^{(1)'}_{1}
        \end{smallmatrix}
        ;\cdots;
        \begin{smallmatrix}
            b^{(1)}_{n(1)}\\
            b^{(1)'}_{n(1)}
        \end{smallmatrix}
        \right)
        (S_{i(1)})
        \cdots
        U_{n(k)}^{\eta_{i(k)}}
        \left(
        \begin{smallmatrix}
            b^{(k)}_{1}\\
            b^{(k)'}_{1}
        \end{smallmatrix}
        ;\cdots;
        \begin{smallmatrix}
            b^{(k)}_{n(k)}\\
            b^{(k)'}_{n(k)}
         \end{smallmatrix}
        \right)
        (S_{i(k)})
        \right).
    \end{align*}
    Assume that $k\geq2$ or $n(1)\geq2$. If $j\not=i(1)$, then the right-hand side is equal to $0$ by Proposition \ref{Prop_Bsemicircular_equivalent}. If $j=i(1)$, then the recursion formula in Definition \ref{Def_B_chebyshev} and Proposition \ref{Prop_Bsemicircular_equivalent} enable us to confirm that the right-hand side is equal to
    \begin{align*}
        \,&
        \tau\left(
        U_{n(1)+1}^{\eta_{i(1)}}
        \left(
        \begin{smallmatrix}
            1\\
            1
        \end{smallmatrix}
        ;
        \begin{smallmatrix}
            b^{(1)}_{1}\\
            b^{(1)'}_{1}
        \end{smallmatrix}
        ;\cdots;
        \begin{smallmatrix}
            b^{(1)}_{n(1)}\\
            b^{(1)'}_{n(1)}
        \end{smallmatrix}
        \right)
        (S_{i(1)})
        \cdots
        U_{n(k)}^{\eta_{i(k)}}
        \left(
        \begin{smallmatrix}
            b^{(k)}_{1}\\
            b^{(k)'}_{1}
        \end{smallmatrix}
        ;\cdots;
        \begin{smallmatrix}
            b^{(k)}_{n(k)}\\
            b^{(k)'}_{n(k)}
         \end{smallmatrix}
        \right)
        (S_{i(k)})
        \right)\\
        &\quad
        +
        \tau\left(
        \eta_{i(1)}
        \left(b^{(1)}_{1}\right)
        b^{(1)'}_{1}
        U_{n(1)-1}^{\eta_{i(1)}}
        \left(
        \begin{smallmatrix}
            b^{(1)}_{2}\\
            b^{(1)'}_{2}
        \end{smallmatrix}
        ;\cdots;
        \begin{smallmatrix}
            b^{(1)}_{n(1)}\\
            b^{(1)'}_{n(1)}
        \end{smallmatrix}
        \right)
        (S_{i(1)})
        \cdots
        U_{n(k)}^{\eta_{i(k)}}
        \left(
        \begin{smallmatrix}
            b^{(k)}_{1}\\
            b^{(k)'}_{1}
        \end{smallmatrix}
        ;\cdots;
        \begin{smallmatrix}
            b^{(k)}_{n(k)}\\
            b^{(k)'}_{n(k)}
         \end{smallmatrix}
        \right)
        (S_{i(k)})
        \right)\\
        &=0.
    \end{align*}
    Thus, $\left\langle S_{j},P(S_{1},\dots,S_{d})\right\rangle_{\tau}=0$ when $k\geq2$ or $n(1)\geq2$. If $k=1$ and $n(1)=1$, then we observe that
    \begin{align*}
        \,&
        \left\langle
        S_{j},
        U_{1}
        \left(
        \begin{smallmatrix}
            b^{(1)}_{1}\\
            b^{(1)'}_{1}
        \end{smallmatrix}
        \right)
        (S_{i(1)})
        \right\rangle_{\tau}=
        \tau\left(
        S_{j}
        U_{1}
        \left(
        \begin{smallmatrix}
            b^{(1)}_{1}\\
            b^{(1)'}_{1}
        \end{smallmatrix}
        \right)
        (S_{i(1)})
        \right)
        =\delta_{j,i(1)}\tau\left(\eta_{i(1)}\left(b^{(1)}_{1}\right)b^{(1)'}_{1}\right)
    \end{align*}
    by $B$-free independence. (Here, note that the variance of $S_{i(1)}$ is given by $\eta_{i(1)}$.)
    Consequently, we have obtained that
    \[
    \left\langle
    S_{j},P(S_{1},\dots,S_{d})
    \right\rangle_{\tau}
    =
    \begin{cases}
        0;\quad \mbox{if }k\geq2\mbox{ or }n(1)\geq2,\\
        \delta_{j,i(1)}\tau\left(\eta_{i(1)}\left(b^{(1)}_{1}\right)b^{(1)'}_{1}\right);\quad \mbox{if }k=1\mbox{ and }n(1)=1.
    \end{cases}
    \]

    On the other hand, by Proposition \ref{Prop_fromula_fdq_Chebyshev}, we observe that
    \begin{align*}
        \,&
        \left\langle
        1\otimes1,
        \mathrm{ev}_{S}^{\otimes2}
        \left(
        \partial_{j}\left[P(X_{1},\dots,X_{d})\right]
        \right)
        \right\rangle_{\eta_{j}}\\
        &=
        \sum_{\ell=1}^{k}
        \delta_{j,i(\ell)}
        \Biggl\langle
        1\otimes1,
        \left(
        U_{n(1)}^{\eta_{i(1)}}
        \left(
        \begin{smallmatrix}
            b^{(1)}_{1}\\
            b^{(1)'}_{1}
        \end{smallmatrix}
        ;\cdots;
        \begin{smallmatrix}
            b^{(1)}_{n(1)}\\
            b^{(1)'}_{n(1)}
        \end{smallmatrix}
        \right)
        (S_{i(1)})
        \cdots
        U_{n(\ell-1)}^{\eta_{i(\ell-1)}}
        \left(
        \begin{smallmatrix}
            b^{(\ell-1)}_{1}\\
            b^{(\ell-1)'}_{1}
        \end{smallmatrix}
        ;\cdots;
        \begin{smallmatrix}
            b^{(\ell-1)}_{n(\ell-1)}\\
            b^{(\ell-1)}_{n(\ell-1)}
        \end{smallmatrix}
        \right)
        (S_{i(\ell-1)})
        \otimes1
        \right)\\
        &\quad
        \times\mathrm{ev}_{S}^{\otimes2}
        \left(
        \partial_{j}
        \left[
        U_{n(\ell)}^{\eta_{i(\ell)}}
        \left(
        \begin{smallmatrix}
            b^{(\ell)}_{1}\\
            b^{(\ell)'}_{1}
        \end{smallmatrix}
        ;\cdots;
        \begin{smallmatrix}
            b^{(\ell)}_{n(\ell)}\\
            b^{(\ell)'}_{n(\ell)}
        \end{smallmatrix}
        \right)
        (S_{i(\ell)})
        \right]
        \right)\\
        &\quad
        \times\left(
        1\otimes
        U_{n(\ell+1)}^{\eta_{i(\ell+1)}}
        \left(
        \begin{smallmatrix}
            b^{(\ell+1)}_{1}\\
            b^{(\ell+1)'}_{1}
        \end{smallmatrix}
        ;\cdots;
        \begin{smallmatrix}
            b^{(\ell+1)}_{n(\ell+1)}\\
            b^{(\ell+1)}_{n(\ell+1)}
        \end{smallmatrix}
        \right)
        (S_{i(\ell+1)})
        \cdots
        U_{n(k)}^{\eta_{i(k)}}
        \left(
        \begin{smallmatrix}
            b^{(k)}_{1}\\
            b^{(k)'}_{1}
        \end{smallmatrix}
        ;\cdots;
        \begin{smallmatrix}
            b^{(k)}_{n(k)}\\
            b^{(k)'}_{n(k)}
        \end{smallmatrix}
        \right)
        (S_{i(k)})
        \right)
        \Biggr\rangle_{\eta_{j}}\\
        &
        =\sum_{\ell=1}^{k}
        \delta_{j,i(\ell)}
        \Biggl\langle
        1\otimes1,
        U_{n(1)}^{\eta_{i(1)}}
        \left(
        \begin{smallmatrix}
            b^{(1)}_{1}\\
            b^{(1)'}_{1}
        \end{smallmatrix}
        ;\cdots;
        \begin{smallmatrix}
            b^{(1)}_{n(1)}\\
            b^{(1)'}_{n(1)}
        \end{smallmatrix}
        \right)
        (S_{i(1)})
        \cdots
        U_{n(\ell-1)}^{\eta_{i(\ell-1)}}
        \left(
        \begin{smallmatrix}
            b^{(\ell-1)}_{1}\\
            b^{(\ell-1)'}_{1}
        \end{smallmatrix}
        ;\cdots;
        \begin{smallmatrix}
            b^{(\ell-1)}_{n(\ell-1)}\\
            b^{(\ell-1)}_{n(\ell-1)}
        \end{smallmatrix}
        \right)
        (S_{i(\ell-1)})b^{(\ell)}_{1}\\
        &\quad\quad\quad\otimes
        U_{n(\ell)-1}^{\eta_{i(\ell)}}
        \left(
        \begin{smallmatrix}
            b^{(\ell)'}_{1}b^{(\ell)}_{2}\\
            b^{(\ell)'}_{2}
        \end{smallmatrix}
        ;\cdots;
        \begin{smallmatrix}
            b^{(\ell)}_{n(\ell)}\\
            b^{(\ell)'}_{n(\ell)}
        \end{smallmatrix}
        \right)
        (S_{i(\ell)})
        U_{n(\ell+1)}^{\eta_{i(\ell+1)}}
        \left(
        \begin{smallmatrix}
            b^{(\ell+1)}_{1}\\
            b^{(\ell+1)'}_{1}
        \end{smallmatrix}
        ;\cdots;
        \begin{smallmatrix}
            b^{(\ell+1)}_{n(\ell+1)}\\
            b^{(\ell+1)}_{n(\ell+1)}
        \end{smallmatrix}
        \right)
        (S_{i(\ell+1)})\\
        &\hspace{8cm}
        \cdots
        U_{n(k)}^{\eta_{i(k)}}
        \left(
        \begin{smallmatrix}
            b^{(k)}_{1}\\
            b^{(k)'}_{1}
        \end{smallmatrix}
        ;\cdots;
        \begin{smallmatrix}
            b^{(k)}_{n(k)}\\
            b^{(k)'}_{n(k)}
        \end{smallmatrix}
        \right)
        (S_{i(k)})
        \Biggr\rangle_{\eta_{j}}\\
        &\quad+
        \sum_{\ell=1}^{k}
        \delta_{j,i(\ell)}
        \sum_{m=2}^{n(\ell)-1}
        \Biggl\langle
        1\otimes1,
        U_{n(1)}^{\eta_{i(1)}}
        \left(
        \begin{smallmatrix}
            b^{(1)}_{1}\\
            b^{(1)'}_{1}
        \end{smallmatrix}
        ;\cdots;
        \begin{smallmatrix}
            b^{(1)}_{n(1)}\\
            b^{(1)'}_{n(1)}
        \end{smallmatrix}
        \right)
        (S_{i(1)})\\
        &\quad\cdots
        U_{n(\ell-1)}^{\eta_{i(\ell-1)}}
        \left(
        \begin{smallmatrix}
            b^{(\ell-1)}_{1}\\
            b^{(\ell-1)'}_{1}
        \end{smallmatrix}
        ;\cdots;
        \begin{smallmatrix}
            b^{(\ell-1)}_{n(\ell-1)}\\
            b^{(\ell-1)'}_{n(\ell-1)}
        \end{smallmatrix}
        \right)
        (S_{i(\ell-1)})
        U_{m-1}
        \left(
        \begin{smallmatrix}
            b^{(\ell)}_{1}\\
            b^{(\ell)'}_{1}
        \end{smallmatrix}
        ;\cdots;
        \begin{smallmatrix}
            b^{(\ell)}_{m-1}\\
            b^{(\ell)'}_{m-1}b^{(\ell+1)}_{m}
        \end{smallmatrix}
        \right)
        (S_{i(\ell)})\\
        &\quad\quad\quad\otimes
        U_{n(\ell)-m}^{\eta_{i(\ell)}}
        \left(
        \begin{smallmatrix}
            b^{(\ell)'}_{m}b^{(\ell)}_{m+1}\\
            b^{(\ell)'}_{m+1}
        \end{smallmatrix}
        ;\cdots;
        \begin{smallmatrix}
            b^{(\ell)}_{n(\ell)}\\
            b^{(\ell)'}_{n(\ell)}
        \end{smallmatrix}
        \right)
        (S_{i(\ell)})
        U_{n(\ell+1)}^{\eta_{i(\ell+1)}}
        \left(
        \begin{smallmatrix}
            b^{(\ell+1)}_{1}\\
            b^{(\ell+1)'}_{1}
        \end{smallmatrix}
        ;\cdots;
        \begin{smallmatrix}
            b^{(\ell+1)}_{n(\ell+1)}\\
            b^{(\ell+1)}_{n(\ell+1)}
        \end{smallmatrix}
        \right)
        (S_{i(\ell+1)})\\
        &\hspace{8cm}
        \cdots
        U_{n(k)}^{\eta_{i(k)}}
        \left(
        \begin{smallmatrix}
            b^{(k)}_{1}\\
            b^{(k)'}_{1}
        \end{smallmatrix}
        ;\cdots;
        \begin{smallmatrix}
            b^{(k)}_{n(k)}\\
            b^{(k)'}_{n(k)}
        \end{smallmatrix}
        \right)
        (S_{i(k)})
        \Biggr\rangle_{\eta_{j}}\\
        &\quad+
        \sum_{\ell=1}^{k}
        \delta_{j,i(\ell)}
        \Biggl\langle
        1\otimes1,
        U_{n(1)}^{\eta_{i(1)}}
        \left(
        \begin{smallmatrix}
            b^{(1)}_{1}\\
            b^{(1)'}_{1}
        \end{smallmatrix}
        ;\cdots;
        \begin{smallmatrix}
            b^{(1)}_{n(1)}\\
            b^{(1)'}_{n(1)}
        \end{smallmatrix}
        \right)
        (S_{i(1)})\\
        &\quad\quad\cdots
        U_{n(\ell-1)}^{\eta_{i(\ell-1)}}
        \left(
        \begin{smallmatrix}
            b^{(\ell-1)}_{1}\\
            b^{(\ell-1)'}_{1}
        \end{smallmatrix}
        ;\cdots;
        \begin{smallmatrix}
            b^{(\ell-1)}_{n(\ell-1)}\\
            b^{(\ell-1)'}_{n(\ell-1)}
        \end{smallmatrix}
        \right)
        (S_{i(\ell-1)})
        U^{\eta_{i(\ell)}}_{n(\ell)-1}
        \left(
        \begin{smallmatrix}
            b^{(\ell)}_{1}\\
            b^{(\ell)'}_{1}
        \end{smallmatrix}
        ;\cdots;
        \begin{smallmatrix}
            b^{(\ell)}_{n(\ell)-1}\\
            b^{(\ell)'}_{n(\ell)-1}b^{(\ell)}_{n(\ell)}
        \end{smallmatrix}
        \right)
        (S_{i(\ell)})\\
        &\quad\quad\quad\otimes
        b^{(\ell)'}_{n(\ell)}
        U_{n(\ell+1)}^{\eta_{i(\ell+1)}}
        \left(
        \begin{smallmatrix}
            b^{(\ell+1)}_{1}\\
            b^{(\ell+1)'}_{1}
        \end{smallmatrix}
        ;\cdots;
        \begin{smallmatrix}
            b^{(\ell+1)}_{n(\ell+1)}\\
            b^{(\ell+1)'}_{n(\ell+1)}
        \end{smallmatrix}
        \right)
        (S_{i(\ell+1)})
        \cdots
        U_{n(k)}^{\eta_{i(k)}}
        \left(
        \begin{smallmatrix}
            b^{(k)}_{1}\\
            b^{(k)'}_{1}
        \end{smallmatrix}
        ;\cdots;
        \begin{smallmatrix}
            b^{(k)}_{n(k)}\\
            b^{(k)'}_{n(k)}
        \end{smallmatrix}
        \right)
        (S_{i(k)})
        \Biggr\rangle_{\eta_{j}}.
    \end{align*}
    When $k\geq2$ or $n(1)\geq2$, all terms of the (most) right-hand side above are equal to $0$ by Proposition \ref{Prop_Bsemicircular_equivalent}. If $k=1$ and $n(1)=1$, then the right-hand side is equal to
    \begin{align*}
        \,&
        \delta_{j,i(i)}
        \left\langle
        1\otimes1,
        b^{(1)}_{1}\otimes b^{(1)'}_{1}
        \right\rangle_{\eta_{i(1)}}
        =
        \delta_{j,i(1)}
        \tau\left(
        \eta_{i(1)}\left(b^{(1)}_{1}\right)
        b^{(1)'}_{1}
        \right).
    \end{align*}
    Thus, we have obtained that 
    \[
    \left\langle
    1\otimes1,
    \mathrm{ev}_{S}^{\otimes2}
    \left(
    \partial_{j}\left[P(X_{1},\dots,X_{d})\right]
    \right)
    \right\rangle_{\eta_{j}}
    =
    \begin{cases}
        0;\quad \mbox{if }k\geq2\mbox{ or }n(1)\geq2,\\
        \delta_{j,i(1)}\tau\left(\eta_{i(1)}\left(b^{(1)}_{1}\right)b^{(1)'}_{1}\right);\quad \mbox{if }k=1\mbox{ and }n(1)=1.
    \end{cases}
    \]
   Therefore, we have proved that
   \[
    \left\langle
    S_{j},P(S_{1},\dots,S_{d})
    \right\rangle_{\tau}
    =
    \left\langle
    1\otimes1,
    \mathrm{ev}_{S}^{\otimes2}
    \left(
    \partial_{j}\left[P(X_{1},\dots,X_{d})\right]
    \right)
    \right\rangle_{\eta_{j}},
    \]
    where
    \begin{align*}
        \,&P(X_{1},\dots,X_{d})
        =
        U^{\eta_{i(1)}}_{n(1)}
        \left(
        \begin{smallmatrix}
            b^{(1)}_{1}\\
            b^{(1)'}_{1}
        \end{smallmatrix}
        ;\cdots;
        \begin{smallmatrix}
            b^{(1)}_{n(1)}\\
            b^{(1)'}_{n(1)}
        \end{smallmatrix}
        \right)
        (X_{i(1)})
        \cdots
        U_{n(k)}^{\eta_{i(k)}}
        \left(
        \begin{smallmatrix}
            b^{(k)}_{1}\\
            b^{(k)'}_{1}
        \end{smallmatrix}
        ;\cdots;
        \begin{smallmatrix}
            b^{(k)}_{n(k)}\\
            b^{(k)'}_{n(k)}
         \end{smallmatrix}
        \right)
        (X_{i(k)})
    \end{align*}
    for any $k\in\mathbb{N}$, any $b_{1},\dots,b_{k}\in B$, $n\in I(k,\mathbb{N})$ and $i\in \mathrm{Alt}(I(k,d))$.
\end{proof}

\section{The divergence operator with respect to $B$-valued semi-circular element}\label{section_divergence}
Let $(A,B,\tau,E)$ be the tuple introduced in the previous section, that is, $B\subset A$ is a unital inclusion of unital $C^*$-algebras with a faithful tracial state $\tau$ and a $\tau$-preserving conditional expectation $E$, and $\eta_{1},\dots,\eta_{d}$ completely positive maps from $B$ to $B$. Let $(S_{1},\dots,S_{d})$ be a $B$-free $B$-valued semi-circular system associated with $\eta_{1},\dots,\eta_{d}$ in $(A,B,\tau,E)$.

\begin{Def}\label{Def_divergence}
    Let $\partial_{j}^*$ be a linear map from $B_{\langle d\rangle}\otimes B_{\langle d\rangle}$ to $B_{\langle d\rangle}$ defined by
    \[
    \partial_{j}^*
    =
    m_{X_{j}}
    -
    \#_{1,1}\circ\left(\mathrm{id}_{B_{\langle d\rangle}}\otimes(\eta_{j}\circ E\circ\mathrm{ev}_{S})\otimes\mathrm{id}_{B_{\langle d\rangle}}\right)\circ\left(\mathrm{id}_{B_{\langle d\rangle}}\otimes\partial_{j}+\partial_{j}\otimes\mathrm{id}_{B_{\langle d\rangle}}\right),
    \]
    where $m_{X_{j}}:B_{\langle d\rangle}\otimes B_{\langle d\rangle}\to B_{\langle d\rangle}$ is defined by $m_{X_{j}}(\xi_{1}\otimes\xi_{2})=\xi_{1}X_{j}\xi_{2}$ for all $\xi_{1},\xi_{2}\in B_{\langle d\rangle}$ and $\#_{1,1}:B_{\langle d\rangle}\otimes B_{\langle d\rangle}\otimes B_{\langle d\rangle}\to B_{\langle d\rangle}$ given by $\#_{1,1}(\xi_{1}\otimes\xi_{2}\otimes\xi_{3})=\xi_{1}\xi_{2}\xi_{3}$ for all $\xi_{1},\xi_{2},\xi_{3}\in B_{\langle d\rangle}$. We call $\partial_{j}^*$ the \textit{divergence operator} with respect to $B$-valued semi-circular element $S_{j}$.
\end{Def}

The following lemma can be confirmed by direct calculation (see e.g., \cite[Proposition 3.1]{l24}.

\begin{Lem}\label{Lem_formula_divergence}
    For any $\Xi\in B_{\langle d\rangle}\otimes B_{\langle d\rangle}$ and $a\in B_{\langle d\rangle}$, we have
    \begin{gather*}
        \partial_{j}^*[(a\otimes1)\Xi]=a\cdot\partial_{j}^*[\Xi]-\bigl(\partial_{j}[a],\Xi\bigr)_{j}\quad\mbox{and}\quad
        \partial_{j}^*[\Xi(1\otimes a)]=\partial_{j}^*[\Xi]\cdot a-\bigl(\Xi,\partial_{j}[a]\bigr)_{j},
    \end{gather*}
    where $\bigl(\cdot,\cdot\bigr)_{j}$ denotes a $B_{\langle d\rangle}$-valued bilinear map on $B_{\langle d\rangle}\otimes B_{\langle d\rangle}$ such that
    \[
    \bigl(\xi_{1}\otimes\xi_{2},\xi_{3}\otimes\xi_{4}\bigr)_{j}
    =
    \xi_{1}\eta_{j}(E[\mathrm{ev}_{S}(\xi_{2}\xi_{3})])\xi_{4}
    \]
    for any $\xi_{1},\xi_{2},\xi_{3},\xi_{4}\in B_{\langle d\rangle}$.
\end{Lem}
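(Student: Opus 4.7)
By $\mathbb{C}$-bilinearity both identities reduce to the case of a single elementary tensor $\Xi=\xi_{1}\otimes\xi_{2}\in B_{\langle d\rangle}\otimes B_{\langle d\rangle}$. The only input needed beyond the definition of $\partial_{j}^{*}$ is the Leibniz rule for the free difference quotient: $\partial_{j}[a\xi_{1}] = \partial_{j}[a]\cdot\xi_{1} + a\cdot\partial_{j}[\xi_{1}]$ and $\partial_{j}[\xi_{2}a] = \partial_{j}[\xi_{2}]\cdot a + \xi_{2}\cdot\partial_{j}[a]$, where the $B_{\langle d\rangle}$-bimodule structure on $B_{\langle d\rangle}\otimes B_{\langle d\rangle}$ is the standard one $c\cdot(c_{1}\otimes c_{2})\cdot d = cc_{1}\otimes c_{2}d$.

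\textbf{First identity.} Write (Sweedler-style, with the sum suppressed) $\partial_{j}[a] = a'\otimes a''$, $\partial_{j}[\xi_{1}] = \xi_{1}'\otimes\xi_{1}''$, $\partial_{j}[\xi_{2}] = \xi_{2}'\otimes\xi_{2}''$. Apply the definition of $\partial_{j}^{*}$ to $(a\otimes 1)\Xi = a\xi_{1}\otimes\xi_{2}$. The $m_{X_{j}}$ part gives $a\xi_{1}X_{j}\xi_{2}$. For the second part, $(\mathrm{id}\otimes\partial_{j})[a\xi_{1}\otimes\xi_{2}] = a\xi_{1}\otimes\xi_{2}'\otimes\xi_{2}''$, whereas Leibniz gives $(\partial_{j}\otimes\mathrm{id})[a\xi_{1}\otimes\xi_{2}] = a'\otimes a''\xi_{1}\otimes\xi_{2} + a\xi_{1}'\otimes\xi_{1}''\otimes\xi_{2}$. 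Applying $\#_{1,1}\circ(\mathrm{id}\otimes(\eta_{j}\circ E\circ\mathrm{ev}_{S})\otimes\mathrm{id})$ to these three summands produces, respectively,
\[
a\xi_{1}\,\eta_{j}(E[\mathrm{ev}_{S}(\xi_{2}')])\,\xi_{2}'',\qquad a'\,\eta_{j}(E[\mathrm{ev}_{S}(a''\xi_{1})])\,\xi_{2},\qquad a\xi_{1}'\,\eta_{j}(E[\mathrm{ev}_{S}(\xi_{1}'')])\,\xi_{2}.
\]
The first and third of these, together with $a\xi_{1}X_{j}\xi_{2}$, all carry $a$ as a left factor and reassemble to $a\cdot\partial_{j}^{*}[\Xi]$. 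The leftover middle term is, by definition, $\bigl(\partial_{j}[a],\Xi\bigr)_{j}$. The overall minus sign from $\partial_{j}^{*}$ then yields the first identity.

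\textbf{Second identity.} Entirely symmetric. Working with $\Xi(1\otimes a) = \xi_{1}\otimes\xi_{2}a$, one expands $(\mathrm{id}\otimes\partial_{j})[\xi_{1}\otimes\xi_{2}a]$ via $\partial_{j}[\xi_{2}a] = \xi_{2}'\otimes\xi_{2}''a + \xi_{2}a'\otimes a''$ and factors $a$ out on the right from the three ``main'' terms to reconstruct $\partial_{j}^{*}[\Xi]\cdot a$. The residual term produced by $\xi_{2}\cdot\partial_{j}[a]$, after contraction, is $\xi_{1}\,\eta_{j}(E[\mathrm{ev}_{S}(\xi_{2}a')])\,a'' = \bigl(\Xi,\partial_{j}[a]\bigr)_{j}$, giving the second identity.

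\textbf{Main obstacle.} There is essentially no mathematical obstacle; this is the free-probabilistic avatar of the integration-by-parts formula for the adjoint of a derivation, and the whole content is the Leibniz rule for $\partial_{j}$. The only care required is to keep the bimodule actions on $B_{\langle d\rangle}\otimes B_{\langle d\rangle}$ straight so that, in each of the two identities, exactly one of the two Leibniz summands factors into the ``$a\cdot\partial_{j}^{*}[\Xi]$'' (resp.\ ``$\partial_{j}^{*}[\Xi]\cdot a$'') part and the other becomes the bilinear pairing $(\cdot,\cdot)_{j}$.
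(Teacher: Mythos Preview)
Your proposal is correct and follows essentially the same approach as the paper: the paper simply states that the lemma ``can be confirmed by direct calculation'' (referring to \cite[Proposition 3.1]{l24}), and the (suppressed) computation is exactly the Leibniz-rule expansion on elementary tensors that you carry out, with the three terms arising from $\partial_j[a\xi_1]$ and $\partial_j[\xi_2]$ regrouped into $a\cdot\partial_j^*[\Xi]$ plus the pairing $(\partial_j[a],\Xi)_j$.
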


The next proposition says that $\partial^*_{j}\circ\partial_{j}$ is the number operator with respect to the $B$-valued Chebyshev family.

\begin{Prop}\label{Prop_numberoperator}
    Let $(U^{\eta_{j}}_{n})_{n}$ be the $B$-valued Chebyshev family associated with $\eta_{j}$, $j\in[d]$. Then, we have 
    \[
    \left(\partial_{j}^*\circ\partial_{j}\right)
    \left[
    U^{\eta_{j}}_{n}
    \left(

        \right)
        (X_{i(k)})
        \right]
        \Biggr)_{j}.
    \end{align*}
    If $k=2$, then the second and the third sums in the above right-hand side are equal to $0$, since $i\in\mathrm{Alt}(I(2,d))$ (that is, $i(\ell-1)\not=i(\ell)(=j)$) and $B\langle X_{i}\rangle\subset\ker(\partial_{j})$ for each $i\in[d]\setminus\{j\}$. Even if $k>2$, then they are also equal to $0$ by Proposition \ref{Prop_Bsemicircular_equivalent} and the definition of the bilinear map $(\cdot,\cdot)_{j}$. By Proposition \ref{Prop_BChebyshev_tensor}, the first term in the above right-hand side is equal to
    \[
    \left(
    \sum_{i(\ell)=j}
    n(\ell)
    \right)
    \cdot
    U^{\eta_{i(1)}}_{n(1)}
    \left(
    \begin{smallmatrix}
        b^{(1)}_{1}\\
        b^{(1)'}_{1}
    \end{smallmatrix}
    ;\cdots;
    \begin{smallmatrix}
        b^{(1)}_{n(1)}\\
        b^{(1)'}_{n(1)}
    \end{smallmatrix}
    \right)
    (X_{i(1)})
    \cdots
    U^{\eta_{i(k)}}_{n(k)}
    \left(
    \begin{smallmatrix}
        b^{(k)}_{1}\\
        b^{(k)'}_{1}
    \end{smallmatrix}
    ;\cdots;
    \begin{smallmatrix}
        b^{(k)}_{n(k)}\\
        b^{(k)'}_{n(k)}
    \end{smallmatrix}
    \right)
    (X_{i(k)}),
    \]
    and hence the proof has been completed.
\end{proof}

\begin{Cor}\label{Cor_directdecomposition}
    Let $X$ be a singleton of a formal variable (i.e., the case of $d=1$). Let $B$ be a unital $C^*$-algebra and $\eta$ a completely positive map on $B$. Then, we have the following direct sum decomposition:
    \[
    B\langle X\rangle
    =
    B\oplus\bigoplus_{n\geq1}^{\mathrm{alg}}\mathrm{span}\{\mathrm{ran}(U^{\eta}_{n})\},
    \] 
    where
    \[
    \mathrm{ran}(U^{\eta}_{n})
    =
    \left\{
    U^{\eta}_{n}
    \left(
    \begin{smallmatrix}
        b_{1}\\
        b_{1}'
    \end{smallmatrix}
    ;
    \begin{smallmatrix}
        b_{2}\\
        b_{2}'
    \end{smallmatrix}
    ;\cdots;
    \begin{smallmatrix}
        b_{n}\\
        b_{n}'
    \end{smallmatrix}
    \right)
    \,\middle|\,
    b_{1},\dots,b_{n},b_{1}',\dots,b_{n}'\in B
    \right\}.
    \]
\end{Cor}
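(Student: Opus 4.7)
The plan is to combine the spanning statement of Lemma \ref{Lem_spann_BChebyshev} (which already supplies surjectivity) with the number-operator identity of Proposition \ref{Prop_numberoperator} to obtain linear independence of the summands. The decisive observation is that $\partial^{*}\circ\partial$ vanishes on $B$ and acts as the scalar $n$ on $\mathrm{span}\{\mathrm{ran}(U^{\eta}_{n})\}$, so that the putative direct summands are eigenspaces of $\partial^{*}\circ\partial$ with pairwise distinct eigenvalues.

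Starting from an assumed relation $b_{0}+\sum_{n=1}^{N}P_{n}=0$ with $b_{0}\in B$ and $P_{n}\in\mathrm{span}\{\mathrm{ran}(U^{\eta}_{n})\}$, I will first isolate $b_{0}$: fix any realization of a $B$-valued semi-circular element $S$ with variance $\eta$ (for instance the Fock-space realization of Proposition \ref{Prop_realization_Bsemicircular}), apply $E\circ\mathrm{ev}_{S}$ to the equation, and invoke Proposition \ref{Prop_Bsemicircular_equivalent} specialized to $d=1$, which yields
\[
E\left[U^{\eta}_{n}\left(\begin{smallmatrix}b_{1}\\ b_{1}'\end{smallmatrix};\cdots;\begin{smallmatrix}b_{n}\\ b_{n}'\end{smallmatrix}\right)(S)\right]=0
\]
for every $n\geq 1$; every $P_{n}$ therefore drops out and we obtain $b_{0}=0$. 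To separate the remaining terms, I apply $(\partial^{*}\circ\partial)^{k}$ to the reduced identity $\sum_{n=1}^{N}P_{n}=0$; Proposition \ref{Prop_numberoperator} gives $\sum_{n=1}^{N}n^{k}P_{n}=0$ in $B\langle X\rangle$ for every $k\in\{0,1,\dots,N-1\}$, and the invertibility of the Vandermonde matrix $(n^{k})_{1\leq n\leq N,\,0\leq k\leq N-1}$ forces $P_{n}=0$ for each $n\in[N]$.

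The only mild conceptual point to address is that Definition \ref{Def_divergence} builds $\partial^{*}$ from a chosen realization $(A,B,\tau,E)$, whereas the corollary is stated purely in terms of $(B,\eta)$. This is not a genuine obstacle: the composite $\eta\circ E\circ\mathrm{ev}_{S}\colon B\langle X\rangle\to B$ that enters the definition of $\partial^{*}$ is uniquely determined by $\eta$ through the moment formula of Definition \ref{Def_Bsemicircular}, so $\partial^{*}$, and hence the eigenvalue identity of Proposition \ref{Prop_numberoperator}, is an identity in $B\langle X\rangle$ depending only on $(B,\eta)$. No role is played by the auxiliary trace, and the Fock-space realization suffices to run the above argument.
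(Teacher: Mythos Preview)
Your proof is correct and rests on the same idea as the paper's: the summands are contained in eigenspaces of $\partial^{*}\circ\partial$ for the pairwise distinct eigenvalues $0,1,2,\dots$ (Proposition~\ref{Prop_numberoperator}), and Lemma~\ref{Lem_spann_BChebyshev} supplies the spanning. Your treatment is in fact more explicit than the paper's---you isolate $b_{0}$ via $E\circ\mathrm{ev}_{S}$ and then run a Vandermonde argument for full linear independence, whereas the paper only records that pairwise intersections $\mathrm{span}\{\mathrm{ran}(U^{\eta}_{n})\}\cap\mathrm{span}\{\mathrm{ran}(U^{\eta}_{m})\}$ vanish, tacitly relying on the standard fact that eigenspaces for distinct eigenvalues are independent.
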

\begin{proof}
    Let $P(X)\in B\langle X\rangle$ satisfy that $P(X)\in\mathrm{span}\{\mathrm{ran}(U^{\eta}_{n})\}\cap\mathrm{span}\{\mathrm{ran}(U^{\eta}_{m})\}$ for some $n,m\in\mathbb{N}$ with $n\not=m$. By Proposition \ref{Prop_numberoperator}, we have $n\cdot P(X)=m\cdot P(X)$, and thus $(m-n)\cdot P(X)=0$. Since $n\not=m$, we have $P(X)=0$.
\end{proof}

\begin{Prop}\label{Prop_divergence_product}
    Let $j\in[d]$. Assume that 
    \[
    \tau\left(\eta_{j}(b_{1})b_{2}\right)=\tau\left(b_{1}\eta_{j}(b_{2})\right)
    \]
    for any $b_{1},b_{2}\in B$
    and that, with an element $\Xi=\sum_{i=1}^{N}\xi_{1,i}\otimes\xi_{2,i}\in B_{\langle d\rangle}\otimes B_{\langle d\rangle}$, we have 
    \begin{align*}
        \left
        \langle
        \mathrm{ev}_{S}(\partial_{j}^*[\Xi]),
        \mathrm{ev}_{S}(\xi)
        \right\rangle_{\tau}
        =
        \left\langle
        \mathrm{ev}_{S}^{\otimes2}(\Xi),
        \mathrm{ev}_{S}^{\otimes 2}
        (\partial_{j}[\xi])
        \right\rangle_{\eta_{j}}
    \end{align*}
    for any $\xi\in B_{\langle d\rangle}$. Then, for any $a\in B_{\langle d\rangle}$, we have
    \begin{align*}
        \left\langle
        \mathrm{ev}_{S}\bigl(\partial_{j}^*\left[(a\otimes1)\Xi\right]\bigr),
        \mathrm{ev}_{S}(\xi)
        \right\rangle_{\tau}
        =
        \left\langle
        \mathrm{ev}_{S}^{\otimes2}\left((a\otimes 1)\Xi\right),
        \mathrm{ev}_{S}^{\otimes2}(\partial_{j}[\xi])
        \right\rangle_{\eta_{j}}
    \end{align*}
    and 
    \begin{align*}
        \left\langle
        \mathrm{ev}_{S}\bigl(\partial_{j}^*\left[\Xi(1\otimes a)\right]\bigr),
        \mathrm{ev}_{S}(\xi)
        \right\rangle_{\tau}
        =
        \left\langle
        \mathrm{ev}_{S}^{\otimes2}
        (\Xi(1\otimes a)),
        \mathrm{ev}_{S}^{\otimes2}(\partial_{j}[\xi])
        \right\rangle_{\eta_{j}}
    \end{align*}
    for any $\xi\in B_{\langle d\rangle}$.
\end{Prop}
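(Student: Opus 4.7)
The plan is to use Lemma \ref{Lem_formula_divergence} to split the divergence of the product into $\partial_{j}^*[\Xi]$ acted on by the extra factor minus a correction term, then reduce the main summand to the standing hypothesis by moving $a^{*}$ through $\langle\cdot,\cdot\rangle_{\tau}$ via the traciality of $\tau$, apply the Leibniz rule for $\partial_{j}$, and finally match the resulting cross term against the correction term produced by Lemma \ref{Lem_formula_divergence}.

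For the identity involving $(a\otimes 1)\Xi$, I would first write
\[
\partial_{j}^*[(a\otimes 1)\Xi] = a\cdot\partial_{j}^*[\Xi] - \bigl(\partial_{j}[a],\Xi\bigr)_{j}
\]
from Lemma \ref{Lem_formula_divergence}. Pairing the first summand with $\mathrm{ev}_{S}(\xi)$, the $*$-homomorphism property of $\mathrm{ev}_{S}$ and traciality of $\tau$ rewrite it as $\langle \mathrm{ev}_{S}(\partial_{j}^*[\Xi]),\mathrm{ev}_{S}(a^*\xi)\rangle_{\tau}$, and the standing hypothesis applied with $a^*\xi$ in place of $\xi$ turns this into $\langle \mathrm{ev}_{S}^{\otimes 2}(\Xi),\mathrm{ev}_{S}^{\otimes 2}(\partial_{j}[a^*\xi])\rangle_{\eta_{j}}$. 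The Leibniz rule $\partial_{j}[a^*\xi] = \partial_{j}[a^*](1\otimes\xi) + (a^*\otimes 1)\partial_{j}[\xi]$ then produces two pieces: the one containing $(a^*\otimes 1)\partial_{j}[\xi]$ equals $\langle \mathrm{ev}_{S}^{\otimes 2}((a\otimes 1)\Xi),\mathrm{ev}_{S}^{\otimes 2}(\partial_{j}[\xi])\rangle_{\eta_{j}}$ directly from the definition of $\langle\cdot,\cdot\rangle_{\eta_{j}}$, while the piece containing $\partial_{j}[a^*](1\otimes\xi)$ I would identify with $\langle \mathrm{ev}_{S}(\bigl(\partial_{j}[a],\Xi\bigr)_{j}),\mathrm{ev}_{S}(\xi)\rangle_{\tau}$. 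This last identification relies on the flip-conjugation identity $\partial_{j}[a^*] = \sigma\bigl((*\otimes *)\partial_{j}[a]\bigr)$ (where $\sigma$ is the tensor flip), combined with the $*$-preservation of the conditional expectation $E$ and of the completely positive map $\eta_{j}$. The two instances of the cross term then cancel, yielding the first identity.

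For the identity involving $\Xi(1\otimes a)$, the decomposition given by Lemma \ref{Lem_formula_divergence} is $\partial_{j}^*[\Xi(1\otimes a)] = \partial_{j}^*[\Xi]\cdot a - \bigl(\Xi,\partial_{j}[a]\bigr)_{j}$, and the Leibniz rule $\partial_{j}[\xi a^*] = \partial_{j}[\xi](1\otimes a^*) + (\xi\otimes 1)\partial_{j}[a^*]$ plays the analogous role. The matching of the main term is routine; however, the matching of the cross term is the step where the hypothesis $\tau(\eta_{j}(b_{1})b_{2}) = \tau(b_{1}\eta_{j}(b_{2}))$ enters. Indeed, after using traciality of $\tau$ and the module property $\tau(bz)=\tau(bE[z])$ for $b\in B$ and $z\in A$ (a consequence of $\tau\circ E=\tau$), one side reduces to an expression of the form $\tau\bigl(\eta_{j}(b_{1})b_{2}\bigr)$ and the other to $\tau\bigl(b_{1}\eta_{j}(b_{2})\bigr)$ for the same $b_{1},b_{2}\in B$, and the hypothesis closes the gap.

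The main (though purely mechanical) obstacle is the bookkeeping of $*$'s and the tensor flip in the cross-term matching; I expect this to be entirely routine once $\Xi = \sum_{i}\xi_{1,i}\otimes\xi_{2,i}$ and $\partial_{j}[a]$ are written in Sweedler form, but tedious if spelled out in full.
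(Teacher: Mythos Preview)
Your proposal is correct and follows essentially the same approach as the paper: both arguments combine Lemma \ref{Lem_formula_divergence}, the Leibniz rule applied to $a^{*}\xi$ (resp.\ $\xi a^{*}$), the standing hypothesis, and the matching of the cross term, with the $\tau$-symmetry of $\eta_{j}$ entering only in the second identity. The only cosmetic difference is that the paper starts from the $\langle\cdot,\cdot\rangle_{\eta_{j}}$ side and unwinds toward the $\langle\cdot,\cdot\rangle_{\tau}$ side, whereas you start from the $\langle\cdot,\cdot\rangle_{\tau}$ side via Lemma \ref{Lem_formula_divergence}; the Sweedler/flip bookkeeping you flag is exactly what the paper does in explicit coordinates.
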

\begin{proof}
    Without loss of generality, we may assume $j=1$. It suffices to consider the case when $a$ and $\xi$ are monomials in $B_{\langle d \rangle}$. Then, we observe that
    \begin{align*}
        \,&\left\langle
        \mathrm{ev}_{S}^{\otimes2}((a\otimes 1)\Xi),
        \mathrm{ev}_{S}^{\otimes2}(\partial_{1}[\xi])
        \right\rangle_{\eta_{1}}\\
        &=
        \sum_{\xi=\xi(1)X_{1}\xi(2)}
        \sum_{i=1}^{N}
        \left\langle
        (\mathrm{ev}_{S}(a\xi_{i,1})\otimes\mathrm{ev}_{S}(\xi_{i,2}),
        \mathrm{ev}_{S}(\xi(1))\otimes \mathrm{ev}_{S}(\xi(2))
        \right\rangle_{\eta_{1}}\\
        &=
        \sum_{\xi=\xi(1)X_{1}\xi(2)}
        \sum_{i=1}^{N}
        \tau\bigl(
        \mathrm{ev}_{S}(\xi_{i,2})^*\eta_{1}\left(E[\mathrm{ev}_{S}(\xi_{i,1})^*\mathrm{ev}_{S}(a)^*\mathrm{ev}_{S}(\xi(1))]\right)
        \mathrm{ev}_{S}(\xi(2))
        \bigr)\\
        &=
        \left\langle
        \mathrm{ev}_{S}^{\otimes2}(\Xi),
        \mathrm{ev}_{S}^{\otimes2}\left((a^*\otimes1)\partial_{1}[\xi]\right)
        \right\rangle_{\eta_{1}}\\
        &=
        \left\langle
        \mathrm{ev}_{S}^{\otimes2}(\Xi),
        \mathrm{ev}_{S}^{\otimes2}\left((\partial_{1}[a^*\xi]\right)
        \right\rangle_{\eta_{1}}
        -
        \left\langle
        \mathrm{ev}_{S}^{\otimes2}(\Xi),
        \mathrm{ev}_{S}^{\otimes2}\bigl((\partial_{1}[a^*](1\otimes\xi))\bigr)
        \right\rangle_{\eta_{1}}\\
        &=
        \left\langle
        \mathrm{ev}_{S}(\partial_{1}^*[\Xi]),\mathrm{ev}_{S}(a^*\xi) 
        \right\rangle_{\tau}
        -
        \sum_{a=a_{1}X_{1}a_{2}}
        \left\langle
        \mathrm{ev}_{S}(\xi_{i,1})\otimes\mathrm{ev}_{S}(\xi_{i,2}),
        \mathrm{ev}_{S}(a_{2}^*)\otimes \mathrm{ev}_{S}(a_{1}^*\xi)
        \right\rangle_{\eta_{1}}\\
        &=
        \left\langle
        \mathrm{ev}_{S}(a)\mathrm{ev}_{S}(\partial_{1}^*[\Xi]),\mathrm{ev}_{S}(\xi) 
        \right\rangle_{\tau}
        -
        \sum_{a=a_{1}X_{1}a_{2}}
        \tau\bigl(
        \mathrm{ev}_{S}(\xi_{i,2}^*)\eta_{1}\left(E[\mathrm{ev}_{S}(\xi_{i,1}^*)\mathrm{ev}_{S}(a^*_{2})]\right)\mathrm{ev}_{S}(a^*_{1})\mathrm{ev}_{S}(\xi)
        \bigr)\\
        &=
        \left\langle
        \mathrm{ev}_{S}\left(a\cdot\partial_{1}^*[\Xi]\right),\mathrm{ev}_{S}(\xi) 
        \right\rangle_{\tau}
        -
        \left\langle
        \sum_{a=a_{1}Xa_{2}}
        \mathrm{ev}_{S}(a_{1})\eta_{1}\left(E[\mathrm{ev}_{S}(a_{2})\mathrm{ev}_{S}(\xi_{i,1})]\right)\mathrm{ev}_{S}(\xi_{i,2}),\,
        \mathrm{ev}_{S}(\xi)
        \right\rangle_{\tau}\\
        &=
        \left\langle
        \mathrm{ev}_{S}\left(a\cdot\partial_{1}^*[\Xi]\right) 
        -
        \mathrm{ev}_{S}\left(\bigl(\partial_{1}[a],\Xi\bigr)_{1}\right),
        \mathrm{ev}_{S}(\xi)
        \right\rangle_{\tau}\\
        &=
        \left\langle
        \mathrm{ev}_{S}\bigl(\partial_{1}^*[(a\otimes1)\Xi]\bigr),
        \mathrm{ev}_{S}(\xi)
        \right\rangle_{\tau}
    \end{align*}
    as desired, where the assumption for $\Xi$ was used in the fifth equality. Note that, in this observation, we did not use the assumption of a certain tracial property of $\eta_{1}$.
    Similarly, we also have
    \begin{align*}
        \,&\left\langle
        \mathrm{ev}_{S}^{\otimes2}\left(\Xi(1\otimes a)\right),
        \mathrm{ev}_{S}^{\otimes2}\left(\partial_{1}[\xi]\right)
        \right\rangle_{\eta_{1}}\\
        &=
        \sum_{\xi=\xi(1)X_{1}\xi(2)}\sum_{i=1}^{N}
        \tau\bigl(
        \mathrm{ev}_{S}(a^*)\mathrm{ev}_{S}(\xi_{i,2}^*)\eta_{1}\left(E[\mathrm{ev}_{S}(\xi_{i,1}^*)\mathrm{ev}_{S}(\xi(1))]\mathrm{ev}_{S}(\xi(2))\right)
        \bigr)\\
        &=
        \sum_{\xi=\xi(1)X_{1}\xi(2)}\sum_{i=1}^{N}
        \tau\bigl(
        \mathrm{ev}_{S}(\xi_{i,2}^*)\eta_{1}\left(E[\mathrm{ev}_{S}(\xi_{i,1}^*)\mathrm{ev}_{S}(\xi(1))]\mathrm{ev}_{S}(\xi(2))\mathrm{ev}_{S}(a^*)\right)
        \bigr)\\
        &=
        \left\langle
        \mathrm{ev}_{S}^{\otimes2}(\Xi),
        \mathrm{ev}_{S}^{\otimes2}\left(\partial_{1}[\xi](1\otimes a^*)\right)
        \right\rangle_{\eta_{1}}\\
        &=
        \left\langle
        \mathrm{ev}_{S}^{\otimes2}(\Xi),
        \mathrm{ev}_{S}^{\otimes2}\left(\partial_{1}[\xi a^*]\right)
        \right\rangle_{\eta_{1}}
        -\left\langle
        \mathrm{ev}_{S}^{\otimes2}(\Xi),
        \mathrm{ev}_{S}^{\otimes2}\left((\xi\otimes1)\partial_{1}[a^*]\right)
        \right\rangle_{\eta_{1}}\\
        &=
        \left\langle
        \mathrm{ev}_{S}\left(\partial_{1}^*[\Xi]\right),\mathrm{ev}_{S}(\xi)\mathrm{ev}_{S}(a^*)
        \right\rangle_{\tau}
        -
        \sum_{a=a_{1}X_{1}a_{2}}
        \left\langle
        \mathrm{ev}_{S}^{\otimes2}(\Xi),
        \mathrm{ev}_{S}(\xi a_{2}^*)\otimes \mathrm{ev}_{S}(a^*_{1})
        \right\rangle_{\eta_{1}}\\
        &=
        \left\langle
        \mathrm{ev}_{S}(\partial_{1}^*[\Xi]) \mathrm{ev}_{S}(a),\mathrm{ev}_{S}(\xi)
        \right\rangle_{\tau}
        -
        \sum_{a=a_{1}X_{1}a_{2}}
        \sum_{i=1}^{N}
        \tau\bigl(
        \mathrm{ev}_{S}(\xi_{i,2}^*)\eta_{1}\left(E[\mathrm{ev}_{S}(\xi_{i,1}^*)\mathrm{ev}_{S}(\xi)\mathrm{ev}_{S}(a^*_{2})]\right)\mathrm{ev}_{S}(a_{1}^*)
        \bigr)\\
        &=
        \left\langle
        \mathrm{ev}_{S}(\partial_{1}^*[\Xi]\cdot a),
        \mathrm{ev}_{S}(\xi)
        \right\rangle_{\tau}
        -
        \sum_{a=a_{1}X_{1}a_{2}}
        \sum_{i=1}^{N}
        \tau\bigl(
        E[\mathrm{ev}_{S}(a_{1}^*)\mathrm{ev}_{S}(\xi_{i,2}^*)]\eta_{1}\left(E[\mathrm{ev}_{S}(\xi_{i,1}^*)\mathrm{ev}_{S}(\xi)\mathrm{ev}_{S}(a^*_{2})]\right)
        \bigr)\\
        &=
        \left\langle
        \mathrm{ev}_{S}(\partial_{1}^*[\Xi]\cdot a),\mathrm{ev}_{S}(\xi)
        \right\rangle_{\tau}
        -
        \sum_{a=a_{1}X_{1}a_{2}}
        \sum_{i=1}^{N}
        \tau\bigl(
        \eta_{1}(E[\mathrm{ev}_{S}(a_{1}^*)\mathrm{ev}_{S}(\xi_{i,2}^*)])E[\mathrm{ev}_{S}(\xi_{i,1}^*)\mathrm{ev}_{S}(\xi)\mathrm{ev}_{S}(a^*_{2})]
        \bigr)\\
        &=
        \left\langle
        \mathrm{ev}_{S}(\partial_{1}^*[\Xi]\cdot a),\mathrm{ev}_{S}(\xi)
        \right\rangle_{\tau}
        -
        \sum_{a=a_{1}X_{1}a_{2}}
        \sum_{i=1}^{N}
        \tau\bigl(
        \mathrm{ev}_{S}(a^*_{2})
        \eta_{{\color{red}1}}(E[\mathrm{ev}_{S}(a_{1}^*)\mathrm{ev}_{S}(\xi_{i,2}^*)])\mathrm{ev}_{S}(\xi_{i,1}^*)\mathrm{ev}_{S}(\xi)
        \bigr)\\
        &=
        \left\langle
        \mathrm{ev}_{S}(\partial_{1}^*[\Xi]\cdot a),\mathrm{ev}_{S}(\xi)
        \right\rangle_{\tau}
        -
        \left\langle
        \sum_{a=a_{1}X_{1}a_{2}}
        \sum_{i=1}^{N}
        \mathrm{ev}_{S}(\xi_{i,1})\eta_{1}\left(E[\mathrm{ev}_{S}(\xi_{i,2})\mathrm{ev}_{S}(a_{1})]\right)\mathrm{ev}_{S}(a_{2}),\,
        \mathrm{ev}_{S}(\xi)
        \right\rangle_{\tau}\\
        &=
        \left\langle
        \mathrm{ev}_{S}(\partial_{1}^*[\Xi]\cdot a)
        -
        \mathrm{ev}_{S}
        \left(
        \bigl(
        \Xi,
        \partial_{1}[a]
        \bigr)_{1}
        \right)
        ,
        \mathrm{ev}_{S}(\xi)
        \right\rangle_{\tau}\\
        &=
        \left\langle
        \mathrm{ev}_{S}\left(\partial^*_{1}[\Xi]\cdot a\right),\mathrm{ev}_{S}(\xi)
        \right\rangle_{\tau}
    \end{align*}
    as desired, where the trace property of $\tau$ was used in the second, the seventh and the ninth equalities, the assumption for $\Xi$ was used in the fifth equality, the $\tau$-preserving property of $E$ were used in the seventh and the ninth equalities and the assumption for $\eta_{1}$ is used in the eighth equality.
\end{proof}

\begin{Cor}\label{Cor_integralbyparts}
    Let $j\in[d]$. Assume that
    \[
    \tau\left(\eta_{j}(b_{1})b_{2}\right)=\tau\left(b_{1}\eta_{j}(b_{2})\right)
    \]
    for any $b_{1},b_{2}\in B$. Then, we have 
    \[
    \left\langle
    \mathrm{ev}_{S}\left(\partial_{j}^*[\Xi]\right),
    \mathrm{ev}_{S}(\xi)
    \right\rangle_{\tau}
    =
    \left\langle
    \mathrm{ev}_{S}^{\otimes2}(\Xi),
    \mathrm{ev}_{S}^{\otimes2}\left(\partial_{j}[\xi]\right)
    \right\rangle_{\eta_{j}}
    \]
    for any $\xi\in B_{\langle d\rangle}$ and $\Xi\in B_{\langle d\rangle}\otimes B_{\langle d\rangle}$.
\end{Cor}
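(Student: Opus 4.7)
The plan is to reduce the general identity to the simplest possible instance $\Xi = 1\otimes 1$ and then bootstrap using Proposition \ref{Prop_divergence_product}, which is precisely designed for this propagation step.

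First, by bilinearity of both sides in $\Xi$, I would reduce the statement to simple tensors $\Xi = a\otimes b$ with $a, b \in B_{\langle d\rangle}$. Writing $a\otimes b = (a\otimes 1)(1\otimes 1)(1\otimes b)$, Proposition \ref{Prop_divergence_product} says that if the integration-by-parts identity holds for some $\Xi_{0}$ (for every test element $\xi\in B_{\langle d\rangle}$), then it continues to hold for both $(a\otimes 1)\Xi_{0}$ and $\Xi_{0}(1\otimes a)$. Applying this twice, starting from $\Xi_{0}=1\otimes 1$, will yield the identity for $(a\otimes 1)(1\otimes 1)(1\otimes b) = a\otimes b$, and hence for every $\Xi$.

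Thus the only real content is the base case $\Xi = 1\otimes 1$. Since $\partial_{j}[1]=0$, the definition of $\partial_{j}^{*}$ gives $\partial_{j}^{*}[1\otimes 1] = m_{X_{j}}(1\otimes 1) = X_{j}$, so $\mathrm{ev}_{S}(\partial_{j}^{*}[1\otimes 1]) = S_{j}$. The required identity therefore becomes
\[
\left\langle S_{j},\,\mathrm{ev}_{S}(\xi)\right\rangle_{\tau}
= \left\langle 1\otimes 1,\,\mathrm{ev}_{S}^{\otimes 2}(\partial_{j}[\xi])\right\rangle_{\eta_{j}},
\]
which is exactly the $B$-valued Stein equation from Proposition \ref{Prop_Bsemicircular_Stein}.

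No step should pose a genuine obstacle: the tracial hypothesis on $\eta_{j}$ is used only inside Proposition \ref{Prop_divergence_product} (to swap $\eta_{j}$ past $\tau$ when analyzing the $\Xi(1\otimes a)$ case), so it does not need to be invoked again here. The main thing to be careful about is that Proposition \ref{Prop_divergence_product} requires the identity to hold for the starting $\Xi$ against \emph{every} $\xi\in B_{\langle d\rangle}$, which is exactly the form in which Stein's equation is available, so the induction on the number of factors $(a\otimes 1)$ and $(1\otimes b)$ applied to $1\otimes 1$ goes through cleanly.
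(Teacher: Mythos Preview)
Your proposal is correct and follows essentially the same route as the paper: both establish the base case $\Xi=1\otimes 1$ by observing $\partial_{j}^{*}[1\otimes 1]=X_{j}$ and invoking Proposition~\ref{Prop_Bsemicircular_Stein}, and then bootstrap to a general simple tensor via two applications of Proposition~\ref{Prop_divergence_product} (the paper does $1\otimes 1\to 1\otimes\xi_{i,2}\to \xi_{i,1}\otimes\xi_{i,2}$, which is exactly your decomposition $a\otimes b=(a\otimes 1)(1\otimes 1)(1\otimes b)$).
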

\begin{proof}
    With the divergence operator $\partial_{j}^*$, the formula of Proposition \ref{Prop_Bsemicircular_Stein} can be written as follows.
    \[
    \left\langle
    \mathrm{ev}_{S}(\partial_{j}^*[1\otimes1]),\mathrm{ev}_{S}(\xi)
    \right\rangle_{\tau}
    =
    \left\langle
    1\otimes1,
    \mathrm{ev}_{S}^{\otimes2}(\partial_{j}[\xi])
    \right\rangle_{\eta_{j}},\quad 
    \xi\in B_{\langle d\rangle}.
    \]
    Choose an arbitrary $\Xi\in B_{\langle d\rangle}\otimes B_{\langle d\rangle}$ and fix an  arbitrary expression $\sum_{i=1}^{N}\xi_{i,1}\otimes\xi_{i,2}$ with $\xi_{i,1},\xi_{i,2}\in B_{\langle d\rangle}$. By Propositions \ref{Prop_Bsemicircular_Stein} and \ref{Prop_divergence_product}, we have, for any $i\in[N]$,
    \begin{align*}
        \,&
        \left\langle
        \mathrm{ev}_{S}^{\otimes2}(1\otimes\xi_{i,2}),
        \mathrm{ev}_{S}^{\otimes2}(\partial_{j}[\xi])
        \right\rangle_{\eta_{j}}
        =
        \left\langle
        \mathrm{ev}_{S}\left(\partial_{j}^*[1\otimes\xi_{i,2}]\right),
        \mathrm{ev}_{S}(\xi)
        \right\rangle_{\tau},\quad\xi\in B_{\langle d\rangle}.
    \end{align*}
    Moreover, using Proposition \ref{Prop_divergence_product} again, we have
    \begin{align*}
        \,&\left\langle
        \mathrm{ev}_{S}^{\otimes2}(\xi_{i,1}\otimes\xi_{i,2}),
        \mathrm{ev}_{S}^{\otimes2}
        (\partial_{j}[\xi])
        \right\rangle_{\eta_{j}}
        =
        \left\langle
        \mathrm{ev}_{S}\left(\partial_{j}^*[\xi_{i,1}\otimes\xi_{i,2}]\right),
        \mathrm{ev}_{S}(\xi)
        \right\rangle_{\tau}.
    \end{align*}
    By linearity, we obtain that
    \[
    \left\langle
    \mathrm{ev}_{S}\left(\partial_{j}^*[\Xi]\right),
    \mathrm{ev}_{S}(\xi)
    \right\rangle_{\tau}
    =
    \left\langle
    \mathrm{ev}_{S}^{\otimes2}(\Xi),
    \mathrm{ev}_{S}^{\otimes2}\left(\partial_{j}[\xi]\right)
    \right\rangle_{\eta_{j}}
    \]
    for any $\xi\in B_{\langle d\rangle}$ and $\Xi\in B_{\langle d\rangle}\otimes B_{\langle d\rangle}$.
\end{proof}

\section{A characterization of $B$-valued semi-circular element by Poincar\'{e} type inequality}\label{section_characterization}
\subsection{Matrix amplification for $B$-valued Chebyshev family}
In this subsection we prepare a key lemma, which will be used to prove our main result. Here, we sometimes use the following notation:
\[
\bigoplus_{1\leq j\leq k}^{\to}v_{j}=
.
    \end{align*}
    Thus, we are done.
\end{proof}

\subsection{A characterization of $B$-valued semi-circular element by Poincar\'{e} type inequality}
Let $B\subset A$ be a unital inclusion of unital $C^*$-algebras with a faithful tracial state $\tau$ on $A$ and a $\tau$-preserving conditional expectation $E:A\to B$. Let also $\eta_{1},\dots,\eta_{d}$ be completely positive maps from $B$ to $B$ such that each $\eta_{j}$, $j\in[d]$, satisfies $\tau(\eta_{j}(b)b')=\tau(b\eta_{j}(b'))$ for any $b,b'\in B$. We set $\eta=(\eta_{1},\dots,\eta_{d})$.

We denote by $\mathcal{T}_{\eta}B_{\langle d\rangle}$ the following set:
\begin{align*}
    \,&
    \Biggl\{
    b+
    \sum_{1\leq k\leq N}
    \sum_{\substack{1\leq \ell\leq k\\i\in\mathrm{Alt}(I(\ell,d))}}
    \sum_{\substack{n\in I(\ell,\mathbb{N})\\n(1)+\cdots+n(\ell)=k}}\\
    &\hspace{1cm}
    U_{n(1)}^{\eta_{i(1)}}
    \left(
    \begin{smallmatrix}
        b^{(k,\ell,1;i)}_{1}\\
        b^{(k,\ell,1;i)'}_{1}
    \end{smallmatrix}
    ;\cdots;
    \begin{smallmatrix}
        b^{(k,\ell,1;i)}_{n(1)}\\
        b^{(k,\ell,1;i)'}_{n(1)}
    \end{smallmatrix}
    \right)
    (X_{i(1)})
    \cdots
    U_{n(\ell)}^{\eta_{i(\ell)}}
    \left(
    \begin{smallmatrix}
        b^{(k,\ell,\ell;i)}_{1}\\
        b^{(k,\ell,\ell;i)'}_{1}
    \end{smallmatrix}
    ;\cdots;
    \begin{smallmatrix}
        b^{(k,\ell,\ell;i)}_{n(\ell)}\\
        b^{(k,\ell,\ell;i)'}_{n(\ell)}
    \end{smallmatrix}
    \right)
    (X_{i(\ell)})\\
    &\hspace{9cm}
    \Biggl|\,
    N\in\mathbb{N},b,b^{(k,\ell,t;i)}_{s},b^{(k,\ell,t;i)'}_{s}\in B
    \Biggr\}.
\end{align*}

\begin{Rem}\label{Rem_expression}
    The set $\mathcal{T}_{\eta}B_{\langle d\rangle}$ is not, in general, a linear space (see Remark \ref{Rem_exmaple}). If we call
    \[
    U_{n(1)}^{\eta_{i(1)}}
    \left(
    \begin{smallmatrix}
        \cdot\\
        \cdot
    \end{smallmatrix}
    ;\cdots;
    \begin{smallmatrix}
        \cdot\\
        \cdot
    \end{smallmatrix}
    \right)
    (X_{i(1)})
    \cdots
    U_{n(\ell)}^{\eta_{i(\ell)}}
    \left(
    \begin{smallmatrix}
        \cdot\\
        \cdot
    \end{smallmatrix}
    ;\cdots;
    \begin{smallmatrix}
        \cdot\\
        \cdot
    \end{smallmatrix}
    \right)
    (X_{i(\ell)})
    \]
    a $(k,\ell,n,i)$-term ($1\leq\ell\leq k$, $i\in \mathrm{Alt}(I(\ell,d))$ and $n\in I(\ell,\mathbb{N})$), then
    the above definition of $\mathcal{T}B_{\langle d\rangle}$ means that \emph{every element in $\mathcal{T}B_{\langle d\rangle}$ admits an expression such that each $(k,\ell,n,i)$-term ($1\leq\ell\leq k$, $i\in \mathrm{Alt}(I(\ell,d))$ and $n\in I(\ell,\mathbb{N})$) can appear at most only once}. For example,
    \[
    \mathcal{T}_{\eta}B_{\langle 1\rangle}
    =
    \left\{
    b+
    \sum_{1\leq k\leq N}
    U_{k}^{\eta}
    \left(
    \begin{smallmatrix}
        b^{(k)}_{1}\\
        b^{(k)'}_{1}
    \end{smallmatrix}
    ;\cdots;
    \begin{smallmatrix}
        b^{(k)}_{k}\\
        b^{(k)'}_{k}
    \end{smallmatrix}
    \right)
    (X)
    \,\middle|\,
    N\in\mathbb{N},b,b^{(j)}_{s},b^{(j)'}_{s}\in B
    \right\}
    \]
    in the case of $d=1$.
\end{Rem}

\begin{Rem}\label{Rem_exmaple}
    When $B=\mathbb{C}$, we have $\mathcal{T}\mathbb{C}_{\langle d\rangle}=\mathbb{C}_{\langle d\rangle}$. However, $\mathcal{T}B_{\langle d\rangle}\not=B_{\langle d\rangle}$ in general as follows.
    Let $c_{0}(\mathbb{N})$ be the non-unital $C^*$-algebra of all sequences $x=(x_{n})_{n=1}^{\infty}$ of complex number such that $|x_{n}|\to0$ as $n\to\infty$ and $c_{0}(\mathbb{N})^{\sim}$ the unitalization of $c_{0}(\mathbb{N})$. Define $P_{n}(X)\in \left(c_{0}(\mathbb{N})^{\sim}\right)\langle X\rangle$ to be
    $
    P_{n}(X)=\sum_{k=1}^{n}k\delta_{k}X\delta_{k}
    $,
    where $\delta_{k}=(\delta_{k,n})_{n=1}^{\infty}$.
    Then, we have $P_{n}(X)\not\in\mathcal{T}\left(c_{0}(\mathbb{N})^{\sim}\right)\langle X\rangle$ for any $n\in\mathbb{N}$. Indeed, if we had $P_{n}(X)\in\mathcal{T}\left(c_{0}(\mathbb{N})^{\sim}\right)\langle X\rangle$, then there would exist elements $a=(a_{k})_{k=1}^{\infty},a'\in c_{0}(\mathbb{N})^{\sim}$ such that $P_{n}(X)=aXa'$ by Corollary \ref{Cor_directdecomposition}.
    Let $b,c,b',c'\in c_{0}(\mathbb{N})^{\sim}$ be $a=b+c$, $a'=b'+c'$ and $b,b'\in\mathrm{span}\{\delta_{k}\,|\,k\in[n]\}$, $c,c'\in\mathrm{span}\{e_{j}\,|\,j>n\}$. 
    Applying $\partial_{X:c_{0}(\mathbb{N})^{\sim}}$ to both sides of $P_{n}(X)=aXa'$, we have
    $
    \sum_{k=1}^{n}k\delta_{k}\otimes\delta_{k}=a\otimes a',
    $
    and hence, taking multiplication of both sides above, $a_{k}a_{k}'=k$ for any $k\leq n$ and $a_{j}a_{j}'=0$ 
    for any $j>n$, that is, $cc'=0$. 
    Multiplying $1\otimes \delta_{j}$ ($j>n$) and $1\otimes \delta_{1}$ to
    \[
    \sum_{k=1}^{n}k\delta_{k}\otimes\delta_{k}=bXb'+bXc'+cXb'+cXc'
    \]
    from left and right, respectively, we have $0=a_{1}a_{j}' \delta_{1}X\delta_{j}$, that is, $a_{j}'=0$ (since $a_{1}\not=0$). Similarly, we can see $a_{j}=0$ ($j>n$). Hence, we have $a=b$ and $a'=b'$. Thus, we observe that
    \[
    \sum_{\substack{j,k\in[n]\\j\not=k}}a_{j}a_{k}'\delta_{j}X\delta_{k}=P_{n}(X)-bXb'=0.
    \]
    Since $\delta_{j}X\delta_{k}$, $j\not=k\in[n]$, are linearly independent in $\left(c_{0}(\mathbb{N})^{\sim}\right)\langle X\rangle$, it follows that $a_{j}a_{k}'=0$ for any $j,k\in[n]$ with $j\not=k$, that is, $a_{j}=0$ or $a_{k}'=0$. However, this is a contradiction for $a_{k}a_{k}'=k$ ($k\in[n]$).
\end{Rem}

\begin{Rem}\label{rem_bncpspre}
    \begin{enumerate}
    \item Let $(A,B,\tau,E)$ be a tracial $B$-valued $C^*$-probability space. Then, each $(M_{n}(A),M_{n}(B),\tau\otimes\mathrm{tr}_{n},E\otimes\mathrm{id}_{n})$ is a tracial $M_{n}(B)$-valued $C^*$-probability space. Note that $E\otimes\mathrm{id}_{n}$ is $\tau\otimes\mathrm{tr}_{n}$-preserving. Let $\eta$ be a linear map from $B$ to itself. If $\tau(\eta(b)b')=\tau(b\eta(b'))$ for any $b,b'\in B$, then $(\tau\otimes\mathrm{tr}_{n})\left((\eta\otimes\mathrm{id}_{n})(S)T\right)=(\tau\otimes\mathrm{tr}_{n})\left(S(\eta\otimes\mathrm{id}_{n})(T)\right)$ for any $S,T\in M_{n}(B)$.
    \item If $S=(S_{1},\dots,S_{d})$ is a $B$-free $B$-valued semi-circular system with mean $0$ and variance $\eta=(\eta_{1},\dots,\eta_{d})$ in $(A,B,\tau,E)$, then $S\otimes I_{N}=(S_{1}\otimes I_{N},\dots,S_{d}\otimes I_{N})$ is an $M_{N}(B)$-free $M_{N}(B)$-valued semi-circular system with mean $0$ and variance $\eta\otimes I_{N}=(\eta_{1}\otimes I_{N},\dots,\eta_{d}\otimes I_{N})$ in $(M_{N}(A),M_{N}(B),\tau\otimes\mathrm{tr}_{N},E\otimes\mathrm{id}_{N})$.
    \item For any $N\in\mathbb{N}$, there exists a unique $*$-homomorphism $\mathrm{ev}_{X\otimes I_{N}}$ from $M_{N}(B)\langle Y_{1},\dots,Y_{d}\rangle$ to $M_{N}(B\langle X_{1},\dots,X_{d}\rangle)$ such that $b\mapsto b$, $b\in M_{N}(B)$, and $Y_{j}\mapsto X_{j}\otimes I_{N}$, $j\in[d]$ by the universality of the free product of unital algebras.
    \end{enumerate}
\end{Rem}

Lemma \ref{lem_matampChebyshev2} implies the following property of the class $\mathcal{T}_{\eta}B_{\langle d\rangle}$:

\begin{Prop}\label{prop_chebyshevorthog_formla}
    For any $P\in B_{\langle d\rangle}$, there exist $N\in \mathbb{N}$ and $\widetilde{P}\in \mathcal{T}_{\eta\otimes\mathrm{id}_{N}}(M_{N}(B))\langle Y_{1},\dots,Y_{d}\rangle$ such that 
    \begin{enumerate}
        \item $P$ appears as the corner of $\widetilde{P}$, i.e., 
        $\begin{bmatrix}
            P(X_{1},\dots,X_{d})&\\
            &\mbox{\Large0}_{N-1}
        \end{bmatrix}
        =\mathrm{ev}_{X\otimes I_{N}}(\widetilde{P})$,\\
        \item We have $\left\|\mathrm{ev}_{S\otimes I_{N}}^{\otimes2}(\partial_{Y_{j}}\widetilde{P})\right\|^2_{\eta_{j}\otimes\mathrm{id}_{N}}=\frac{1}{N}\left\|\mathrm{ev}_{S}^{\otimes2}(\partial_{j}P)\right\|^2_{\eta_{j}}$,
    \end{enumerate}
    where the symbol $\|\cdot\|_{\eta_{j}\otimes\mathrm{id}_{N}}$ is the norm with respect to the (pre-)inner product $\langle\cdot,\cdot\rangle_{\eta_{j}\otimes\mathrm{id}_{N}}$ constructed in such a way that
    \[\langle T_{1}\otimes T_{2},T_{1}'\otimes T_{2}'\rangle_{\eta_{j}\otimes\mathrm{id}_{N}}=(\tau\otimes\mathrm{tr}_{N})\left(T_{2}^*\left((\eta_{j}\otimes\mathrm{id}_{N})\circ(E\otimes\mathrm{id}_{N})\right)\left(T_{1}^*T_{1}'\right)T_{2}'\right)\]
    for any $T_{1},T_{1}',T_{2},T_{2}'\in M_{N}(A)$.
\end{Prop}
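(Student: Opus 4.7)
The plan is to construct $\widetilde{P}$ by first decomposing $P$ as a linear combination of Chebyshev products (grouped by structural type), using Lemma~\ref{lem_matampChebyshev2} to collapse each group into a single matrix-coefficient Chebyshev product, and then combining these across types so that each structural type appears exactly once. Concretely, I first apply Lemma~\ref{Lem_spann_BChebyshev} iteratively (in each variable $X_1,\dots,X_d$) to write
\[
P = b_0 + \sum_{\alpha}\sum_{\ell=1}^{k_\alpha} T_\ell^{(\alpha)},
\]
where $\alpha = (s_\alpha, n_\alpha, i_\alpha)$ ranges over distinct structural types with $i_\alpha \in \mathrm{Alt}(I(s_\alpha,d))$ and $n_\alpha \in I(s_\alpha,\mathbb{N})$, and each $T_\ell^{(\alpha)}$ is a product of $B$-valued Chebyshev polynomials of type $\alpha$ with specific $B$-coefficients.

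Next, set $N := \max_\alpha k_\alpha$. For each type $\alpha$, Lemma~\ref{lem_matampChebyshev2} expresses $\sum_\ell T_\ell^{(\alpha)}$ as the $(1,1)$-entry of $V_\alpha D_\alpha V_\alpha^\top$, where $D_\alpha \in M_N(B)\langle Y_1,\dots,Y_d\rangle$ is a product of Chebyshev polynomials with block-diagonal matrix coefficients, and the zero--one matrices $V_\alpha, V_\alpha^\top$ may be absorbed into the first top-coefficient and last bottom-coefficient of $D_\alpha$ by the partial $M_N(B)$-balance property (amplification of the Remark after Definition~\ref{Def_B_chebyshev}). This produces a single type-$\alpha$ Chebyshev product $\widetilde{Q}_\alpha \in M_N(B)\langle Y_1,\dots,Y_d\rangle$. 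Set
\[
\widetilde{P} := b_0\, e_{11} + \sum_\alpha \widetilde{Q}_\alpha.
\]
Since each structural type occurs at most once among the summands, $\widetilde{P} \in \mathcal{T}_{\eta\otimes \mathrm{id}_N}(M_N(B))_{\langle d\rangle}$, and property~(1) follows from the construction together with Lemma~\ref{lem_matampChebyshev2}.

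For property~(2), the orthogonality of Chebyshev products of distinct structural types (Corollary~\ref{cor_orthogonality_chebyshev}) applied on both sides---at the $B$-level on the right, and at the $M_N(B)$-level on the left using Remark~\ref{rem_bncpspre}(2), which identifies $S\otimes I_N$ as an $M_N(B)$-free semi-circular system with variance $\eta\otimes \mathrm{id}_N$---makes the cross terms between different $\alpha$'s vanish, reducing~(2) to the per-type identity
\[
\bigl\|\mathrm{ev}_{S\otimes I_N}^{\otimes 2}(\partial_{Y_j}\widetilde{Q}_\alpha)\bigr\|_{\eta_j\otimes \mathrm{id}_N}^2 = \frac{1}{N}\left\|\mathrm{ev}_S^{\otimes 2}\Bigl(\partial_j\sum_{\ell=1}^{k_\alpha} T_\ell^{(\alpha)}\Bigr)\right\|_{\eta_j}^2.
\]
This per-type identity is then verified by expanding both sides via Proposition~\ref{Prop_fromula_fdq_Chebyshev}. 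By the Leibniz rule, $\partial_{Y_j}(V_\alpha D_\alpha V_\alpha^\top) = (V_\alpha\otimes V_\alpha^\top)\bullet \partial_{Y_j}(D_\alpha)$; each tensor summand of $\partial_{Y_j}(D_\alpha)$ is of block-diagonal form $(\bigoplus_\ell A_\ell)\otimes(\bigoplus_\ell B_\ell)$ by Lemma~\ref{lem_matampChebyshev1}, and sandwiching by $V_\alpha$ and $V_\alpha^\top$ concentrates each tensor factor on a single matrix unit $e_{1\ell}$ or $e_{\ell 1}$. Expanding $\langle\cdot,\cdot\rangle_{\eta_j\otimes \mathrm{id}_N}$ on such tensors, the normalized trace $\tau\otimes \mathrm{tr}_N$ collapses off-diagonal matrix-unit pairings and leaves $\tfrac{1}{N}\sum_\ell \langle A_\ell\otimes B_\ell,\cdot\rangle_{\eta_j}$, matching the $\ell$-indexed sum on the right.

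The main obstacle will be the bookkeeping in this last computation: one must track how the matrices $V_\alpha, V_\alpha^\top$ propagate through each of the $s_\alpha$ splitting positions produced by Proposition~\ref{Prop_fromula_fdq_Chebyshev}, and verify that the resulting matrix-unit structure of every tensor summand yields precisely the expected $B$-valued inner product up to the $1/N$ normalization coming from $\mathrm{tr}_N(e_{\ell\ell}) = 1/N$. The calculation is facilitated by the block-diagonal amplification property of Lemma~\ref{lem_matampChebyshev1}, which keeps the intermediate Chebyshev factors of block-diagonal form at every splitting position, making the $\mathrm{tr}_N$-computation entirely explicit.
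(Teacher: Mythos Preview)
Your proposal is correct and follows essentially the same route as the paper: decompose $P$ into Chebyshev products grouped by structural type, take $N$ to be the maximal multiplicity, and apply Lemma~\ref{lem_matampChebyshev2} to collapse each group into a single $M_N(B)$-valued Chebyshev product (absorbing the constant row/column matrices via the $B$-balance property), so that the resulting $\widetilde P$ lies in $\mathcal{T}_{\eta\otimes\mathrm{id}_N}(M_N(B))_{\langle d\rangle}$. For part~(2) the paper merely says ``by direct computation using Corollary~\ref{cor_orthogonality_chebyshev},'' whereas you spell out the mechanism (orthogonality across types, block-diagonal preservation under $\partial_{Y_j}$ via Lemma~\ref{lem_matampChebyshev1}, and the $1/N$ from $\mathrm{tr}_N$); this is exactly the computation the paper has in mind.
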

\begin{proof}
    Because $B$ and $\mathrm{ran}(U^{\eta_{i}}_{n}(-)(X_{i}))$ ($n\in\mathbb{N}$, $i\in[d]$) generate $B_{\langle d\rangle}$, any $P\in B_{\langle d\rangle}$ admits the following expression:
    \begin{align*}
        \,&P(X_{1},\dots,X_{d})=b+\sum_{m=1}^{M}\sum_{s=1}^{m}\sum_{\substack{n_{s}\in I(s,\mathbb{N})\\n_{s}(1)+\cdots+n_{s}(s)=m}}\sum_{i_{s}\in\mathrm{Alt}(I(s,d))}\sum_{j=1}^{k(m,s,n_{s},i_{s})}\\
        &\quad
            U_{n_{s}(1)}^{\eta_{i_{s}(1)}}
            \left(
            \begin{matrix}
                b_{i_{s},1,1}(j)\\
                b_{i_{s},1,1}'(j)
            \end{matrix};\cdots;
            \begin{matrix}
                b_{i_{s},1,n_{s}(1)}(j)\\
                b_{i_{s},1,n_{s}(1)}'(j)
            \end{matrix}
            \right)
            (X_{i_{s}(1)})\cdots
            U_{n_{s}(s)}^{\eta_{i_{s}(s)}}
            \left(
            \begin{matrix}
                b_{i_{s},s,1}(j)\\
                b_{i_{s},s,1}'(j)
            \end{matrix};\cdots;
            \begin{matrix}
                b_{i_{s},s,n_{s}(s)}(j)\\
                b_{i_{s},s,n_{s}(s)}'(j)
            \end{matrix}
            \right)
            (X_{i_{s}(s)})
    \end{align*}
    for some $M\in\mathbb{N}$, $b_{i_{s},t,f}(j),b_{i_{s},t,f}'(j)\in B$. 
    Set 
    \begin{align*}
        \,&N=\max\Bigl\{k(m,s,n_{s},i_{s})\,\Bigl|\,m\in[M],s\in[m],\\
        &\hspace{2cm}n_{s}\in I(s,\mathbb{N})\mbox{ with }n_{s}(1)+\cdots+n_{s}(s)=m,i_{s}\in\mathrm{Alt}(I(s,d))\Bigr\},
    \end{align*} 
    \[\mathbf{b}_{i_{s},t,f;n_{s}}=
    \left[
    \begin{smallmatrix}
        b_{i_{s},t,f}(1)&&\\
        &\ddots&\\
        &&b_{i_{s},t,f}(k(m,s,n_{s},i_{s}))
    \end{smallmatrix}
    \right],\quad
    \mathbf{b}_{i_{s},t,f;n_{s}}'=
    \left[
    \begin{smallmatrix}
        b_{i_{s},t,f}'(1)&&\\
        &\ddots&\\
        &&b_{i_{s},t,f}'(k(m,s,n_{s},i_{s}))
    \end{smallmatrix}
    \right]\]
    and 
    \[\widetilde{\mathbf{b}}_{i_{s},t,f;n_{s}}=
    \left[
    \begin{smallmatrix}
        \mathbf{b}_{i_{s},t,f;n_{s}}&\\
        &\mbox{\Large 0}_{N-k(m,s,n_{s},i_{s})}
    \end{smallmatrix}
    \right],\quad
    \widetilde{\mathbf{b}}_{i_{s},t,f;n_{s}}'=
    \left[
    \begin{smallmatrix}
        \mathbf{b}_{i_{s},t,f;n_{s}}'&\\
        &\mbox{\Large 0}_{N-k(m,s,n_{s},i_{s})}
    \end{smallmatrix}
    \right].\]
    Then, we observe that
    \begin{align*}
        \,&
        \begin{bmatrix}
            P(X_{1},\dots,X_{d})&\\
            &\mbox{\Large 0}_{N-1}
        \end{bmatrix}
        =
        \begin{bmatrix}
            b&\\
            &\mbox{\Large 0}_{N-1}
        \end{bmatrix}
        +
        \sum_{m=1}^{M}\sum_{s=1}^{m}\sum_{\substack{n_{s}\in I(s,\mathbb{N})\\n_{s}(1)+\cdots+n_{s}(s)=m}}\sum_{i_{s}\in\mathrm{Alt}(I(s,d))}\\
        &\quad
        \left[\begin{smallmatrix}
            \begin{matrix}
                1&\cdots&1\\
            \end{matrix}\\
            \mbox{\Large 0}_{N-1,N}
        \end{smallmatrix}\right]
        U^{\eta_{i_{s}(1)}\otimes\mathrm{id}_{N}}_{n_{s}(1)}
        \left(
        \begin{matrix}
        \widetilde{\mathbf{b}}_{i_{s},1,1}\\
        \widetilde{\mathbf{b}}'_{i_{s},1,1}
        \end{matrix}
        ;\cdots;
         \begin{matrix}
            \widetilde{\mathbf{b}}_{i_{s},1,n_{s}(1)}\\
            \widetilde{\mathbf{b}}_{i_{s},1,n_{s}(1)}'
        \end{matrix}
        \right)(X_{i_{s}(1)}\otimes I_{N})\cdots\\
        &\hspace{2cm}\cdots
        U^{\eta_{i_{s}(s)}\otimes\mathrm{id}_{N}}_{n_{s}(s)}
        \left(
        \begin{matrix}
        \widetilde{\mathbf{b}}_{i_{s},s,1}\\
        \widetilde{\mathbf{b}}'_{i_{s},s,1}
        \end{matrix}
        ;\cdots;
         \begin{matrix}
            \widetilde{\mathbf{b}}_{i_{s},s,n_{s}(s)}\\
            \widetilde{\mathbf{b}}_{i_{s},s,n_{s}(s)}'
        \end{matrix}
        \right)(X_{i_{s}(s)}\otimes I_{N})
        \left[\begin{smallmatrix}
            \begin{matrix}
                1\\
                \vdots\\
                1
            \end{matrix}&\mbox{\Large 0}_{N,N-1}
        \end{smallmatrix}\right]
    \end{align*}
    by Lemma \ref{lem_matampChebyshev2}.
    Thus, we can choose $\widetilde{P}$ as an element of $\mathcal{T}_{\eta\otimes\mathrm{id}_{N}}(M_{k}(B))_{\langle d\rangle}$ defined by
    \begin{align*}
        \,&\widetilde{P}(Y_{1},\dots,Y_{d})
        =
        \begin{bmatrix}
            b&\\
            &\mbox{\Large 0}_{N-1}
        \end{bmatrix}
        +
        \sum_{m=1}^{M}\sum_{s=1}^{m}\sum_{\substack{n_{s}\in I(s,\mathbb{N})\\n_{s}(1)+\cdots+n_{s}(s)=m}}\sum_{i_{s}\in\mathrm{Alt}(I(s,d))}\\
        &\quad
        \left[\begin{smallmatrix}
            \begin{matrix}
                1&\cdots&1\\
            \end{matrix}\\
            \mbox{\Large 0}_{N-1,N}
        \end{smallmatrix}\right]
        U^{\eta_{i_{s}(1)}\otimes\mathrm{id}_{N}}_{n_{s}(1)}
        \left(
        \begin{matrix}
        \widetilde{\mathbf{b}}_{i_{s},1,1}\\
        \widetilde{\mathbf{b}}'_{i_{s},1,1}
        \end{matrix}
        ;\cdots;
         \begin{matrix}
            \widetilde{\mathbf{b}}_{i_{s},1,n_{s}(1)}\\
            \widetilde{\mathbf{b}}_{i_{s},1,n_{s}(1)}'
        \end{matrix}
        \right)(Y_{i_{s}(1)})\cdots\\
        &\hspace{2cm}\cdots
        U^{\eta_{i_{s}(s)}\otimes\mathrm{id}_{N}}_{n_{s}(s)}
        \left(
        \begin{matrix}
        \widetilde{\mathbf{b}}_{i_{s},s,1}\\
        \widetilde{\mathbf{b}}'_{i_{s},s,1}
        \end{matrix}
        ;\cdots;
         \begin{matrix}
            \widetilde{\mathbf{b}}_{i_{s},s,n_{s}(s)}\\
            \widetilde{\mathbf{b}}_{i_{s},s,n_{s}(s)}'
        \end{matrix}
        \right)(Y_{i_{s}(s)})
        \left[\begin{smallmatrix}
            \begin{matrix}
                1\\
                \vdots\\
                1
            \end{matrix}&\mbox{\Large 0}_{N,N-1}
        \end{smallmatrix}\right].
    \end{align*}
    
    Also, we can see (2) for the above $\widetilde{P}$ by direct computation using Corollary \ref{cor_orthogonality_chebyshev} and Proposition \ref{prop_chebyshevorthog_formla}. 
\end{proof}

The following theorem is a $B$-valued analogue of Biane's theorem \cite[Theorem 5.1]{b03}:

\begin{Thm}\label{Thm_character_Bsemicircular}
    Let $(S_{1},\dots,S_{d})$ be a $B$-free family of self-adjoint $B$-valued non-commutative random variables in $(A,B,\tau,E)$ with mean $0$ and variance $\eta=(\eta_{1},\dots,\eta_{d})$, respectively. Then, the following are equivalent:
    \begin{enumerate}
        \item $(S_{1},\dots,S_{d})$ is a $B$-valued semi-circular system associated with $\eta_{1},\dots,\eta_{d}$.
        \item We have
        \begin{align*}
        \,&
        \left\|
        P(S_{1},\dots,S_{d})
        -
        E\left[
        P(S_{1},\dots,S_{d})
        \right]
        \right\|_{\tau}^2
        \leq
        \sum_{j=1}^{d}
        \left\|
        \mathrm{ev}_{S}^{\otimes2}
        \left(
        \partial_{j}
        \left[
        P(X_{1},\dots,X_{d})
        \right]
        \right)
        \right\|_{\eta_{j}}^2
        \end{align*}
        for any $P(X_{1},\dots,X_{d})\in\mathcal{T}_{\eta}B_{\langle d\rangle}$.
        \item We have
        \[
        \|P(S_{1},\dots,S_{d})-E[P(S_{1},\dots,S_{d})]\|_{\tau}^2\leq\sum_{j=1}^{d}\|\mathrm{ev}_{S}^{\otimes2}(\partial_{j}[P(X_{1},\dots,X_{d})])\|_{\eta_{j}}^2
        \]
        for any $P\in B_{\langle d\rangle}$.
    \end{enumerate}
\end{Thm}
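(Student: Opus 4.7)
I will establish the cycle $(1)\Rightarrow(2)$, $(1)\Rightarrow(3)$, $(3)\Rightarrow(2)$, $(2)\Rightarrow(1)$.

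\emph{The easy step $(3)\Rightarrow(2)$} is immediate from the inclusion $\mathcal{T}_{\eta}B_{\langle d\rangle}\subset B_{\langle d\rangle}$.

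\emph{For $(1)\Rightarrow(2)$}, I would fix, for a given $P\in\mathcal{T}_{\eta}B_{\langle d\rangle}$, an expression $P=b+\sum_{\alpha}W_{\alpha}$ indexed by distinct $(k,\ell,n,i)$-classes, with $W_{\alpha}$ the corresponding Chebyshev product. Corollary \ref{cor_orthogonality_chebyshev} gives that the evaluations $W_{\alpha}(S)$ are pairwise orthogonal in $\langle\cdot,\cdot\rangle_{\tau}$ and each has $E$-mean zero, so $E[P(S)]=b$ and $\|P(S)-E[P(S)]\|_{\tau}^{2}=\sum_{\alpha}\|W_{\alpha}(S)\|_{\tau}^{2}$. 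For the right-hand side, Corollary \ref{Cor_integralbyparts} (whose tracial compatibility hypothesis on each $\eta_{j}$ is part of the standing assumptions of this section) converts $\sum_{j}\|\mathrm{ev}_{S}^{\otimes 2}(\partial_{j}P)\|_{\eta_{j}}^{2}$ into $\langle\mathrm{ev}_{S}(\sum_{j}\partial_{j}^{*}\partial_{j}P),\mathrm{ev}_{S}(P)\rangle_{\tau}$. Corollary \ref{Cor_numberoperator} then identifies $\sum_{j}\partial_{j}^{*}\partial_{j}W_{\alpha}=k_{\alpha}W_{\alpha}$ with $k_{\alpha}\geq 1$, and orthogonality makes the result equal to $\sum_{\alpha}k_{\alpha}\|W_{\alpha}(S)\|_{\tau}^{2}\geq\sum_{\alpha}\|W_{\alpha}(S)\|_{\tau}^{2}$, which is exactly (2).

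\emph{For $(1)\Rightarrow(3)$}, I would amplify via Proposition \ref{prop_chebyshevorthog_formla}. Given $P\in B_{\langle d\rangle}$, produce $N\in\mathbb{N}$ and $\widetilde{P}\in\mathcal{T}_{\eta\otimes\mathrm{id}_{N}}(M_{N}(B))_{\langle d\rangle}$ with the corner identification $\mathrm{ev}_{S\otimes I_{N}}(\widetilde{P})=\mathrm{diag}(P(S),\mathbf{0}_{N-1})$ and the norm relation $\|\mathrm{ev}_{S\otimes I_{N}}^{\otimes 2}(\partial_{Y_{j}}\widetilde{P})\|_{\eta_{j}\otimes\mathrm{id}_{N}}^{2}=\tfrac{1}{N}\|\mathrm{ev}_{S}^{\otimes 2}(\partial_{j}P)\|_{\eta_{j}}^{2}$. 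By Remark \ref{rem_bncpspre} the amplified tuple $(S_{j}\otimes I_{N})$ is an $M_{N}(B)$-free $M_{N}(B)$-valued semi-circular system in $(M_{N}(A),M_{N}(B),\tau\otimes\mathrm{tr}_{N},E\otimes\mathrm{id}_{N})$, so the already-established $(1)\Rightarrow(2)$ applies to $\widetilde{P}$ in the amplified setting. Because the $L^{2}$-norm of a $1$-corner block is $1/\sqrt{N}$ times the norm of the corner, both sides of the resulting inequality pick up the same factor $1/N$, which cancels to yield (3).

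\emph{For $(2)\Rightarrow(1)$}, by Proposition \ref{Prop_Bsemicircular_equivalent} it suffices to show that every product of Chebyshev evaluations along an alternating index word has vanishing $E$-mean. I would induct on the total degree $k=n(1)+\cdots+n(\ell)$. The cases $k=1,2$ are immediate from the mean-zero and variance hypotheses plus $B$-freeness, and for $\ell\geq 2$ the $B$-freeness combined with vanishing of single-factor expectations (by a secondary induction) reduces everything to showing $\mu_{n}:=E[U_{n}^{\eta_{j}}(\cdots)(S_{j})]=0$ for each $n\geq 3$. I would apply (2) to the single Chebyshev polynomial $P=U_{n}^{\eta_{j}}(\cdots)(X_{j})\in\mathcal{T}_{\eta}B_{\langle d\rangle}$: the left-hand side simplifies to $\|U_{n}^{\eta_{j}}(\cdots)(S_{j})\|_{\tau}^{2}-\|\mu_{n}\|_{\tau}^{2}$, while Proposition \ref{Prop_fromula_fdq_Chebyshev} expands the right-hand side into a sum of $\eta_{j}$-inner products of tensor Chebyshev terms whose off-diagonal contributions involve $E$-means of Chebyshev products of total degree at most $n-1$, and these vanish by the inductive hypothesis. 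The diagonal contributions then match $\|U_{n}^{\eta_{j}}(\cdots)(S_{j})\|_{\tau}^{2}$ exactly (this is the same computation as in $(1)\Rightarrow(2)$, which goes through once the lower-order moments are zero), forcing $\|\mu_{n}\|_{\tau}^{2}\leq 0$; varying the $B$-valued arguments of $U_{n}^{\eta_{j}}$ over a generating family yields $\mu_{n}=0$ identically.

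\emph{Main obstacle.} The hardest implication is clearly $(2)\Rightarrow(1)$: converting the inequality (2) into exact vanishing of $B$-valued Chebyshev expectations requires precisely the bookkeeping that the splitting of $\partial_{j}U_{n}$ given by Proposition \ref{Prop_fromula_fdq_Chebyshev}, evaluated in $\langle\cdot,\cdot\rangle_{\eta_{j}}$, decomposes into diagonal terms that reproduce $\|U_{n}^{\eta_{j}}(\cdots)(S_{j})\|_{\tau}^{2}$ modulo lower-order moments, and off-diagonal terms whose $\eta_{j}$-inner products collapse to Chebyshev moments that the induction has already eliminated. Arranging the inductive hypothesis so that precisely these off-diagonal terms are hit, for every choice of arguments, is the delicate part.
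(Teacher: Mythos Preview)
Your plan for $(1)\Rightarrow(2)$, $(1)\Rightarrow(3)$, and $(3)\Rightarrow(2)$ matches the paper's approach, and your reduction of $(2)\Rightarrow(1)$ to the single-variable statement $E[U_{n}^{\eta_{j}}(\cdots)(S_{j})]=0$ via the standing $B$-freeness hypothesis is a legitimate simplification the paper does not make explicit.

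The single-variable argument itself, however, has a genuine gap. Applying (2) to $P=U_{n}^{\eta_{j}}(\cdots)(X_{j})$ alone cannot force $\mu_{n}=0$, because the inequality has slack proportional to $n-1$. Even granting that the inductive hypothesis kills all off-diagonal $\eta_{j}$-pairings among the terms of Proposition~\ref{Prop_fromula_fdq_Chebyshev} (which is true: for $k\neq\ell$ one of the two factors $E[U_{k-1}^{*}U_{\ell-1}]$, $E[U_{n-k}^{*}U_{n-\ell}]$ expands in Chebyshev degrees $\leq n-1$ with no constant term), the remaining diagonal sum is $\sum_{k=1}^{n}\|U_{k-1}(S_{j})\otimes U_{n-k}(S_{j})\|_{\eta_{j}}^{2}$, which is \emph{not} $\|U_{n}^{\eta_{j}}(\cdots)(S_{j})\|_{\tau}^{2}$. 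In the semi-circular case the computation you cite from $(1)\Rightarrow(2)$ gives precisely $n\cdot\|U_{n}(S_{j})\|_{\tau}^{2}$; for general $S_{j}$ it involves $\|U_{m}(S_{j})\|_{\tau}^{2}$ for $m<n$, quantities the inductive hypothesis does not pin down. Already for $B=\mathbb{C}$, $n=3$, mean $0$, variance $1$, inequality (2) for $U_{3}$ reads $\tau(S^{6})-6\tau(S^{4})+5\leq\mu_{3}^{2}$, which is a \emph{lower} bound on $\mu_{3}^{2}$ and constrains nothing.

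The paper's mechanism is different: one peels off a $U_{1}$-factor via the recursion, writes the target as $U_{1}^{\eta_{i(1)}}\!\bigl(\begin{smallmatrix}b\\b'\end{smallmatrix}\bigr)(S_{i(1)})\cdot W(S)$ modulo terms of lower total degree, sets $\widetilde b=E[U_{1}\cdot W(S)]$, and applies (2) to the \emph{two-term} test polynomial
\[
a\cdot U_{1}^{\eta_{i(1)}}\!\left(\begin{smallmatrix}\widetilde b^{*}b\\b'\end{smallmatrix}\right)(X_{i(1)})+W(X),\qquad a\in\mathbb{C}.
\]
The point is that for $U_{1}$ the two sides of (2) coincide exactly (this is the variance hypothesis $\eta_{j}(b)=E[S_{j}bS_{j}]$), so the $|a|^{2}$ contributions cancel; sending $|a|\to\infty$ along $\mathbb{R}$ and $i\mathbb{R}$ upgrades the inequality on the cross terms to an \emph{equality}. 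That equality, together with Proposition~\ref{Prop_fromula_fdq_Chebyshev} and the inductive hypothesis, evaluates to $\tau(\widetilde b^{*}\widetilde b)=0$, hence $\widetilde b=0$ by faithfulness. Your one-term test never isolates this equality case, which is why the induction stalls.
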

\begin{proof}
    (1)$\Rightarrow$(2): Let $(S_{1},\dots,S_{d})$ be a $B$-free $B$-valued semi-circular system associated with $\eta_{1},\dots,\eta_{d}$. 
    Choose an arbitrary $P(X_{1},\dots,X_{d})\in\mathcal{T}_{\eta}B_{\langle d\rangle}$ and fix an expression of $P(X_{1},\dots,X_{d})$ as follows.
    \begin{align*}
        \,&
        P(X_{1},\dots,X_{d})=
        b+
        \sum_{1\leq k\leq N}
        \sum_{\substack{1\leq \ell\leq k\\i\in\mathrm{Alt}(I(\ell,d))}}
        \sum_{\substack{n\in I(\ell,\mathbb{N})\\n(1)+\cdots+n(\ell)=k}}\\
        &\hspace{1cm}
        U_{n(1)}^{\eta_{i(1)}}
        \left(
        \begin{smallmatrix}
            b^{(k,\ell,1;i)}_{1}\\
            b^{(k,\ell,1;i)'}_{1}
        \end{smallmatrix}
        ;\cdots;
        \begin{smallmatrix}
            b^{(k,\ell,1;i)}_{n(1)}\\
            b^{(k,\ell,1;i)'}_{n(1)}
        \end{smallmatrix}
        \right)
        (X_{i(1)})
        \cdots
        U_{n(\ell)}^{\eta_{i(\ell)}}
        \left(
        \begin{smallmatrix}
            b^{(k,\ell,\ell;i)}_{1}\\
            b^{(k,\ell,\ell;i)'}_{1}
        \end{smallmatrix}
        ;\cdots;
        \begin{smallmatrix}
            b^{(k,\ell,\ell;i)}_{n(\ell)}\\
            b^{(k,\ell,\ell;i)'}_{n(\ell)}
        \end{smallmatrix}
        \right)
        (X_{i(\ell)}).
    \end{align*}
    Moreover, we may assume that $E[P(S_{1},\dots,S_{d})]=0$, that is, $b=0$ without loss of generality. Then, by Corollaries \ref{Cor_integralbyparts} and \ref{Cor_numberoperator}, we have
    \begin{align*}
        \,&
        \sum_{j=1}^{d}
        \left\|
        \mathrm{ev}_{S}^{\otimes2}
        \left(
        \partial_{j}\left[P(X_{1},\dots,X_{d})\right]
        \right)
        \right\|_{\eta_{j}}^2
        =
        \sum_{j=1}^{d}
        \Bigl\langle
        \mathrm{ev}_{S}
        \left(
        \partial_{j}^*
        \left(
        \partial_{j}
        [P(X_{1},\dots,X_{d})]\right)
        \right),
        P(S_{1},\dots,S_{d})
        \Bigr\rangle_{\tau}\\
        &=
        \sum_{j=1}^{d}
        \sum_{k=1}^{N}
        \sum_{\substack{1\leq \ell\leq k\\i\in\mathrm{Alt}(I(\ell,d))}}
        \sum_{\substack{n\in I(\ell,\mathbb{N})\\n(1)+\cdots+n(\ell)=k}}\\
        &\quad
        \Biggl\langle
        \mathrm{ev}_{S}
        \Biggl(
        \partial_{j}^*
        \Biggl[
        \partial_{j}
        \Biggl[
        U_{n(1)}^{\eta_{i(1)}}
        \left(
        \begin{smallmatrix}
            b^{(k,\ell,1;i)}_{1}\\
            b^{(k,\ell,1;i)'}_{1}
        \end{smallmatrix}
        ;\cdots;
        \begin{smallmatrix}
            b^{(k,\ell,1;i)}_{n(1)}\\
            b^{(k,\ell,1;i)'}_{n(1)}
        \end{smallmatrix}
        \right)
        (X_{i(1)})\\
        &\hspace{4cm}
        \cdots
        U_{n(\ell)}^{\eta_{i(\ell)}}
        \left(
        \begin{smallmatrix}
            b^{(k,\ell,\ell;i)}_{1}\\
            b^{(k,\ell,\ell;i)'}_{1}
        \end{smallmatrix}
        ;\cdots;
        \begin{smallmatrix}
            b^{(k,\ell,\ell;i)}_{n(\ell)}\\
            b^{(k,\ell,\ell;i)'}_{n(\ell)}
        \end{smallmatrix}
        \right)
        (X_{i(\ell)})
        \Biggr]
        \Biggr]
        \Biggr),
        P(S_{1},\dots,S_{d})
        \Biggr\rangle_{\tau}\\
        &
        =
        \sum_{k=1}^{N}
        \sum_{\substack{1\leq \ell\leq k\\i\in\mathrm{Alt}(I(\ell,d))}}
        \sum_{\substack{n\in I(\ell,\mathbb{N})\\n(1)+\cdots+n(\ell)=k}}
        \left(\sum_{j=1}^{d}
        \sum_{i(t)=j}n(t)\right)\\
        &\quad\Biggl\langle
        U_{n(1)}^{\eta_{i(1)}}
        \left(
        \begin{smallmatrix}
            b^{(k,\ell,1;i)}_{1}\\
            b^{(k,\ell,1;i)'}_{1}
        \end{smallmatrix}
        ;\cdots;
        \begin{smallmatrix}
            b^{(k,\ell,1;i)}_{n(1)}\\
            b^{(k,\ell,1;i)'}_{n(1)}
        \end{smallmatrix}
        \right)
        (S_{i(1)})\\
        &\hspace{4cm}
        \cdots
        U_{n(\ell)}^{\eta_{i(\ell)}}
        \left(
        \begin{smallmatrix}
            b^{(k,\ell,\ell;i)}_{1}\\
            b^{(k,\ell,\ell;i)'}_{1}
        \end{smallmatrix}
        ;\cdots;
        \begin{smallmatrix}
            b^{(k,\ell,\ell;i)}_{n(\ell)}\\
            b^{(k,\ell,\ell;i)'}_{n(\ell)}
        \end{smallmatrix}
        \right)
        (S_{i(\ell)}),
        P(S_{1},\dots,S_{d})
        \Biggr\rangle_{\tau}\\
        &
        =
        \sum_{k=1}^{N}k
        \sum_{\substack{1\leq \ell\leq k\\i\in\mathrm{Alt}(I(\ell,d))}}
        \sum_{\substack{n\in I(\ell,\mathbb{N})\\n(1)+\cdots+n(\ell)=k}}\\
        &\quad\Biggl\langle
        U_{n(1)}^{\eta_{i(1)}}
        \left(
        \begin{smallmatrix}
            b^{(k,\ell,1;i)}_{1}\\
            b^{(k,\ell,1;i)'}_{1}
        \end{smallmatrix}
        ;\cdots;
        \begin{smallmatrix}
            b^{(k,\ell,1;i)}_{n(1)}\\
            b^{(k,\ell,1;i)'}_{n(1)}
        \end{smallmatrix}
        \right)
        (S_{i(1)})\\
        &\hspace{4cm}
        \cdots
        U_{n(\ell)}^{\eta_{i(\ell)}}
        \left(
        \begin{smallmatrix}
            b^{(k,\ell,\ell;i)}_{1}\\
            b^{(k,\ell,\ell;i)'}_{1}
        \end{smallmatrix}
        ;\cdots;
        \begin{smallmatrix}
            b^{(k,\ell,\ell;i)}_{n(\ell)}\\
            b^{(k,\ell,\ell;i)'}_{n(\ell)}
        \end{smallmatrix}
        \right)
        (S_{i(\ell)}),
        P(S_{1},\dots,S_{d})
        \Biggr\rangle_{\tau}.
    \end{align*}
   Since $(S_{1},\dots,S_{d})$ is a $B$-free $B$-valued semi-circular system associated with $\eta_{1},\dots,\eta_{d}$, Propositions \ref{Prop_BChebyshev_tensor} and \ref{Prop_Bsemicircular_equivalent} enable us to compute
    \begin{align*}
        \,&
        \sum_{j=1}^{d}
        \left\|
        \mathrm{ev}_{S}^{\otimes2}
        \left(
        \partial_{j}\left[P(X_{1},\dots,X_{d})\right]
        \right)
        \right\|_{\eta_{j}}^2
        -
        \|P(S_{1},\dots,S_{d})\|^2_{\tau}\\
        &
        =
        \sum_{k=2}^{N}(k-1)
        \sum_{\substack{1\leq \ell\leq k\\i\in\mathrm{Alt}(I(\ell,d))}}
        \sum_{\substack{n\in I(\ell,\mathbb{N})\\n(1)+\cdots+n(\ell)=k}}\\
        &\quad\Biggl\|
        U_{n(1)}^{\eta_{i(1)}}
        \left(
        \begin{smallmatrix}
            b^{(k,\ell,1;i)}_{1}\\
            b^{(k,\ell,1;i)'}_{1}
        \end{smallmatrix}
        ;\cdots;
        \begin{smallmatrix}
            b^{(k,\ell,1;i)}_{n(1)}\\
            b^{(k,\ell,1;i)'}_{n(1)}
        \end{smallmatrix}
        \right)
        (S_{i(1)})
        \cdots
        U_{n(\ell)}^{\eta_{i(\ell)}}
        \left(
        \begin{smallmatrix}
            b^{(k,\ell,\ell;i)}_{1}\\
            b^{(k,\ell,\ell;i)'}_{1}
        \end{smallmatrix}
        ;\cdots;
        \begin{smallmatrix}
            b^{(k,\ell,\ell;i)}_{n(\ell)}\\
            b^{(k,\ell,\ell;i)'}_{n(\ell)}
        \end{smallmatrix}
        \right)
        (S_{i(\ell)})
        \Biggr\|^2_{\tau}\\
        &\geq0,
    \end{align*}
    where the feature of the expression of $P(X_{1},\dots,X_{d})$ (that is, each $(k,\ell,n,i)$-term can appear at most one time) was used in the equality (see Remark \ref{Rem_expression}).
    Thus, we obtain
    \[
    \|P(S_{1},\dots,S_{d})\|^2_{\tau}
    \leq
    \sum_{j=1}^{d}
    \left\|
    \mathrm{ev}_{S}^{\otimes2}
    \left(
    \partial_{j}\left[P(X_{1},\dots,X_{d})\right]
    \right)
    \right\|_{\eta_{j}}^2
    \]
    for any $P\in\mathcal{T}_{\eta}B_{\langle d\rangle}$ with $E[P(S_{1},\dots,S_{d})]=0$.

    (2)$\Rightarrow$(1): By Proposition \ref{Prop_Bsemicircular_equivalent}, it suffices to see that
    \begin{align}
    E
    \left[
    U_{n(1)}^{\eta_{i(1)}}
    \left(
    \begin{smallmatrix}
        b^{(1)}_{1}\\
        b^{(1)'}_{1}
    \end{smallmatrix}
    ;\cdots;
    \begin{smallmatrix}
        b^{(1)}_{n(1)}\\
        b^{(1)'}_{n(1)}
    \end{smallmatrix}
    \right)
    (S_{i(1)})
    \cdots
    U_{n(k)}^{\eta_{i(k)}}
    \left(
    \begin{smallmatrix}
        b^{(k)}_{1}\\
        b^{(k)'}_{1}
    \end{smallmatrix}
    ;\cdots;
    \begin{smallmatrix}
        b^{(k)}_{n(k)}\\
        b^{(k)'}_{n(k)}
    \end{smallmatrix}
    \right)
    (S_{i(k)})
    \right]
    =0 \tag{$*$}
    \end{align}
    for any $k\in\mathbb{N}$, $n\in I(k,\mathbb{N})$, any $i\in \mathrm{Alt}(I(k,d))$ and any $(b^{(j)}_{\ell},b^{(j)'}_{\ell})\in B\times B$ ($j\in[k]$ and $\ell\in[n(j)]$). The case when $n(1)+\cdots +n(k)\leq2$ immediately follows from the assumption that the mean of $S_{j}$ is $0$ and $S_{1},\dots,S_{d}$ are $B$-freely independent. 
    
    Suppose that we have already shown the desired ($*$) when $n(1)+\cdots+n(k)\leq N$ for some $N\in\mathbb{N}$ with $N\geq2$.
    Then, we have to see that
    \[
    E
    \left[
    U_{n(1)}^{\eta_{i(1)}}
    \left(
    \begin{smallmatrix}
        b^{(1)}_{1}\\
        b^{(1)'}_{1}
    \end{smallmatrix}
    ;\cdots;
    \begin{smallmatrix}
        b^{(1)}_{n(1)}\\
        b^{(1)'}_{n(1)}
    \end{smallmatrix}
    \right)
    (S_{i(1)})
    \cdots
    U_{n(k)}^{\eta_{i(k)}}
    \left(
    \begin{smallmatrix}
        b^{(k)}_{1}\\
        b^{(k)'}_{1}
    \end{smallmatrix}
    ;\cdots;
    \begin{smallmatrix}
        b^{(k)}_{n(k)}\\
        b^{(k)'}_{n(k)}
    \end{smallmatrix}
    \right)
    (S_{i(k)})
    \right]
    =0
    \]
    for any $n\in I(k,\mathbb{N})$ with $n(1)+\cdots+n(k)=N+1$. The proof is divided into the following three cases: (\textbf{I}) $n(1)\geq3$, (\textbf{II}) $n(1)=1$, (\textbf{III}) $n(1)=2$.

    (\textbf{I}) If $n(1)\geq3$, then, using the recursion formula in Definition \ref{Def_B_chebyshev}, we have
    \begin{align*}
        \,&
        U_{n(1)}^{\eta_{i(1)}}
        \left(
        \begin{smallmatrix}
            b^{(1)}_{1}\\
            b^{(1)'}_{1}
        \end{smallmatrix}
        ;\cdots;
        \begin{smallmatrix}
            b^{(1)}_{n(1)}\\
            b^{(1)'}_{n(1)}
        \end{smallmatrix}
        \right)
        (S_{i(1)})
        \cdots
        U_{n(k)}^{\eta_{i(k)}}
        \left(
        \begin{smallmatrix}
            b^{(k)}_{1}\\
            b^{(k)'}_{1}
        \end{smallmatrix}
        ;\cdots;
        \begin{smallmatrix}
            b^{(k)}_{n(k)}\\
            b^{(k)'}_{n(k)}
        \end{smallmatrix}
        \right)
        (S_{i(k)})\\
        &=
        U_{1}^{\eta_{i(1)}}
        \left(
        \begin{smallmatrix}
            b^{(1)}_{1}\\
            b^{(1)'}_{1}
        \end{smallmatrix}
        \right)
        (S_{i(1)})
        U_{n(1)-1}^{\eta_{i(1)}}
        \left(
        \begin{smallmatrix}
            b^{(1)}_{2}\\
            b^{(1)'}_{2}
        \end{smallmatrix}
        ;\cdots;
        \begin{smallmatrix}
            b^{(1)}_{n(1)}\\
            b^{(1)'}_{n(1)}
        \end{smallmatrix}
        \right)
        (S_{i(1)})
        \cdots
        U_{n(k)}^{\eta_{i(k)}}
        \left(
        \begin{smallmatrix}
            b^{(k)}_{1}\\
            b^{(k)'}_{1}
        \end{smallmatrix}
        ;\cdots;
        \begin{smallmatrix}
            b^{(k)}_{n(k)}\\
            b^{(k)'}_{n(k)}
        \end{smallmatrix}
        \right)
        (S_{i(k)})\\
        &\quad-b^{(1)}_{1}\eta_{i(1)}\left(b^{(1)'}_{1}b^{(1)}_{2}\right)b^{(1)'}_{2}\\
        &\quad\times
        U_{n(1)-2}^{\eta_{i(1)}}
        \left(
        \begin{smallmatrix}
            b^{(1)}_{3}\\
            b^{(1)'}_{3}
        \end{smallmatrix}
        ;\cdots;
        \begin{smallmatrix}
            b^{(1)}_{n(1)}\\
            b^{(1)'}_{n(1)}
        \end{smallmatrix}
        \right)
        (S_{i(1)})
        \cdots
        U_{n(k)}^{\eta_{i(k)}}
        \left(
        \begin{smallmatrix}
            b^{(k)}_{1}\\
            b^{(k)'}_{1}
        \end{smallmatrix}
        ;\cdots;
        \begin{smallmatrix}
            b^{(k)}_{n(k)}\\
            b^{(k)'}_{n(k)}
        \end{smallmatrix}
        \right)
        (S_{i(k)}).
    \end{align*}
    Since $(n(1)-2)+n(2)+\cdots+n(k)=N-1\leq N$, we have
    \begin{align*}
        \,&
        E
        \left[
        U_{n(1)-2}^{\eta_{i(1)}}
        \left(
        \begin{smallmatrix}
            b^{(1)}_{3}\\
            b^{(1)'}_{3}
        \end{smallmatrix}
        ;\cdots;
        \begin{smallmatrix}
            b^{(1)}_{n(1)}\\
            b^{(1)'}_{n(1)}
        \end{smallmatrix}
        \right)
        (S_{i(1)})
        \cdots
        U_{n(k)}^{\eta_{i(k)}}
        \left(
        \begin{smallmatrix}
            b^{(k)}_{1}\\
            b^{(k)'}_{1}
        \end{smallmatrix}
        ;\cdots;
        \begin{smallmatrix}
            b^{(k)}_{n(k)}\\
            b^{(k)'}_{n(k)}
        \end{smallmatrix}
        \right)
        (S_{i(k)})
        \right]=0.
    \end{align*}
    Therefore, we have to see that
    \begin{align*}
        \,&
        E
        \left[
        U_{1}^{\eta_{i(1)}}
        \left(
        \begin{smallmatrix}
            b^{(1)}_{1}\\
            b^{(1)'}_{1}
        \end{smallmatrix}
        \right)
        (S_{i(1)})
        U_{n(1)-1}^{\eta_{i(1)}}
        \left(
        \begin{smallmatrix}
            b^{(1)}_{2}\\
            b^{(1)'}_{2}
        \end{smallmatrix}
        ;\cdots;
        \begin{smallmatrix}
            b^{(1)}_{n(1)}\\
            b^{(1)'}_{n(1)}
        \end{smallmatrix}
        \right)
        (S_{i(1)})
        \cdots
        U_{n(k)}^{\eta_{i(k)}}
        \left(
        \begin{smallmatrix}
            b^{(k)}_{1}\\
            b^{(k)'}_{1}
        \end{smallmatrix}
        ;\cdots;
        \begin{smallmatrix}
            b^{(k)}_{n(k)}\\
            b^{(k)'}_{n(k)}
        \end{smallmatrix}
        \right)
        (S_{i(k)})
        \right]\\
        &=0.
    \end{align*} 
    
    Let us set
    \begin{align*}
        \,&
        W(X_{1},\dots,X_{d}):=
        U_{n(1)-1}^{\eta_{i(1)}}
        \left(
        \begin{smallmatrix}
            b^{(1)}_{2}\\
            b^{(1)'}_{2}
        \end{smallmatrix}
        ;\cdots;
        \begin{smallmatrix}
            b^{(1)}_{n(1)}\\
            b^{(1)'}_{n(1)}
        \end{smallmatrix}
        \right)
        (X_{i(1)})
        \cdots
        U_{n(k)}^{\eta_{i(k)}}
        \left(
        \begin{smallmatrix}
            b^{(k)}_{1}\\
            b^{(k)'}_{1}
        \end{smallmatrix}
        ;\cdots;
        \begin{smallmatrix}
            b^{(k)}_{n(k)}\\
            b^{(k)'}_{n(k)}
        \end{smallmatrix}
        \right)
        (X_{i(k)})
    \end{align*}
    and
    \begin{align*}
        \,&\widetilde{b}:=
        E
        \left[
        U_{1}^{\eta_{i(1)}}
        \left(
        \begin{smallmatrix}
            b^{(1)}_{1}\\
            b^{(1)'}_{1}
        \end{smallmatrix}
        \right)
        (S_{i(1)})
        W(S_{1},\dots,S_{d})
        \right].
    \end{align*}
    
    Consider
    \[
    \left\|
    a\cdot
    U_{1}^{\eta_{i(1)}}
    \left(
    \begin{smallmatrix}
        \widetilde{b}^*b^{(1)}_{1}\\
        b^{(1)'}_{1}
    \end{smallmatrix}
    \right)
    (S_{i(1)})
    +
    W(S_{1},\dots,S_{d})
    \right\|_{\tau},
    \]
    where $a$ is an arbitrary complex number. Then, by assumption (2), we have
    \begin{align*}
        \,&
        \left\|
        a\cdot
        U_{1}^{\eta_{i(1)}}
        \left(
        \begin{smallmatrix}
            \widetilde{b}^*b^{(1)}_{1}\\
            b^{(1)'}_{1}
        \end{smallmatrix}
        \right)
        (S_{i(1)})
        +
        W(S_{1},\dots,S_{d})
        \right\|_{\tau}^2\\
        &\leq
        \sum_{j=1}^{d}
        \left\|
        \mathrm{ev}_{S}^{\otimes2}
        \left(
        \partial_{j}
        \left[
        a\cdot
        U_{1}^{\eta_{i(1)}}
        \left(
        \begin{smallmatrix}
            \widetilde{b}^*b^{(1)}_{1}\\
            b^{(1)'}_{1}
        \end{smallmatrix}
        \right)
        (X_{i(1)})
        +
        W(X_{1},\dots,X_{d})
        \right]
        \right)
        \right\|_{\eta_{j}}^2.
    \end{align*}
    The left-hand side is equal to
    \begin{align*}
        \,&
        |a|^2\cdot
        \left\|
        U_{1}^{\eta_{i(1)}}
        \left(
        \begin{smallmatrix}
            \widetilde{b}^*b^{(1)}_{1}\\
            b^{(1)}_{1}
        \end{smallmatrix}
        \right)
        (S_{i(1)})
        \right\|^2_{\tau}+
        \left\|
        W(S_{1},\dots,S_{d})
        \right\|_{\tau}^2\\
        &\quad+
        2\mathrm{Re}
        \left(
        a\cdot
        \left\langle
        W(S_{1},\dots,S_{d})^*,
        U_{1}^{\eta_{i(1)}}
        \left(
        \begin{smallmatrix}
            \widetilde{b}^*b^{(1)}_{1}\\
            b^{(1)'}_{1}
        \end{smallmatrix}
        \right)
        (S_{i(1)})
        \right\rangle_{\tau}
        \right),
    \end{align*}
    and the right-hand side is equal to
    \begin{align*}
        \,&
        \sum_{j=1}^{d}
        \Biggl(
        |a|^2\cdot
        \left\|
        \mathrm{ev}_{S}^{\otimes2}
        \left(
        \partial_{j}
        \left[
        U_{1}^{\eta_{i(1)}}
        \left(
        \begin{smallmatrix}
            \widetilde{b}^*b^{(1)}_{1}\\
            b^{(1)'}_{1}
        \end{smallmatrix}
        \right)
        (X_{i(1)})
        \right]
        \right)
        \right\|_{\eta_{j}}^2\\
        &+
        \left\|
        \mathrm{ev}_{S}^{\otimes2}
        \left(
        \partial_{j}
        \left[
        W(X_{1},\dots,X_{d})
        \right]
        \right)
        \right\|_{\eta_{j}}^2\\
        &+2\mathrm{Re}
        \left(
        a\cdot
        \left\langle
        \mathrm{ev}_{S}^{\otimes2}
        \bigl(
        \partial_{j}
        \left[
        W(X_{1},\dots,X_{d})
        \right]
        \bigr)^*,
        \mathrm{ev}_{S}^{\otimes2}
        \left(
        \partial_{j}
        \left[
        U_{1}^{\eta_{i(1)}}
        \left(
        \begin{smallmatrix}
            \widetilde{b}^*b^{(1)}_{1}\\
            b^{(1)}_{1}
        \end{smallmatrix}
        \right)
        (X_{i(1)})
        \right]
        \right)
        \right\rangle_{\eta_{j}}
        \right)
        \Biggr)\\
        &=
        |a|^2\cdot
        \left\|
        \mathrm{ev}_{S}^{\otimes2}
        \left(
        \partial_{i(1)}
        \left[
        U_{1}^{\eta_{i(1)}}
        \left(
        \begin{smallmatrix}
            \widetilde{b}^*b^{(1)}_{1}\\
            b^{(1)'}_{1}
        \end{smallmatrix}
        \right)
        (X_{i(1)})
        \right]
        \right)
        \right\|_{\eta_{i(1)}}^2\\
        &\quad+\sum_{j=1}^{d}
        \left\|
        \mathrm{ev}_{S}^{\otimes2}
        \left(
        \partial_{j}
        \left[
        W(X_{1},\dots,X_{d})
        \right]
        \right)
        \right\|_{\eta_{j}}^2\\
        &\quad
        +2\mathrm{Re}
        \left(
        a\cdot
        \left\langle
        \mathrm{ev}_{S}^{\otimes2}
        \bigl(
        \partial_{i(1)}
        \left[
        W(X_{1},\dots,X_{d})
        \right]
        \bigr)^*,
        \mathrm{ev}_{S}^{\otimes2}
        \left(
        \partial_{i(1)}
        \left[
        U_{1}^{\eta_{i(1)}}
        \left(
        \begin{smallmatrix}
            \widetilde{b}^*b^{(1)}_{1}\\
            b^{(1)}_{1}
        \end{smallmatrix}
        \right)
        (X_{i(1)})
        \right]
        \right)
        \right\rangle_{\eta_{i(1)}}
        \right).
    \end{align*}
    Note that
    \begin{align*}
        \left\|
        U_{1}^{\eta_{i(1)}}
        \left(
        \begin{smallmatrix}
            \widetilde{b}^*b^{(1)}_{1}\\
            b^{(1)'}_{1}
        \end{smallmatrix}
        \right)
        (S_{i(1)})
        \right\|^2_{\tau}
        &=
        \left\langle
        \widetilde{b}^*b^{(1)}_{1}S_{i(1)}b^{(1)'}_{1},
        \widetilde{b}^*b^{(1)}_{1}S_{i(1)}b^{(1)'}_{1}
        \right\rangle_{\tau}\\
        &=
        \tau
        \left(
        (b^{(1)'}_{1})^{*}S_{i(1)}(b^{(1)}_{1})^*\widetilde{b}
        \widetilde{b}^*b^{(1)}_{1}S_{i(1)}b^{(1)'}_{1}
        \right)\\
        &=
        \tau
        \left(
        E\left[
        (b^{(1)'}_{1})^{*}S_{i(1)}(b^{(1)}_{1})^*\widetilde{b}
        \widetilde{b}^*b^{(1)}_{1}S_{i(1)}b^{(1)'}_{1}
        \right]\right)\\
        &=\tau\left(
        (b_{1}')^*\eta_{i(1)}\left((b^{(1)}_{1})^*\widetilde{b}\widetilde{b}^*b^{(1)}_{1}\right)b^{(1)'}_{1}
        \right)\\
        &=
        \left\langle
        \widetilde{b}^*b^{(1)}_{1}\otimes b^{(1)'}_{1},
        \widetilde{b}^*b^{(1)}_{1}\otimes b^{(1)}_{1}
        \right\rangle_{\eta_{j}}\\
        &=
        \left\|
        \mathrm{ev}_{S}^{\otimes2}
        \left(
        \partial_{i(1)}
        \left[
        U_{1}^{\eta_{i(1)}}
        \left(
        \begin{smallmatrix}
            \widetilde{b}^*b^{(1)}_{1}\\
            b^{(1)}_{1}
        \end{smallmatrix}
        \right)
        (X_{i(1)})
        \right]
        \right)
        \right\|^2_{\eta_{i(1)}}.
    \end{align*}
    Thus, we have
    \begin{align*}
        \,&
        \left\|
        W(S_{1},\dots,S_{d})
        \right\|_{\tau}^2
        +
        2\mathrm{Re}
        \left(
        a\cdot
        \left\langle
        W(S_{1},\dots,S_{d})^*,
        U_{1}^{\eta_{i(1)}}
        \left(
        \begin{smallmatrix}
            \widetilde{b}^*b^{(1)}_{1}\\
            b^{(1)'}_{1}
        \end{smallmatrix}
        \right)
        (S_{i(1)})
        \right\rangle_{\tau}
        \right)\\
        &\leq
        \sum_{j=1}^{d}
        \left\|
        \mathrm{ev}_{S}^{\otimes2}
        \left(
        \partial_{j}
        \left[
        W(X_{1},\dots,X_{d})
        \right]
        \right)
        \right\|_{\eta_{j}}^2\\
        &\quad
        +
        2\mathrm{Re}
        \left(
        a\cdot
        \left\langle
        \mathrm{ev}_{S}^{\otimes2}
        \bigl(
        \partial_{i(1)}
        \left[
        W(X_{1},\dots,X_{d})
        \right]
        \bigr)^*,
        \mathrm{ev}_{S}^{\otimes2}
        \left(
        \partial_{i(1)}
        \left[
        U_{1}^{\eta_{i(1)}}
        \left(
        \begin{smallmatrix}
            \widetilde{b}^*b^{(1)}_{1}\\
            b^{(1)}_{1}
        \end{smallmatrix}
        \right)
        (X_{i(1)})
        \right]
        \right)
        \right\rangle_{\eta_{i(1)}}
        \right).
    \end{align*}
    
    By assumption (2), we also have
    \begin{align*}
        \,&
        \left\|
        W(S_{1},\dots,S_{d})
        \right\|_{\tau}^2
        \leq
        \sum_{j=1}^{d}
        \left\|
        \mathrm{ev}_{S}^{\otimes2}
        \left(
        \partial_{j}
        \left[
        W(X_{1},\dots,X_{d})
        \right]
        \right)
        \right\|_{\eta_{j}}^2,
    \end{align*}
    and hence, considering $a\in\mathrm{R}$ of sufficiently large absolute value, the following holds:
    \begin{align*}
        \,&
        \mathrm{Re}
        \left(
        \left\langle
        W(S_{1},\dots,S_{d})^*,
        U_{1}^{\eta_{i(1)}}
        \left(
        \begin{smallmatrix}
            \widetilde{b}^*b^{(1)}_{1}\\
            b^{(1)'}_{1}
        \end{smallmatrix}
        \right)
        (S_{i(1)})
        \right\rangle_{\tau}
        \right)\\
        &=
        \mathrm{Re}
        \left(
        \left\langle
        \mathrm{ev}_{S}^{\otimes2}
        \bigl(
        \partial_{i(1)}
        \left[
        W(X_{1},\dots,X_{d})
        \right]
        \bigr)^*,
        \mathrm{ev}_{S}^{\otimes2}
        \left(
        \partial_{i(1)}
        \left[
        U_{1}^{\eta_{i(1)}}
        \left(
        \begin{smallmatrix}
            \widetilde{b}^*b^{(1)}_{1}\\
            b^{(1)}_{1}
        \end{smallmatrix}
        \right)
        (X_{i(1)})
        \right]
        \right)
        \right\rangle_{\eta_{i(1)}}
        \right).
    \end{align*}
    Similarly, considering $a\in i\mathbb{R}$ of sufficiently large absolute value, we also have
    \begin{align*}
        \,&
        \mathrm{Im}
        \left(
        \left\langle
        W(S_{1},\dots,S_{d})^*,
        U_{1}^{\eta_{i(1)}}
        \left(
        \begin{smallmatrix}
            \widetilde{b}^*b^{(1)}_{1}\\
            b^{(1)'}_{1}
        \end{smallmatrix}
        \right)
        (S_{i(1)})
        \right\rangle_{\tau}
        \right)\\
        &=
        \mathrm{Im}
        \left(
        \left\langle
        \mathrm{ev}_{S}^{\otimes2}
        \bigl(
        \partial_{i(1)}
        \left[
        W(X_{1},\dots,X_{d})
        \right]
        \bigr)^*,
        \mathrm{ev}_{S}^{\otimes2}
        \left(
        \partial_{i(1)}
        \left[
        U_{1}^{\eta_{i(1)}}
        \left(
        \begin{smallmatrix}
            \widetilde{b}^*b^{(1)}_{1}\\
            b^{(1)}_{1}
        \end{smallmatrix}
        \right)
        (X_{i(1)})
        \right]
        \right)
        \right\rangle_{\eta_{i(1)}}
        \right).
    \end{align*}
    and hence
    \begin{align}
        \,&
        \left\langle
        W(S_{1},\dots,S_{d})^*,
        U_{1}^{\eta_{i(1)}}
        \left(
        \begin{smallmatrix}
            \widetilde{b}^*b^{(1)}_{1}\\
            b^{(1)'}_{1}
        \end{smallmatrix}
        \right)
        (S_{i(1)})
        \right\rangle_{\tau}\\
        &=
        \left\langle
        \mathrm{ev}_{S}^{\otimes2}
        \bigl(
        \partial_{i(1)}
        \left[
        W(X_{1},\dots,X_{d})
        \right]
        \bigr)^*,
        \mathrm{ev}_{S}^{\otimes2}
        \left(
        \partial_{i(1)}
        \left[
        U_{1}^{\eta_{i(1)}}
        \left(
        \begin{smallmatrix}
            \widetilde{b}^*b^{(1)}_{1}\\
            b^{(1)}_{1}
        \end{smallmatrix}
        \right)
        (X_{i(1)})
        \right]
        \right)
        \right\rangle_{\eta_{i(1)}} \tag{$**$}.
    \end{align}
    Therefore, we obtain
    \begin{align*}
        \,&
        \tau\left(
        \widetilde{b}^*\widetilde{b}
        \right)\\
        &=
        \tau
        \left(
        E\left[
        U_{1}^{\eta_{i(1)}}
        \left(
        \begin{smallmatrix}
            \widetilde{b}^*b^{(1)}_{1}\\
            b^{(1)}_{1}
        \end{smallmatrix}
        \right)
        (S_{i(1)})
        W(S_{1},\dots,S_{d})
        \right]
        \right)\\
        &=
        \tau\left(
        U_{1}^{\eta_{i(1)}}
        \left(
        \begin{smallmatrix}
            \widetilde{b}^*b^{(1)}_{1}\\
            b^{(1)}_{1}
        \end{smallmatrix}
        \right)
        (S_{i(1)})
        W(S_{1},\dots,S_{d})
        \right)\\
        &=
        \tau\left(
        W(S_{1},\dots,S_{d})
        U_{1}^{\eta_{i(1)}}
        \left(
        \begin{smallmatrix}
            \widetilde{b}^*b^{(1)}_{1}\\
            b^{(1)}_{1}
        \end{smallmatrix}
        \right)
        (S_{i(1)})
        \right)\\
        &=\left\langle
        W(S_{1},\dots,S_{d})^*,
        U_{1}^{\eta_{i(1)}}
        \left(
        \begin{smallmatrix}
            \widetilde{b}^*b^{(1)}_{1}\\
            b^{(1)}_{1}
        \end{smallmatrix}
        \right)
        (S_{i(1)})
        \right\rangle_{\tau}\\
        &=
        \left\langle
        \mathrm{ev}_{S}^{\otimes2}
        \bigl(
        \partial_{i(1)}
        \left[
        W(X_{1},\dots,X_{d})
        \right]
        \bigr)^*,
        \mathrm{ev}_{S}^{\otimes2}
        \left(
        \partial_{i(1)}
        \left[
        U_{1}^{\eta_{i(1)}}
        \left(
        \begin{smallmatrix}
            \widetilde{b}^*b^{(1)}_{1}\\
            b^{(1)}_{1}
        \end{smallmatrix}
        \right)
        (X_{i(1)})
        \right]
        \right)
        \right\rangle_{\eta_{i(1)}}\\
        &=
        \left\langle
        \mathrm{ev}_{S}^{\otimes2}
        \bigl(
        \partial_{i(1)}
        \left[
        W(X_{1},\dots,X_{d})
        \right]
        \bigr)^*,
        \widetilde{b}^*b^{(1)}_{1}\otimes b^{(1)'}_{1}
        \right\rangle_{\eta_{i(1)}},
    \end{align*}
    where the $\tau$-preserving property of $E$, the tracial property of $\tau$ and the formula ($**$) were used in the second, the third and the fifth equalities, respectively. 
    Remark that
    \begin{align*}
        \,&
        \partial_{i(1)}
        \left[
        W(X_{1},\dots,X_{d})
        \right]\\
        &=
        \partial_{i(1)}
        \left[
        U_{n(1)-1}^{\eta_{i(1)}}
        \left(
        \begin{smallmatrix}
            b^{(1)}_{2}\\
            b^{(1)'}_{2}
        \end{smallmatrix}
        ;\cdots;
        \begin{smallmatrix}
            b^{(1)}_{n(1)}\\
            b^{(1)'}_{n(1)}
        \end{smallmatrix}
        \right)
        (X_{i(1)})
        \right]\\
        &\quad\times
        \left(
        1\otimes
        U_{n(2)}^{\eta_{i(2)}}
        \left(
        \begin{smallmatrix}
            b^{(2)}_{1}\\
            b^{(2)'}_{1}
        \end{smallmatrix}
        ;\cdots;
        \begin{smallmatrix}
            b^{(2)}_{n(2)}\\
            b^{(2)'}_{n(2)}
        \end{smallmatrix}
        \right)
        (X_{i(2)})
        \cdots
        U_{n(k)}^{\eta_{i(k)}}
        \left(
        \begin{smallmatrix}
            b^{(k)}_{1}\\
            b^{(k)'}_{1}
        \end{smallmatrix}
        ;\cdots;
        \begin{smallmatrix}
            b^{(k)}_{n(k)}\\
            b^{(k)'}_{n(k)}
        \end{smallmatrix}
        \right)
        (X_{i(k)})
        \right)\\
        &+\sum_{\substack{2\leq m\leq k\\i(m)=i(1)}}
        \Biggl(
        U_{n(1)-1}^{\eta_{i(1)}}
        \left(
        \begin{smallmatrix}
            b^{(1)}_{2}\\
            b^{(1)'}_{2}
        \end{smallmatrix}
        ;\cdots;
        \begin{smallmatrix}
            b^{(1)}_{n(1)}\\
            b^{(1)'}_{n(1)}
        \end{smallmatrix}
        \right)
        (X_{i(1)})
        U_{n(2)}^{\eta_{i(2)}}
        \left(
        \begin{smallmatrix}
            b^{(2)}_{1}\\
            b^{(2)'}_{1}
        \end{smallmatrix}
        ;\cdots;
        \begin{smallmatrix}
            b^{(2)}_{n(2)}\\
            b^{(2)'}_{n(2)}
        \end{smallmatrix}
        \right)
        (X_{i(2)})
        \cdots\\
        &\hspace{5cm}\cdots
        U_{n(m-1)}^{\eta_{i(m-1)}}
        \left(
        \begin{smallmatrix}
            b^{(m-1)}_{1}\\
            b^{(m-1)'}_{1}
        \end{smallmatrix}
        ;\cdots;
        \begin{smallmatrix}
            b^{(m-1)}_{n(m-1)}\\
            b^{(m-1)'}_{n(m-1)}
        \end{smallmatrix}
        \right)
        (X_{i(m-1)})
        \otimes1
        \Biggr)\\
        &\quad\times
        \partial_{i(1)}
        \left[
        U_{n(m)}^{\eta_{i(m)}}
        \left(
        \begin{smallmatrix}
            b^{(m)}_{1}\\
            b^{(m)'}_{1}
        \end{smallmatrix}
        ;\cdots;
        \begin{smallmatrix}
            b^{(m)}_{n(m)}\\
            b^{(m)'}_{n(m)}
        \end{smallmatrix}
        \right)
        (X_{i(m)})
        \right]\\
        &\quad\times
        \left(
        1\otimes
        U_{n(m+1)}^{\eta_{i(m+1)}}
        \left(
        \begin{smallmatrix}
            b^{(m+1)}_{1}\\
            b^{(m+1)'}_{1}
        \end{smallmatrix}
        ;\cdots;
        \begin{smallmatrix}
            b^{(m+1)}_{n(m+1)}\\
            b^{(m+1)'}_{n(m+1)}
        \end{smallmatrix}
        \right)
        (X_{i(m+1)})
        \cdots
        U_{n(k)}^{\eta_{i(k)}}
        \left(
        \begin{smallmatrix}
            b^{(k)}_{1}\\
            b^{(k)'}_{1}
        \end{smallmatrix}
        ;\cdots;
        \begin{smallmatrix}
            b^{(k)}_{n(k)}\\
            b^{(k)'}_{n(k)}
        \end{smallmatrix}
        \right)
        (X_{i(k)})
        \right).
    \end{align*}
    By Proposition \ref{Prop_fromula_fdq_Chebyshev} and the induction hypothesis, it easily follows that
    \[
    \tau\left(\widetilde{b}^*\widetilde{b}\right)
    =
    \left\langle
    \mathrm{ev}_{S}^{\otimes2}
    \bigl(
    \partial_{i(1)}
    \left[
    W(X_{1},\dots,X_{d})
    \right]
    \bigr)^*,
    \widetilde{b}^*b^{(1)}_{1}\otimes b^{(1)'}_{1}
    \right\rangle_{\eta_{i(1)}}
    =0.
    \]
    By the faithfulness, we obtain
    \begin{align*}
        \,&
        E
        \left[
        U_{1}^{\eta_{i(1)}}
        \left(
        \begin{smallmatrix}
            b^{(1)}_{1}\\
            b^{(1)'}_{1}
        \end{smallmatrix}
        \right)
        (S_{i(1)})
        W(S_{1},\dots,S_{d})
        \right]=0
    \end{align*}
    as desired, where
    \[
    W(X_{1},\dots,X_{d})
    =
    U_{n(1)-1}^{\eta_{i(1)}}
    \left(
    \begin{smallmatrix}
        b^{(1)}_{2}\\
        b^{(1)'}_{2}
    \end{smallmatrix}
    ;\cdots;
    \begin{smallmatrix}
        b^{(1)}_{n(1)}\\
        b^{(1)'}_{n(1)}
    \end{smallmatrix}
    \right)
    (X_{i(1)})
    \cdots
    U_{n(k)}^{\eta_{i(k)}}
    \left(
    \begin{smallmatrix}
        b^{(k)}_{1}\\
        b^{(k)'}_{1}
    \end{smallmatrix}
    ;\cdots;
    \begin{smallmatrix}
        b^{(k)}_{n(k)}\\
        b^{(k)'}_{n(k)}
    \end{smallmatrix}
    \right)
    (X_{i(k)}).
    \]
    Thus, the proof when $n(1)\geq3$ has been completed by induction.

    (\textbf{II}) Assume $n(1)=1$. Let us set
    \[
    Z(X_{1},\dots,X_{d})
    =
    U_{n(2)}^{\eta_{i(2)}}
    \left(
    \begin{smallmatrix}
        b^{(2)}_{1}\\
        b^{(2)'}_{1}
    \end{smallmatrix}
    ;\cdots;
    \begin{smallmatrix}
        b^{(2)}_{n(2)}\\
        b^{(2)'}_{n(2)}
    \end{smallmatrix}
    \right)
    (X_{i(2)})
    \cdots
    U_{n(k)}^{\eta_{i(k)}}
    \left(
    \begin{smallmatrix}
        b^{(k)}_{1}\\
        b^{(k)'}_{1}
    \end{smallmatrix}
    ;\cdots;
    \begin{smallmatrix}
        b^{(k)}_{n(k)}\\
        b^{(k)'}_{n(k)}
    \end{smallmatrix}
    \right)
    (X_{i(k)})
    \]
    and
    \[
    \widetilde{c}
    =
    E\left[
    U_{1}^{\eta_{i(1)}}
    \left(
    \begin{smallmatrix}
        b^{(1)}_{1}\\
        b^{(1)'}_{1}
    \end{smallmatrix}
    \right)
    (S_{i(1)})
    Z(S_{1},\dots,S_{d})
    \right].
    \]
    Then, we have to see that $\widetilde{c}=0$. This follows from the argument of (\textbf{I}) with replacing $\widetilde{b}$ and 
    \[
    \left\|
    a\cdot
    U_{1}^{\eta_{i(1)}}
    \left(
    \begin{smallmatrix}
        \widetilde{b}^*b^{(1)}_{1}\\
        b^{(1)'}_{1}
    \end{smallmatrix}
    \right)
    (S_{i(1)})
    +
    W(S_{1},\dots,S_{d})
    \right\|_{\tau}
    \]
    with $\widetilde{c}$ and 
    \[
    \left\|
    a\cdot
    U_{1}^{\eta_{i(1)}}
    \left(
    \begin{smallmatrix}
        \widetilde{c}^*b^{(1)}_{1}\\
        b^{(1)'}_{1}
    \end{smallmatrix}
    \right)
    (S_{i(1)})
    +
    Z(S_{1},\dots,S_{d})
    \right\|_{\tau},
    \]
    respectively.

    (\textbf{III}) The proof in the case of $n(1)=2$ is similar to that of the case when $n(1)\geq3$, since
    \[
    U_{2}^{\eta_{i(1)}}
    \left(
    \begin{smallmatrix}
        b^{(1)}_{1}\\
        b^{(1)'}_{1}
    \end{smallmatrix}
    ;
    \begin{smallmatrix}
        b^{(1)}_{2}\\
        b^{(1)'}_{2}
    \end{smallmatrix}
    \right)
    (X_{i(1)})
    =
    U_{1}^{\eta_{i(1)}}
    \left(
    \begin{smallmatrix}
        b^{(1)}_{1}\\
        b^{(1)'}_{1}
    \end{smallmatrix}
    \right)
    (X_{i(1)})
    U_{1}^{\eta_{i(1)}}
    \left(
    \begin{smallmatrix}
        b^{(1)}_{2}\\
        b^{(1)'}_{2}
    \end{smallmatrix}
    \right)
    (X_{i(1)})
    -b^{(1)}_{1}\eta\left(b^{(1)'}_{1}b^{(1)}_{2}\right)b^{(1)'}_{1}
    \]
    and
    \begin{align*}
        \,&
        U_{2}^{\eta_{i(1)}}
        \left(
        \begin{smallmatrix}
            b^{(1)}_{1}\\
            b^{(1)'}_{1}
        \end{smallmatrix}
        ;\cdots;
        \begin{smallmatrix}
            b^{(1)}_{n(1)}\\
            b^{(1)'}_{n(1)}
        \end{smallmatrix}
        \right)
        (S_{i(1)})
        U_{n(2)}^{\eta_{i(2)}}
        \left(
        \begin{smallmatrix}
            b^{(2)}_{1}\\
            b^{(2)'}_{1}
        \end{smallmatrix}
        ;\cdots;
        \begin{smallmatrix}
            b^{(2)}_{n(1)}\\
            b^{(2)'}_{n(1)}
        \end{smallmatrix}
        \right)
        (S_{i(2)})\cdots\\
        &\hspace{8cm}
        \cdots
        U_{n(k)}^{\eta_{i(k)}}
        \left(
        \begin{smallmatrix}
            b^{(k)}_{1}\\
            b^{(k)'}_{1}
        \end{smallmatrix}
        ;\cdots;
        \begin{smallmatrix}
            b^{(k)}_{n(k)}\\
            b^{(k)'}_{n(k)}
        \end{smallmatrix}
        \right)
        (S_{i(k)})\\
        &=
        U_{1}^{\eta_{i(1)}}
        \left(
        \begin{smallmatrix}
            b^{(1)}_{1}\\
            b^{(1)'}_{1}
        \end{smallmatrix}
        \right)
        (X_{i(1)})
        U_{1}^{\eta_{i(1)}}
        \left(
        \begin{smallmatrix}
            b^{(1)}_{2}\\
            b^{(1)'}_{2}
        \end{smallmatrix}
        \right)
        (X_{i(1)})
        U_{n(2)}^{\eta_{i(2)}}
        \left(
        \begin{smallmatrix}
            b^{(2)}_{1}\\
            b^{(2)'}_{1}
        \end{smallmatrix}
        ;\cdots;
        \begin{smallmatrix}
            b^{(2)}_{n(1)}\\
            b^{(2)'}_{n(1)}
        \end{smallmatrix}
        \right)
        (S_{i(2)})\cdots\\
        &\hspace{8cm}
        \cdots
        U_{n(k)}^{\eta_{i(k)}}
        \left(
        \begin{smallmatrix}
            b^{(k)}_{1}\\
            b^{(k)'}_{1}
        \end{smallmatrix}
        ;\cdots;
        \begin{smallmatrix}
            b^{(k)}_{n(k)}\\
            b^{(k)'}_{n(k)}
        \end{smallmatrix}
        \right)
        (S_{i(k)})\\
        &-b^{(1)}_{1}\eta\left(b^{(1)'}_{1}b^{(1)}_{2}\right)b^{(1)'}_{1}
        U_{n(2)}^{\eta_{i(2)}}
        \left(
        \begin{smallmatrix}
            b^{(2)}_{1}\\
            b^{(2)'}_{1}
        \end{smallmatrix}
        ;\cdots;
        \begin{smallmatrix}
            b^{(2)}_{n(1)}\\
            b^{(2)'}_{n(1)}
        \end{smallmatrix}
        \right)
        (S_{i(2)})
        \cdots
        U_{n(k)}^{\eta_{i(k)}}
        \left(
        \begin{smallmatrix}
            b^{(k)}_{1}\\
            b^{(k)'}_{1}
        \end{smallmatrix}
        ;\cdots;
        \begin{smallmatrix}
            b^{(k)}_{n(k)}\\
            b^{(k)'}_{n(k)}
        \end{smallmatrix}
        \right)
        (S_{i(k)}).
    \end{align*}
    Therefore, the proof is completed in all the cases. 
    
    So far, we have proved (1)$\Leftrightarrow$(2) for any $B$-free $B$-valued semi-circular system with respect to $\eta_{1},\dots,\eta_{d}$ such that $\tau(\eta_{j}(b)b')=\tau(b\eta_{j}(b'))$ for any $b,b'\in B$. 

    (3)$\Rightarrow$(2): Trivial.

    (2)$\Rightarrow$(3): By Proposition \ref{prop_chebyshevorthog_formla}, for any $P\in B_{\langle d\rangle}$ there exist $N\in\mathbb{N}$ and $\widetilde{P}\in \mathcal{T}_{\eta\otimes\mathrm{id}_{N}}(M_{N}(B))_{\langle d\rangle}$ such that 
    $\left[\begin{smallmatrix}
        P(X_{1},\dots,X_{d})&\\
        &\mbox{\Large 0}_{N-1}
    \end{smallmatrix}\right]
    =\mathrm{ev}_{X\otimes I_{N}}(\widetilde{P})$ and $\left\|\mathrm{ev}_{S\otimes I_{N}}^{\otimes2}(\partial_{Y_{j}}\widetilde{P})\right\|_{\eta_{j}\otimes\mathrm{id}_{N}}=\frac{1}{N}\left\|\mathrm{ev}_{S}^{\otimes2}(\partial_{j}P)\right\|^2_{\eta_{j}}$.
    Then, applying the equivalence between (1) and (2) to the $M_{N}(B)$-free $M_{N}(B)$-valued semi-circular system $S\otimes I_{N}:=(S_{1}\otimes I_{N},\dots,S_{d}\otimes I_{N})$ with respect to $\eta\otimes\mathrm{id}_{N}=(\eta_{1}\otimes \mathrm{id}_{N},\dots,\eta_{d}\otimes\mathrm{id}_{N})$ (see Remark \ref{rem_bncpspre}), we observe that
    \begin{align*}
        \,&
        \frac{1}{k}
        \left\|
        \begin{bmatrix}
            P(S_{1},\dots,S_{d})-E[P(S_{1},\dots,S_{d})]
        \end{bmatrix}
        \right\|_{\tau}^2\\
        &
        =\left\|
        \begin{bmatrix}
            P(S_{1},\dots,S_{d})-E[P(S_{1},\dots,S_{d})]&\\
            &\mbox{\Large 0}_{N-1}
        \end{bmatrix}
        \right\|_{\tau\otimes\mathrm{tr}_{N}}^2\\
        &=\left\|\widetilde{P}(S_{1}\otimes I_{N},\dots,S_{d}\otimes I_{N})-(E\otimes \mathrm{id}_{N})[\widetilde{P}(S_{1}\otimes I_{N},\dots,S_{d}\otimes I_{N})]\right\|_{\tau\otimes\mathrm{tr}_{N}}^2\\
        &
        \leq\sum_{j=1}^{d}
        \left\|
        \mathrm{ev}_{S\otimes I_{N}}^{\otimes2}(\partial_{Y_{j}}\widetilde{P}(Y_{1},\dots,Y_{d}))
        \right\|_{\eta_{j}\otimes\mathrm{id}_{N}}^2,
    \end{align*}
    where the symbol $\|\cdot\|_{\eta_{j}\otimes\mathrm{id}_{N}}$ is the norm with respect to the (pre-)inner product $\langle\cdot,\cdot\rangle_{\eta_{j}\otimes\mathrm{id}_{N}}$.
    Using Lemma \ref{lem_matampChebyshev1}, we have 
    \[\left\|
    \mathrm{ev}_{S\otimes I_{N}}^{\otimes2}(\partial_{Y_{j}}[\widetilde{P}(Y_{1},\dots,Y_{d})])
    \right\|_{\eta_{j}\otimes\mathrm{id}_{N}}^2
    =\frac{1}{N}
    \|\mathrm{ev}_{S}^{\otimes2}(\partial_{}[P(X_{1},\dots,X_{d})])\|_{\eta_{j}}^2,
    \]
    and hence
    \[
    \left\|
    P(S_{1},\dots,S_{d})-E[P(S_{1},\dots,S_{d})]
    \right\|_{\tau}^2
    \leq
    \sum_{j=1}^{d}
    \|\mathrm{ev}_{S}^{\otimes 2}(\partial_{j}[P(X_{1},\dots,X_{d})])\|_{\eta_{j}}^2.
    \]
    Thus, we are done.
\end{proof}

\begin{Rem}
    In the case of $B=\mathbb{C}$ and $\eta_{1}(1)=\cdots=\eta_{d}(1)=1$, Theorem \ref{Thm_character_Bsemicircular} is nothing but Biane's theorem \cite[Theorem 5.1]{b03}.
\end{Rem}

\subsection{The kernel of the free difference quotient associated with variance with respect to $B$-valued semi-circular system}
Let $(A,B,\tau,E)$ be a tracial $B$-valued $C^*$-probability space and $S=(S_{1},\dots,S_{d})\in A_{\mathrm{sa}}^d$ a $B$-free $B$-valued semi-circular system with respect to $\eta=(\eta_{1},\dots,\eta_{d})$ (with mean $0$). We set $B_{\langle d|S\rangle}=\mathrm{ev}_{S}(B_{\langle d\rangle})$. 
Define Hilbert spaces
\[
L^2(B_{\langle d|S\rangle},\tau)=\overline{B_{\langle d|S\rangle}}^{\langle\cdot,\cdot\rangle_{\tau}},\quad
L^2(B_{\langle d|S\rangle}^{\otimes2},\tau,\eta_{j})=\overline{\iota_{\eta_{j}}\left(B_{\langle d|S\rangle}\otimes B_{\langle d|S\rangle}\right)}^{\langle\cdot,\cdot\rangle_{\eta_{j}}},
\]
where $\iota_{\eta_{j}}$ denotes the quotient map $B_{\langle d|S\rangle}\otimes B_{\langle d|S\rangle}\to\quotient{B_{\langle d|S}\otimes B_{\langle d|S\rangle}}{\mathcal{N}}$ with $\mathcal{N}=\{\Xi\in B_{\langle d|S\rangle}\otimes B_{\langle d|S\rangle}\,|\,\langle\Xi,\Xi\rangle_{\eta_{j}}=0\}$, and
\[
 L^2(B_{\langle d|S\rangle}^{\otimes2},\tau,\eta)=\bigoplus_{j=1}^{d}L^2(B_{\langle d|S\rangle}^{\otimes2},\tau,\eta_{j})\mbox{ with inner product }
 \left\langle
 \left[
 \begin{smallmatrix}
     \Xi_{1}\\
     \vdots\\
     \Xi_{d}
 \end{smallmatrix}\right],
 \left[
 \begin{smallmatrix}
     \Xi'_{1}\\
     \vdots\\
     \Xi'_{d}
 \end{smallmatrix}
 \right]
 \right\rangle_{\eta}
 =\sum_{j=1}^{d}\langle\Xi_{j},\Xi_{j}'\rangle_{\eta_{j}}
\]
for any $\Xi_{j},\Xi_{j}'\in L^2(B_{\langle d|S\rangle}^{\otimes2},\tau,\eta_{j})$.
Similarly to \cite[Remark 2.3]{l24}, we can see the next lemma.
\begin{Lem}
    In the same notation as above, we have the following:
    \begin{enumerate}
        \item $\ker(\mathrm{ev}_{S})\subset\ker(\iota_{\eta_{j}}\circ\mathrm{ev}_{S}^{\otimes2}\circ\partial_{j})$ for any $j\in[d]$.
        \item For any $j\in[d]$, there is a densely defined unbounded operator
        \[\partial_{S_{j},\eta_{j}}:L^2(B_{\langle d|S\rangle},\tau)\to L^2(B_{\langle d|S\rangle}^{\otimes2},\tau,\eta_{j})\] with domain $B_{\langle d|S\rangle}\simeq\quotient{B_{\langle d\rangle}}{\ker(\mathrm{ev}_{S})}$ such that
        \begin{equation*}
        \xymatrix{
        &B_{\langle d\rangle}\ar[ld]_-{\mathrm{ev}_{S}}\ar[d]_-{\pi}\ar[rd]^-{\iota_{\eta_{j}}\circ\mathrm{ev}_{S}^{\otimes2}\circ\partial_{j}}\ar@{}@<2.0ex>[ld]|{\circlearrowright}&\\
        B_{\langle d|S\rangle}\ar[r]_-{\sim}&\quotient{B_{\langle d\rangle}}{\ker(\mathrm{ev}_{S})}\ar[r]_{\quad\partial_{S_{j},\eta_{j}}}&\iota_{\eta_{j}}(B_{\langle d|S\rangle}^{\otimes2}),\ar@{}@<2.0ex>[lu]|{\circlearrowright}
        }
        \end{equation*}
        where $\pi:B_{\langle d\rangle}\to\quotient{B_{\langle d\rangle}}{\ker(\mathrm{ev}_{S})}$ is the quotient map.
    \end{enumerate}
\end{Lem}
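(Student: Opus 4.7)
The strategy is simple: part (2) follows immediately from part (1) by the universal property of the quotient, so the content of the lemma reduces to proving the inclusion $\ker(\mathrm{ev}_{S})\subset\ker(\iota_{\eta_{j}}\circ\mathrm{ev}_{S}^{\otimes2}\circ\partial_{j})$. The key tool will be the integration-by-parts identity established in Corollary \ref{Cor_integralbyparts}. Its hypothesis $\tau(\eta_{j}(b)b')=\tau(b\eta_{j}(b'))$ holds automatically in the present setting: by the $B$-valued semi-circular moment formula applied to the pair partition $\{\{1,2\}\}$, we have $\eta_{j}(b)=E[S_{j}bS_{j}]$, and then $\tau(\eta_{j}(b)b')=\tau(S_{j}bS_{j}b')=\tau(bS_{j}b'S_{j})=\tau(b\eta_{j}(b'))$ by the $\tau$-preserving property of $E$ together with the traciality of $\tau$.

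For part (1), the plan is to take an arbitrary $P\in B_{\langle d\rangle}$ with $\mathrm{ev}_{S}(P)=0$ and apply Corollary \ref{Cor_integralbyparts} with $\xi=P$ and $\Xi=\partial_{j}[P]$. The left-hand side of the identity, $\langle\mathrm{ev}_{S}(\partial_{j}^{*}[\partial_{j}[P]]),\mathrm{ev}_{S}(P)\rangle_{\tau}$, vanishes trivially because $\mathrm{ev}_{S}(P)=0$, while the right-hand side is exactly $\|\mathrm{ev}_{S}^{\otimes2}(\partial_{j}[P])\|_{\eta_{j}}^{2}$. Hence $\mathrm{ev}_{S}^{\otimes2}(\partial_{j}[P])$ is a null vector for the pre-inner product $\langle\cdot,\cdot\rangle_{\eta_{j}}$, which by definition of $\iota_{\eta_{j}}$ means precisely $\iota_{\eta_{j}}(\mathrm{ev}_{S}^{\otimes2}(\partial_{j}[P]))=0$, as required.

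For part (2), the inclusion just proved means that the composition $\iota_{\eta_{j}}\circ\mathrm{ev}_{S}^{\otimes2}\circ\partial_{j}:B_{\langle d\rangle}\to\iota_{\eta_{j}}(B_{\langle d|S\rangle}^{\otimes2})$ descends through the quotient $\pi:B_{\langle d\rangle}\to B_{\langle d\rangle}/\ker(\mathrm{ev}_{S})\simeq B_{\langle d|S\rangle}$, yielding a unique linear map $\partial_{S_{j},\eta_{j}}:B_{\langle d|S\rangle}\to\iota_{\eta_{j}}(B_{\langle d|S\rangle}^{\otimes2})$ making the stated diagram commute. Viewing this as an unbounded operator between the $L^{2}$-completions, its domain $B_{\langle d|S\rangle}$ is dense by the very construction of $L^{2}(B_{\langle d|S\rangle},\tau)$ as the $\|\cdot\|_{\tau}$-closure of $B_{\langle d|S\rangle}$, and similarly its range lies in the dense subspace $\iota_{\eta_{j}}(B_{\langle d|S\rangle}^{\otimes2})\subset L^{2}(B_{\langle d|S\rangle}^{\otimes2},\tau,\eta_{j})$.

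There is essentially no hard step here: once the integration-by-parts identity is in hand, the lemma is mechanical. The only point worth watching is the tracial compatibility between $\eta_{j}$ and $\tau$ needed to invoke Corollary \ref{Cor_integralbyparts}, which as explained above is automatic from the semi-circular moment structure and the traciality of $\tau$, so no extra assumption is needed in the present subsection.
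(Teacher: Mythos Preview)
Your proof is correct. The paper does not actually spell out its own argument for this lemma, deferring instead to \cite[Remark 2.3]{l24}; your approach via Corollary \ref{Cor_integralbyparts} with $\Xi=\partial_{j}[P]$ and $\xi=P$ is the natural self-contained route and matches what one would expect. Your observation that the tracial symmetry $\tau(\eta_{j}(b)b')=\tau(b\eta_{j}(b'))$ is automatic here (from $\eta_{j}(b)=E[S_{j}bS_{j}]$, the traciality of $\tau$, and $\tau\circ E=\tau$) is a useful remark, since subsection 6.3 restarts the setup without explicitly imposing that hypothesis.
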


Using Corollary \ref{Cor_integralbyparts}, it is easy to see the following:
\begin{Cor}
    The adjoint $\partial_{S_{j},\eta_{j}}^*$ of $\partial_{S_{j},\eta_{j}}$ is given by $\partial^*_{S_{j},\eta_{j}}(\mathrm{ev}_{S}(\Xi))=\mathrm{ev}_{S}(\partial_{j}^*\Xi)$ for any $\Xi\in B_{\langle d\rangle}^{\otimes2}$, and $\iota_{\eta_{j}}(B_{\langle d|S\rangle}^{\otimes2})\subset\mathrm{dom}(\partial_{S_{j},\eta_{j}}^*)$. Hence, $\partial_{S_{j},\eta_{j}}$ is closable.
\end{Cor}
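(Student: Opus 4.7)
The plan is to deduce the corollary entirely from the integration-by-parts identity of Corollary \ref{Cor_integralbyparts}, which already does essentially all the work; the only preliminary step is a well-definedness check. Throughout, I read ``$\partial_{S_{j},\eta_{j}}^*(\mathrm{ev}_{S}(\Xi))$'' as $\partial_{S_{j},\eta_{j}}^*\bigl(\iota_{\eta_{j}}(\mathrm{ev}_{S}^{\otimes 2}(\Xi))\bigr)$, since elements of $\iota_{\eta_{j}}(B_{\langle d|S\rangle}^{\otimes 2})$ are by definition of this form for some $\Xi \in B_{\langle d\rangle}^{\otimes 2}$.

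First, I would show that the formula $\iota_{\eta_{j}}(\mathrm{ev}_{S}^{\otimes 2}(\Xi)) \mapsto \mathrm{ev}_{S}(\partial_{j}^*\Xi) \in L^2(B_{\langle d|S\rangle},\tau)$ is well defined. Suppose $\iota_{\eta_{j}}(\mathrm{ev}_{S}^{\otimes 2}(\Xi)) = 0$, i.e.\ $\|\mathrm{ev}_{S}^{\otimes 2}(\Xi)\|_{\eta_{j}} = 0$. By Cauchy--Schwarz for the pre-inner product $\langle\cdot,\cdot\rangle_{\eta_{j}}$, this forces $\langle \mathrm{ev}_{S}^{\otimes 2}(\Xi),\, \mathrm{ev}_{S}^{\otimes 2}(\partial_{j}[\xi])\rangle_{\eta_{j}} = 0$ for every $\xi \in B_{\langle d\rangle}$. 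Applying Corollary \ref{Cor_integralbyparts} then yields
\[
\langle \mathrm{ev}_{S}(\partial_{j}^*\Xi),\, \mathrm{ev}_{S}(\xi)\rangle_{\tau} = 0 \qquad\text{for every } \xi \in B_{\langle d\rangle}.
\]
Since $B_{\langle d|S\rangle} = \mathrm{ev}_{S}(B_{\langle d\rangle})$ is dense in $L^2(B_{\langle d|S\rangle},\tau)$, this forces $\mathrm{ev}_{S}(\partial_{j}^*\Xi) = 0$, giving well-definedness.

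Second, I would verify the adjoint identity. For $\Xi \in B_{\langle d\rangle}^{\otimes 2}$ and $\xi \in B_{\langle d\rangle}$, conjugating Corollary \ref{Cor_integralbyparts} gives
\[
\bigl\langle \partial_{S_{j},\eta_{j}}(\mathrm{ev}_{S}(\xi)),\, \iota_{\eta_{j}}(\mathrm{ev}_{S}^{\otimes 2}(\Xi))\bigr\rangle_{\eta_{j}}
=
\bigl\langle \mathrm{ev}_{S}(\xi),\, \mathrm{ev}_{S}(\partial_{j}^*\Xi)\bigr\rangle_{\tau},
\]
which is precisely the defining identity $\langle T\eta, \zeta\rangle = \langle \eta, T^*\zeta\rangle$ with $T = \partial_{S_{j},\eta_{j}}$, $\eta = \mathrm{ev}_{S}(\xi)$, $\zeta = \iota_{\eta_{j}}(\mathrm{ev}_{S}^{\otimes 2}(\Xi))$, and $T^*\zeta = \mathrm{ev}_{S}(\partial_{j}^*\Xi)$. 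This shows $\iota_{\eta_{j}}(\mathrm{ev}_{S}^{\otimes 2}(\Xi)) \in \mathrm{dom}(\partial_{S_{j},\eta_{j}}^*)$ and that the adjoint acts there by the claimed formula.

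Finally, for closability, observe that $\iota_{\eta_{j}}(B_{\langle d|S\rangle}^{\otimes 2})$ is by construction dense in $L^2(B_{\langle d|S\rangle}^{\otimes 2},\tau,\eta_{j})$, so the inclusion $\iota_{\eta_{j}}(B_{\langle d|S\rangle}^{\otimes 2}) \subset \mathrm{dom}(\partial_{S_{j},\eta_{j}}^*)$ established above shows $\partial_{S_{j},\eta_{j}}^*$ is densely defined. By the standard von Neumann criterion, a densely defined operator whose adjoint is densely defined is closable, so $\partial_{S_{j},\eta_{j}}$ is closable. There is no real obstacle here: the computational heart of the result has already been done in Corollary \ref{Cor_integralbyparts}, and what remains is purely formal Hilbert-space manipulation, with the only subtle point being the well-definedness modulo the null space $\mathcal{N}$ of $\langle\cdot,\cdot\rangle_{\eta_{j}}$.
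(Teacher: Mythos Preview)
Your proof is correct and takes essentially the same approach as the paper, which simply says ``Using Corollary~\ref{Cor_integralbyparts}, it is easy to see the following'' without spelling out the details. You have correctly identified that Corollary~\ref{Cor_integralbyparts} does all the real work, and your explicit well-definedness check (modulo the null space of $\langle\cdot,\cdot\rangle_{\eta_j}$) and the standard von Neumann closability criterion fill in exactly the formal steps the paper leaves to the reader.
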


We denote by $\overline{\partial}_{S_{j},\eta_{j}}$ the closure of $\partial_{S_{j},\eta_{j}}:L^{2}(B_{\langle d|S\rangle},\tau)$ and $L^2(B_{\langle d|S\rangle}^{\otimes2},\tau,\eta_{j})$. We also set a closable operator $\partial_{S,\eta}:L^2(B_{\langle d|S\rangle,\tau})\to L^2(B_{\langle d|S\rangle},\tau,\eta)$, whose closure is denoted by $\overline{\partial}_{S,\eta}$, with domain $B_{\langle d|S\rangle}$ such that
$\partial_{S,\eta}\xi=\left(\partial_{S_{1},\eta_{1}}\xi,\dots,\partial_{S_{d},\eta_{d}}\xi\right)$ for any $\xi\in B_{\langle d|S\rangle}$. 
By Theorem \ref{Thm_character_Bsemicircular}(3), we have the following:

\begin{Cor}\label{cor_kernel}
    We have
    \[
    \|\xi-E[\xi]\|_{\tau}^2\leq\sum_{j=1}^{d}\|\overline{\partial}_{S_{j},\eta_{j}}\xi\|_{\eta_{j}}^2=\|\overline{\partial}_{S,\eta}\xi\|_{\eta}^2
    \]
    for any $\xi\in\mathrm{dom}(\overline{\partial}_{S,\eta})$, and hence $\ker(\overline{\partial}_{S,\eta})=L^2(B,\tau)$.
\end{Cor}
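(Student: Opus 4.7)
The plan is to derive the stated Poincar\'{e} inequality for all $\xi \in \mathrm{dom}(\overline{\partial}_{S,\eta})$ by a straightforward closure/continuity argument starting from the polynomial inequality of Theorem~\ref{Thm_character_Bsemicircular}(3), and then to read the kernel identification off this extended inequality.

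First, I would reinterpret Theorem~\ref{Thm_character_Bsemicircular}(3) purely on the level of the operator $\partial_{S,\eta}$. For any $P \in B_{\langle d\rangle}$ with $\xi = \mathrm{ev}_{S}(P) \in B_{\langle d|S\rangle}$, the definition of $\partial_{S_j,\eta_j}$ gives $\partial_{S_j,\eta_j}\xi = \iota_{\eta_j}(\mathrm{ev}_{S}^{\otimes 2}(\partial_j P))$, and the $L^2$-inner product on the quotient coincides with $\langle\cdot,\cdot\rangle_{\eta_j}$; thus (3) reads
\[
\|\xi - E[\xi]\|_{\tau}^2 \leq \sum_{j=1}^{d}\|\partial_{S_j,\eta_j}\xi\|_{\eta_j}^2 = \|\partial_{S,\eta}\xi\|_{\eta}^2, \qquad \xi \in B_{\langle d|S\rangle}.
\]

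Next, I would extend this to $\mathrm{dom}(\overline{\partial}_{S,\eta})$. Fix $\xi$ in the domain and choose a sequence $(\xi_n) \subset B_{\langle d|S\rangle}$ with $\xi_n \to \xi$ in $L^2(B_{\langle d|S\rangle},\tau)$ and $\partial_{S,\eta}\xi_n \to \overline{\partial}_{S,\eta}\xi$ in $L^2(B_{\langle d|S\rangle}^{\otimes 2},\tau,\eta)$. Since $\tau \circ E = \tau$, the conditional expectation extends to a contractive projection $L^2(A,\tau) \to L^2(B,\tau)$, so $E[\xi_n] \to E[\xi]$ in $\|\cdot\|_\tau$. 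Applying the polynomial inequality to each $\xi_n$ and passing to the limit yields
\[
\|\xi - E[\xi]\|_{\tau}^2 \leq \|\overline{\partial}_{S,\eta}\xi\|_{\eta}^2
\]
as desired.

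Finally, I would deduce the kernel identification. For the inclusion $L^2(B,\tau) \subseteq \ker(\overline{\partial}_{S,\eta})$, note that every $b \in B$ lies in $B_{\langle d|S\rangle}$ with $\partial_{S,\eta}b = 0$ (because $B \subset \ker(\partial_j)$ for each $j$); approximating any $\xi \in L^2(B,\tau)$ by a sequence $(b_n)\subset B$ in $L^2$-norm gives a sequence in $\mathrm{dom}(\partial_{S,\eta})$ whose images under $\partial_{S,\eta}$ are all zero, hence $\xi \in \mathrm{dom}(\overline{\partial}_{S,\eta})$ with $\overline{\partial}_{S,\eta}\xi = 0$. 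Conversely, if $\xi \in \ker(\overline{\partial}_{S,\eta})$, the Poincar\'{e} inequality just established forces $\xi = E[\xi] \in L^2(B,\tau)$.

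The only (minor) substantive point is the $L^2$-continuity of $E$, which is immediate from $\tau$-invariance; the rest is bookkeeping around the definition of the closure, so I do not anticipate any real obstacle.
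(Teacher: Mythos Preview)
Your proof is correct and is exactly the routine closure argument the paper has in mind: the paper gives no explicit proof beyond ``By Theorem~\ref{Thm_character_Bsemicircular}(3), we have the following,'' so you have simply supplied the standard details (reformulating (3) in terms of $\partial_{S,\eta}$, passing to limits using $L^2$-continuity of $E$, and reading off both kernel inclusions). Nothing is missing.
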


\begin{Rem}
    Let $\tau$ be a faithful tracial state. Consider the case when $d=1$ and $\eta(b)=\eta_{1}(b)=\tau(b)1$ for any $b\in B$, that is, $B$ and $S=S_{1}$ are $\mathbb{C}$-free with respect to $\tau$ (see \cite[Corollary 2.5]{sh98}). In this case, since $\|\cdot\|_{\eta}^2=\|\cdot\|^2_{\tau\otimes\tau}$, we have 
    \[
    \|\xi-E[\xi]\|_{\tau}^2\leq\|\overline{\partial}_{S,\tau}\xi\|^2_{\tau\otimes\tau}
    \]
    for any $\xi\in\mathrm{dom}(\overline{\partial}_{S,\tau})$, and hence $\ker(\overline{\partial}_{S,\tau})=L^2(B,\tau)$. This gives an affirmative answer to Voiculescu's conjecture \cite{aim06} in a very particular case. Probably, this is a folklore, but we could not find any literature asserting even this.
\end{Rem}

\section{A counterexample to Voiculescu's conjecture on $B$-valued free Poincar\'{e} inequality}\label{section_counterexample}
\subsection{Counterexample}
In this section, we will give a counterexample to Voiculescu's conjecture on $B$-valued free Poincar\'{e} inequality (see \cite[section 0.1]{aim06}).

Let $A\supset B$ be a unital inclusion of unital $C^*$-algebras (resp. $W^*$-algebras) and $(A,B,\tau,E)$ be a tracial $B$-valued $C^*$-probability space (resp. $W^*$-probability space). Assume that there exists a family $(e_{m})_{m=1}^{\infty}$ of projections in $B$ such that $e_{n}e_{m}=\delta_{n,m}e_{n}$ and $\tau\left(e_{n}\right)=\frac{6}{\pi^2}\frac{1}{n^2}$ for each $n\in\mathbb{N}$. (This happens if, for example, $B$ is diffuse, or $B=c_{0}(\mathbb{N})^{\sim}$ with a faithful tracial state $\tau\left(x\right)=\frac{6}{\pi^2}\sum_{n=1}^{\infty}\frac{x_{n}}{n^2}$, $x=(x_{n})_{n=1}^{\infty}\in c_{0}(\mathbb{N})^{\sim}$.) Let also $a$ be a self-adjoint $B$-valued random variable with mean $0$ and variance $\eta:B\to B$ such that $\eta(b)=b$, $b\in B$.

The next lemma is probably well known, but we do give it explicitly for the sake of completeness.

\begin{Lem}\label{Lem_commu}
    We have $ab=ba$ for any $b\in B$.
\end{Lem}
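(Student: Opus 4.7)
The plan is to show that $\|ab-ba\|_\tau^2 = 0$ and then invoke faithfulness of $\tau$. Expanding,
\[
(ab-ba)^*(ab-ba) = b^*a^2b - b^*aba - ab^*ab + ab^*ba.
\]
Each of the four traces can be evaluated using the three tools available: (i) the $\tau$-preserving conditional expectation $E$ satisfies $\tau(x)=\tau(E[x])$; (ii) $E$ is a $B$-bimodule map with $E[aba']=\eta(a)=a$ for $a \in B$ (here using the hypothesis $\eta=\mathrm{id}_B$, and in particular $E[a^2]=\eta(1)=1$); (iii) $\tau$ is tracial.

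Concretely, I would compute
\[
\tau(b^*a^2b) = \tau(b^*E[a^2]b) = \tau(b^*b), \qquad
\tau(b^*aba) = \tau(b^*E[aba]) = \tau(b^*\eta(b)) = \tau(b^*b),
\]
and, using the trace property to cycle $a$ first,
\[
\tau(ab^*ab) = \tau(bab^*a) = \tau(b\,E[ab^*a]) = \tau(b\eta(b^*)) = \tau(bb^*),
\]
\[
\tau(ab^*ba) = \tau(b^*b\,a^2) = \tau(b^*b\,E[a^2]) = \tau(b^*b).
\]
Substituting and using $\tau(bb^*)=\tau(b^*b)$ one last time, the four terms cancel in pairs and give $\|ab-ba\|_\tau^2 = 0$. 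By faithfulness of $\tau$, $ab=ba$.

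There is essentially no obstacle: the argument is a short calculation using only the variance condition $\eta=\mathrm{id}_B$, the bimodule property of $E$, and traciality. The only point to watch is that in the third term one must cycle $a$ to the front before applying $E$, since $ab^*ab$ is not of the form $(\,\cdot\,)E[\,\cdot\,](\,\cdot\,)$ directly; once cycled, the bimodule property handles it.
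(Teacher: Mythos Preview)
Your proof is correct and follows essentially the same route as the paper's: expand $\|ab-ba\|_\tau^2$, evaluate each of the four terms using $\tau=\tau\circ E$, the bimodule property of $E$, and the variance hypothesis $E[ab'a]=\eta(b')=b'$, then conclude by faithfulness. The only cosmetic difference is that for the third and fourth terms you cycle with the trace before applying $E$, whereas the paper uses the bimodule identity $E[xc]=E[x]c$ directly; the computations are otherwise identical (and your notational slip in item~(ii), reusing $a$ for an element of $B$, is harmless).
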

\begin{proof}
    Take an arbitrary element $b\in B$. We observe that
    \begin{align*}
        \tau\left((ab-ba)^*(ab-ba)\right)
        &=
        \tau\left(b^*aab-b^*aba-ab^*ab+ab^*ba\right)\\
        &=
        \tau\left(b^*E[a1a]b-b^*E[aba]-E[ab^*a]b^*+E[ab^*ba]\right)\\
        &=
        \tau
        \left(
        b^*\eta(1)b
        -b^*\eta(b)
        -\eta(b^*)b
        +\eta(b^*b)
        \right)\\
        &=
        \tau
        \left(
        b^*b
        -b^*b
        -b^*b
        +b^*b
        \right)=0,
    \end{align*}
    where the $\tau$-preserving property of $E$ was used in the second equality.
    By the faithfulness of $\tau$, we have $ab=ba$ for any $b\in B$.
\end{proof}

Let $P_{n}(X)$ be the element of the algebraic free product $B\langle X\rangle$ of $B$ and $\mathbb{C}\langle X\rangle$ defined by
\[
P_{n}(X)
=e_{1}Xe_{1}+2e_{2}Xe_{2}+\cdots+ne_{n}Xe_{n}.
\]
It is clear that $E[\mathrm{ev}_{a}(P_{n}(X))]=0$ by the assumption of $E[a]=0$. Also, we observe that
\begin{align*}
    \,&\left\|\mathrm{ev}_{a}\left(P_{n}(X)\right)\right\|_{\tau}^2
    =
    \tau\left(
    P_{n}(a)^*P_{n}(a)
    \right)\\
    &=
    \sum_{j,k=1}^{n}\tau\left(je_{j}ae_{j}\cdot ke_{k}ae_{k}\right)
    =
    \sum_{k=1}^{n}k^2
    \tau
    \left(
    e_{k}E\left[a^2\right]e_{k}
    \right)\\
    &=
    \sum_{k=1}^{n}k^2
    \tau
    \left(
    e_{k}\eta(1)e_{k}
    \right)
    =
    \sum_{k=1}^{n}k^2
    \tau
    \left(
    e_{k}
    \right)=\frac{6}{\pi^2}n\to\infty
\end{align*}
as $n\to\infty$, where Lemma \ref{Lem_commu} (i.e., $e_{k}a=ae_{k}$) was used in the third equality, and that
\begin{align*}
    \,&
    \left\|
    \mathrm{ev}_{a}^{\otimes2}
    \left(
    \partial_{X:B}P_{n}(X)
    \right)
    \right\|^2_{\tau\otimes\tau}
    =
    \left\|
    \sum_{k=1}^{n}
    ke_{k}\otimes e_{k}
    \right\|^2_{\tau\otimes\tau}\\
    &=\sum_{j,k=1}^{n}
    (\tau\otimes\tau)
    \left(
    (je_{j}\otimes e_{j})\cdot (ke_{k}\otimes e_{k})
    \right)
    =\sum_{k=1}^{n}k^2
    (\tau\otimes\tau)
    \left(
    e_{k}\otimes e_{k}
    \right)\\
    &=\sum_{k=1}^{n}k^2\cdot
    \left(
    \frac{6}{\pi^2}
    \right)^2\left(\frac{1}{k^2}\right)^2
    =
    \left(
    \frac{6}{\pi^2}
    \right)^2\sum_{k=1}^{n}\frac{1}{k^2}\leq\frac{6}{\pi^2}.
\end{align*}
Therefore, we are arriving at the following:

\begin{Thm}\label{thm_counterexample}
    Let $(A,B,\tau,E)$ be a tracial $B$-valued $C^*$-probability space (resp. $W^*$-probability space) and $a$ be a self-adjont $B$-valued random variable with mean $0$ and variance $\eta:B\to B$ such that $\eta(b)=b$, $b\in B$. Assume that there exist a family $(e_{m})_{m=1}^{\infty}$ of projections in $B$ such that $e_{n}e_{m}=\delta_{n,m}e_{n}$ and $\tau\left(e_{n}\right)=\frac{6}{\pi^2}\frac{1}{n^2}$ for each $n\in\mathbb{N}$. Then, there is no universal constant $C>0$ such that
    \[
    \left\|
    P(a)-E\left[P(a)\right]
    \right\|_{\tau}
    \leq C
    \left\|
    \mathrm{ev}_{a}^{\otimes2}
    \left(
    \partial_{X:B}
    \left[
    P(X)
    \right]
    \right)
    \right\|_{\tau\otimes\tau}
    \]
    holds for any $P(X)\in B\langle X\rangle$.
\end{Thm}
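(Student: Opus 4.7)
My plan is to exhibit an explicit family $P_{n}(X) \in B\langle X\rangle$, $n \in \mathbb{N}$, such that $\|\mathrm{ev}_{a}(P_{n}(X)) - E[\mathrm{ev}_{a}(P_{n}(X))]\|_{\tau}$ grows to infinity while $\|\mathrm{ev}_{a}^{\otimes 2}(\partial_{X:B}[P_{n}(X)])\|_{\tau\otimes\tau}$ stays bounded. The weighting $\tau(e_{n}) = \frac{6}{\pi^{2}}\frac{1}{n^{2}}$ is chosen precisely so that the sum $\sum_{n \geq 1} \tau(e_{n})$ converges (to $1$, via the Basel identity), which will control the free difference quotient side, while the polynomial coefficients $1, 2, \dots, n$ will blow up the $L^{2}$ side.

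Concretely, I would take $P_{n}(X) := \sum_{k=1}^{n} k\, e_{k} X e_{k}$. First I would observe that because the variance is $\eta = \mathrm{id}_{B}$, we have the key identity $ab = ba$ for every $b \in B$ (this is the content of Lemma \ref{Lem_commu}: expand $\tau((ab-ba)^{*}(ab-ba))$ using the conditional expectation identity $E[abc] = b\eta(c)$ etc., and invoke faithfulness of $\tau$). This commutation is what makes the $e_{k}$'s behave like scalar ``indicators'' against $a$. Using $a e_{k} = e_{k} a$, orthogonality $e_{j}e_{k} = \delta_{j,k}e_{k}$, and the trace property, I would compute
\[
\|\mathrm{ev}_{a}(P_{n}(X))\|_{\tau}^{2} = \sum_{k=1}^{n} k^{2}\, \tau(e_{k} E[a^{2}] e_{k}) = \sum_{k=1}^{n} k^{2}\, \tau(e_{k}) = \frac{6}{\pi^{2}} n,
\]
since $E[a^{2}] = \eta(1) = 1$. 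Note $E[\mathrm{ev}_{a}(P_{n}(X))] = 0$ because $E[a] = 0$, so the centered norm equals this expression, which diverges as $n \to \infty$.

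Next I would compute the derivation side. Since $\partial_{X:B}[e_{k} X e_{k}] = e_{k} \otimes e_{k}$, linearity gives $\partial_{X:B}[P_{n}(X)] = \sum_{k=1}^{n} k\, e_{k} \otimes e_{k}$. Expanding the squared $\tau\otimes\tau$-norm and using $e_{j}e_{k} = \delta_{j,k}e_{k}$ (so only diagonal terms survive),
\[
\|\mathrm{ev}_{a}^{\otimes 2}(\partial_{X:B}[P_{n}(X)])\|_{\tau\otimes\tau}^{2} = \sum_{k=1}^{n} k^{2}\, \tau(e_{k})^{2} = \left(\frac{6}{\pi^{2}}\right)^{2} \sum_{k=1}^{n} \frac{1}{k^{2}} \leq \frac{6}{\pi^{2}}.
\]
Comparing the two: the left-hand side of the hypothetical Poincaré inequality is $\sqrt{(6/\pi^{2}) n}$ while the right-hand side is bounded by $C\sqrt{6/\pi^{2}}$, so for any fixed $C$ the inequality fails once $n > C^{2}$, yielding the claim.

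The main conceptual point, rather than a serious obstacle, is ensuring the existence of the mutually orthogonal projections with the prescribed traces; the hypothesis is stated precisely to guarantee this, and concrete realizations (such as $B = c_{0}(\mathbb{N})^{\sim}$ with $\tau(x) = \frac{6}{\pi^{2}}\sum_{n\geq 1}\frac{x_{n}}{n^{2}}$ and $e_{n} = \delta_{n}$, or any diffuse $B$) are already noted. The only subtle step is justifying the commutation $ab = ba$ of Lemma \ref{Lem_commu}; everything else reduces to the two direct trace computations above, and the asymptotics $n \to \infty$ versus $\sum 1/k^{2} < \infty$ give the desired failure of any uniform bound.
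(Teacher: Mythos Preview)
Your proposal is correct and follows essentially the same approach as the paper: the paper uses exactly the polynomials $P_n(X)=\sum_{k=1}^n k\,e_kXe_k$, invokes Lemma~\ref{Lem_commu} for the commutation $ab=ba$, and carries out the same two trace computations yielding $\tfrac{6}{\pi^2}n$ versus a bound of $\tfrac{6}{\pi^2}$.
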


\subsection{Some remarks}
Let $1\in B\subset A$ be an inclusion of von Neumann algebras with faithful normal tracial state $\tau:A\to\mathbb{C}$ and a $\tau$-preserving (unique) conditional expectation $E:A\to B$. Recall the notion of (algebraic version of) conjugate variable due to Voiculescu \cite[section 3]{v98}. An element $a=a^*\in A$ is said to have a \textit{conjugate variable} $\xi\in L^2(A,\tau)$ if we have $\langle1\otimes1,\mathrm{ev}_{a}^{\otimes2}(\partial_{X:B}P)\rangle_{\tau\otimes\tau}=\langle\xi,\mathrm{ev}_{a}(P)\rangle_{\tau}$ for any $P\in B\langle X\rangle$. 

Similarly to \cite[Remark 2.3]{l24}, we can see the if $a=a^*\in A$ has a conjugate variable (in the above sense), then we have $\ker(\mathrm{ev}_{a})\subset \ker(\mathrm{ev}_{a}^{\otimes2}\circ\partial_{X:B})$.


Related to the setting of Theorem \ref{thm_counterexample}, we have the following:
\begin{Prop}
    Assume that $B\not=\mathbb{C}$. If $a=a^*\in A$ commutes with all elements of $B$, then $\ker(\mathrm{ev}_{a})\not\subset\ker(\mathrm{ev}_{a}^{\otimes2}\circ\partial_{X:B})$, and hence $a$ has no conjugate variable.
\end{Prop}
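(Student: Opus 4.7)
Since $B\neq\mathbb{C}$, I can choose a non-scalar element $b\in B$; replacing $b$ by $b-\tau(b)\cdot 1$ (which is still non-scalar because $1$ is), I may assume $\tau(b)=0$, and then the faithfulness of $\tau$ on $B$ forces $\tau(b^*b)>0$. This normalization is not strictly necessary but makes the final computation cleaner.

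The candidate witness is the polynomial $P(X):=bX-Xb\in B\langle X\rangle$. Because $a$ commutes with every element of $B$, we have $\mathrm{ev}_a(P)=ba-ab=0$, so $P\in\ker(\mathrm{ev}_a)$. On the other hand, the Leibniz rule for $\partial_{X:B}$ together with $B\subset\ker(\partial_{X:B})$ gives $\partial_{X:B}[bX]=b\otimes 1$ and $\partial_{X:B}[Xb]=1\otimes b$, whence
\[
\mathrm{ev}_a^{\otimes 2}\bigl(\partial_{X:B}[P]\bigr)=b\otimes 1-1\otimes b\in A\otimes_{\mathrm{alg}}A.
\]

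The only substantive step is to verify that this element is nonzero. I will do this by computing its squared norm under $\langle\cdot,\cdot\rangle_{\tau\otimes\tau}$:
\[
\|b\otimes 1-1\otimes b\|_{\tau\otimes\tau}^2=2\tau(b^*b)-2|\tau(b)|^2=2\tau(b^*b)>0.
\]
Consequently $P\in\ker(\mathrm{ev}_a)$ but $P\notin\ker(\mathrm{ev}_a^{\otimes 2}\circ\partial_{X:B})$, giving $\ker(\mathrm{ev}_a)\not\subset\ker(\mathrm{ev}_a^{\otimes 2}\circ\partial_{X:B})$. The statement that $a$ has no conjugate variable is then the contrapositive of the remark preceding the proposition, which asserts that a conjugate variable forces the kernel inclusion. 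I do not anticipate a serious obstacle: the essential content is the familiar fact that $b\otimes 1-1\otimes b$ remains nonzero in the algebraic tensor product whenever $b$ is not a scalar, which is exactly what the $L^2$-norm computation above records.
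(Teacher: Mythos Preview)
Your proof is correct and follows essentially the same approach as the paper: both use the witness $P(X)=bX-Xb$ for a non-scalar $b\in B$, observe that $\mathrm{ev}_a(P)=0$ by commutativity, and compute $\partial_{X:B}[P]=b\otimes 1-1\otimes b$. The only difference is that the paper simply asserts that $b\otimes 1\neq 1\otimes b$ for a suitable $b$, whereas you verify this explicitly via the $L^2$-norm computation $\|b\otimes 1-1\otimes b\|_{\tau\otimes\tau}^2=2\tau(b^*b)-2|\tau(b)|^2>0$, which is arguably cleaner.
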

\begin{proof}
    For any $b\in B\setminus\{0\}$, set $P_{b}(X)=bX-Xb$. It is clear that $P_{b}\in\ker(\mathrm{ev}_{a})$ by assumption. On the other hand, we have $\partial_{X:B}P_{b}(X)=b\otimes1-1\otimes b$. In general, $b\otimes1\not=1\otimes b$. (For example, this is the case of $\ker(b)\not=\{0\}$.) Thus, we have $\ker(\mathrm{ev}_{a})\not\subset\ker(\mathrm{ev}_{a}^{\otimes2}\circ\partial_{X:B})$.
\end{proof}

Thus, the following statement should be a precise question concerning Voiculescu's conjecture:
\begin{Que}
    Under the assumption that $a=a^*\in A$ has a conjugate variable, does inequality
    \[
    \|P(a)-E[P(a)]\|_{\tau}^2\leq C\|\mathrm{ev}_{a}^{\otimes2}(\partial_{X:B}P)\|_{\tau\otimes\tau}^{2}
    \]
    hold for any $P\in B\langle X\rangle$ ?
\end{Que}

\section*{Acknowledgements}
The author acknowledges his supervisor, Professor Yoshimichi Ueda, for conversations and editorial support on this paper. The author also would like to thank Dr. Akihiro Miyagawa and Dr. Ryosuke Sato for discussions on divergence operators. Finally, the author would like to express his thanks to Dr. David Jekel for his meaningful comments.

This work was financially supported by JST SPRING, Grant Number JPMJSP2125. The author would like to take this opportunity to thank the
“THERS Make New Standards Program for the Next Generation Researchers”.
}

\end{document}